\pgfplotsset{compat=1.18}
\algnewcommand{\LineComment}[1]{\State \(\triangleright\) #1}
\renewcommand{\Return}[1]{\State{\textbf{return} #1}}
\newenvironment{algo}[2]
{
	\begin{center}
		\captionof{algorithm}{#1}\label{#2}
		\begin{algorithmic}[1]
}
{
		\end{algorithmic}
		\offinterlineskip\vskip2pt\hrulefill
	\end{center}
}
\pgfplotsset{
	DefaultStyle/.style={
		legend cell align={left},
		legend style={font=\scriptsize, fill opacity=1, draw opacity=1, text opacity=1, draw=white!80!black},
		tick align=outside,
		tick pos=left,
		xmajorgrids,
		x grid style={white!69.0196078431373!black},
		xtick style={color=black},
		ymajorgrids,
		y grid style={white!69.0196078431373!black},
		ytick style={color=black}
	}
}
\tikzset{
	cookiesdomain/.pic={
		\draw[viridisBlue, thick] (0,0) -- (1,0);
		\draw[viridisYellow, thick] (1,0) -- (1,1);
		\draw[viridisBlue, thick] (1,1) -- (0,1);
		\draw[matchingRed, thick] (0,1) -- (0,0);
		\node[viridisBlue] at (0.5,-0.05) {$\Gamma_N$};
		\node[viridisBlue] at (0.5,1.05) {$\Gamma_N$};
		\node[viridisYellow] at (1.06,0.5) {$\Gamma_D$};
		\node[matchingRed] at (-0.06,0.5) {$\Gamma_{\mathrm{in}}$};
		\node[draw, circle, fill, black, inner sep=1pt] at (0,0) {};
		\node[draw, circle, fill, black, inner sep=1pt] at (1,0) {};
		\node[draw, circle, fill, black, inner sep=1pt] at (0,1) {};
		\node[draw, circle, fill, black, inner sep=1pt] at (1,1) {};
		\draw[dashed, lightgray] (0.2,0) -- (0.2,1);
		\draw[dashed, lightgray] (0.4,0) -- (0.4,1);
		\draw[dashed, lightgray] (0.6,0) -- (0.6,1);
		\draw[dashed, lightgray] (0.8,0) -- (0.8,1);
		\draw[dashed, lightgray] (0,0.2) -- (1,0.2);
		\draw[dashed, lightgray] (0,0.4) -- (1,0.4);
		\draw[dashed, lightgray] (0,0.6) -- (1,0.6);
		\draw[dashed, lightgray] (0,0.8) -- (1,0.8);
		\draw (0.3,0.3) circle (0.1) node {$\Omega_1$};
		\draw (0.7,0.3) circle (0.1) node {$\Omega_2$};
		\draw (0.7,0.7) circle (0.1) node {$\Omega_3$};
		\draw (0.3,0.7) circle (0.1) node {$\Omega_4$};
		\node at (0.5, 0.5) {$\Omega_0$};
		\draw[black, thick] (-0.02,0.2) -- (0,0.2);
		\draw[black, thick] (-0.02,0.4) -- (0,0.4);
		\draw[black, thick] (-0.02,0.6) -- (0,0.6);
		\draw[black, thick] (-0.02,0.8) -- (0,0.8);
		\draw[black, thick] (0.2,-0.02) -- (0.2,0);
		\draw[black, thick] (0.4,-0.02) -- (0.4,0);
		\draw[black, thick] (0.6,-0.02) -- (0.6,0);
		\draw[black, thick] (0.8,-0.02) -- (0.8,0);
		\node at (-0.075,0.2) {$0.2$};
		\node at (-0.075,0.4) {$0.4$};
		\node at (-0.075,0.6) {$0.6$};
		\node at (-0.075,0.8) {$0.8$};
		\node at (0.2,-0.065) {$0.2$};
		\node at (0.4,-0.065) {$0.4$};
		\node at (0.6,-0.065) {$0.6$};
		\node at (0.8,-0.065) {$0.8$};
		\node at (-0.05,-0.05) {$(0,0)$};
		\node at (1.05,-0.05) {$(1,0)$};
		\node at (1.05,1.05) {$(1,1)$};
		\node at (-0.05,1.05) {$(0,1)$};
		\node at (0.15,1.0875) {\LARGE$\Omega$};
	}
}
\definecolor{viridisYellow}{RGB}{253,231,37}
\definecolor{viridisGreen}{RGB}{94,201,98}
\definecolor{viridisTeal}{RGB}{33,145,140}
\definecolor{viridisBlue}{RGB}{59,82,139}
\definecolor{viridisViolet}{RGB}{68,1,84}
\definecolor{matchingRed}{HTML}{D01F3C}
\definecolor{matchingOrange}{HTML}{FFC077}
\colorlet{colorFOM}{matchingRed}
\colorlet{colorGROM}{viridisBlue}
\colorlet{colorSRGROM}{viridisGreen}
\colorlet{colorGSRROM}{viridisTeal}
\colorlet{colorGCROM}{viridisYellow}
\colorlet{colorDGROM}{viridisViolet}
\colorlet{colorUnused}{viridisViolet}
\renewcommand{\d}[1]{\,\mathrm{d}#1}  % Differential used for integrals
\DeclareMathOperator*{\argmax}{arg\,max}  % Argmax
\DeclareMathOperator*{\argmin}{arg\,min}  % Argmin
\newcommand{\innerProd}[2]{\left\langle #1,#2\right\rangle}  % Inner product
\newcommand{\X}{X}  % State space X
\newcommand{\U}{U}  % Control space U
\newcommand{\Y}{Y}  % Output space Y
\newcommand{\dualX}{\X'}  % Dual space X' of the state space X
\newcommand{\dualU}{\U'}  % Dual space U' of the control space U
\newcommand{\dualY}{\Y'}  % Dual space U' of the control space U
\newcommand{\G}{G}  % Space of time-dependent controls
\renewcommand{\H}{H}  % Space of time-dependent states
\newcommand{\Hpr}{\H_{\mathrm{pr}}}  % Space of time-dependent reduced primal states
\newcommand{\Had}{\H_{\mathrm{ad}}}  % Space of time-dependent reduced adjoint states
\newcommand{\Lin}[2]{\mathcal{L}\left(#1,#2\right)}  % Vector space of linear operators between the two given spaces
\newcommand{\Riesz}[1]{\mathcal{R}_{#1}}  % Riesz map going from a Hilbert space to its dual
\newcommand{\Bil}[2]{\operatorname{Bil}\left(#1,#2\right)}  % Space of bilinear forms on two Hilbert spaces
\newcommand{\R}{\mathbb{R}}  % Real numbers
\newcommand{\N}{\mathbb{N}}  % Natural numbers
\newcommand{\norm}[1]{\left\lVert\,#1\,\right\rVert}  % Norm
\newcommand{\Span}[1]{\operatorname{span}(#1)}  % Span of a set of vectors
\newcommand{\rank}[1]{\operatorname{rank}(#1)}  % Rank of a matrix
\newcommand{\Gramian}[1]{\Lambda(#1)}  % Gramian operator
\newcommand{\RedGramian}[1]{\hat{\Lambda}(#1)}  % Reduced Gramian operator
\newcommand{\params}{\mathcal{P}}  % Parameter set
\newcommand{\paramstrain}{\params_{\mathrm{train}}}  % Training parameter set
\newcommand{\paramstrainsys}{\params_{\mathrm{train,sys}}}  % Training parameter set for reduction of dynamical systems
\newcommand{\paramstrainfta}{\params_{\mathrm{train,fta}}}  % Training parameter set for reduction of final time adjoint states
\newcommand{\paramstest}{\params_{\mathrm{test}}}  % Test parameter set
\newcommand{\paramstraininner}{\params_{\mathrm{inner}}}
\newcommand{\const}{\gamma}  % Constant important for error estimates to ensure reliability
\newcommand{\StateTransSign}{\Psi}  % Sign for state transition operator
\newcommand{\StateTrans}[3]{{\StateTransSign_{#1}(#2,#3)}}  % State transition operator
\newcommand{\RedPrStateTrans}[3]{\hat{\StateTransSign}_{#1}^{\mathrm{pr}}(#2,#3)}  % Reduced state transition operator for primal equation
\newcommand{\RedAdStateTrans}[3]{\hat{\StateTransSign}_{#1}^{\mathrm{ad}}(#2,#3)}  % Reduced state transition operator for adjoint equation
\newcommand{\ResPrim}[3]{R_{#1}^{\mathrm{pr}}\left[#2,#3\right]}  % Residual for the primal equation
\newcommand{\ResAdjo}[2]{R_{#1}^{\mathrm{ad}}\left[#2\right]}  % Residual for the adjoint equation
\newcommand{\EstPrim}[2]{\Delta_{#1}^{\mathrm{pr}}\left[#2\right]}  % Error estimator for the error in the primal state
\newcommand{\EstAdjo}[2]{\Delta_{#1}^{\mathrm{ad}}\left[#2\right]}  % Error estimator for the error in the adjoint state
\newcommand{\EstGram}[2]{\Delta_{#1}^\Lambda\left[#2\right]}  % Error estimator for the error in the Gramian
\newcommand{\ErrPrim}[2]{e^{\mathrm{pr}}\left[#1,#2\right]}  % Error in the primal state
\newcommand{\ErrAdjo}[2]{e^{\mathrm{ad}}\left[#1,#2\right]}  % Error in the adjoint state
\newcommand{\EstFTA}[2]{\eta_{#1}^{#2}}  % Error estimator for reduced final time adjoint (without system reduction)
\newcommand{\RedEstFTA}[2]{\hat{\eta}_{#1}^{#2}}  % Reduced error estimator for fully reduced final time adjoint (not taking into account errors in the dynamical systems)
\newcommand{\FullRedEstFTA}[2]{\eta_{#1}^{#2,\mathrm{red}}}  % Error estimator for fully reduced final time adjoint
\newcommand{\optFTA}[1]{\varphi_{#1}^*(T)}  % Optimal final time adjoint state
\newcommand{\redFTA}[2]{\tilde{\varphi}_{#1}^{#2}}  % Reduced final time adjoint state (without system reduction, however)
\newcommand{\compredFTA}[2]{\hat{\varphi}_{#1}^{#2,\mathrm{red}}}  % Completely reduced final time adjoint state
\newcommand{\card}[1]{\##1}  % Cardinality of a finite set
\newcommand{\normX}[1]{\left\lVert\,#1\,\right\rVert_{\X}}  % Norm on the state space X
\newcommand{\normDualX}[1]{\left\lVert\,#1\,\right\rVert_{\dualX}}  % Norm on the dual space X' of the state space X
\newcommand{\normU}[1]{\left\lVert\,#1\,\right\rVert_{\U}}  % Norm on the space control U
\newcommand{\normY}[1]{\left\lVert\,#1\,\right\rVert_{\Y}}  % Norm on the output space Y
\newcommand{\im}[1]{\operatorname{Im}(#1)}  % Image of the operator
\newcommand{\eye}[1]{\operatorname{id}_{#1}}  % Identity operator/matrix
\newcommand{\Vpr}{V_\mathrm{pr}}  % Projection operator for the primal equation
\newcommand{\Wpr}{W_{\mathrm{pr}}}  % Projection operator for the primal equation
\newcommand{\Vad}{V_{\mathrm{ad}}}  % Projection operator for the adjoint equation
\newcommand{\Wad}{W_{\mathrm{ad}}}  % Projection operator for the adjoint equation
\newcommand{\Vred}{V_{\dimRedSpaceFTA}}  % Projection operator for the final time adjoints
\newcommand{\Projection}[1]{P_{#1}}
\newcommand{\littletaller}{\mathchoice{\vphantom{\big|}}{}{}{}}
\newcommand{\restricted}[2]{{% we make the whole thing an ordinary symbol
		\left.\kern-\nulldelimiterspace % automatically resize the bar with \right
		#1 % the function
		\littletaller % pretend it's a little taller at normal size
		\right|_{#2} % this is the delimiter
}}
\newcommand{\ProjectionMatrix}[1]{D_{#1}}
\newcommand{\spaceVpr}{\mathcal{V}_\mathrm{pr}}  % Projection operator for the primal equation
\newcommand{\spaceWpr}{\mathcal{W}_{\mathrm{pr}}}  % Projection operator for the primal equation
\newcommand{\spaceVad}{\mathcal{V}_{\mathrm{ad}}}  % Projection operator for the adjoint equation
\newcommand{\spaceWad}{\mathcal{W}_{\mathrm{ad}}}  % Projection operator for the adjoint equation
\newcommand{\spaceVred}[1]{\mathcal{X}^{#1}}  % Projection operator for the final time adjoints
\newcommand{\redBasisVred}[1]{\Phi^{#1}}
\newcommand{\redBasisVpr}{\Phi_{\spaceVpr}}
\newcommand{\redBasisWpr}{\Phi_{\spaceWpr}}
\newcommand{\redBasisVad}{\Phi_{\spaceVad}}
\newcommand{\redBasisWad}{\Phi_{\spaceWad}}
\newcommand{\kpr}{k_{\mathrm{pr}}}  % Dimension of the reduced space for projecting the primal system
\newcommand{\kad}{k_{\mathrm{ad}}}  % Dimension of the reduced space for projecting the adjoint system
\newcommand{\dimRedSpaceFTA}{N}  % Dimension of the reduced final time adjoint space
\newcommand{\E}{E}  % E operator/mass operator in the LTV system
\newcommand{\Ead}{E_{\mathrm{ad}}}  % adjoint E operator/mass operator in the LTV system
\newcommand{\A}[1]{A(#1)}  % A operator in the LTV system
\newcommand{\B}[1]{B(#1)}  % B operator in the LTV system
\newcommand{\C}{C}  % C operator in the LTV system
\newcommand{\Mop}{M}  % M operator weighting the state components
\newcommand{\Rop}{R}  % R operator weighting the control energy
\newcommand{\Gmat}{\underline{X}}  % Matrix inducing the inner product on X (usually the mass matrix or something similar)
\newcommand{\EplusMtimesGramian}{\E^*+\Riesz{\X}^{-1}\Mop\Gramian{\mu}}
\newcommand{\EplusMtimesRedGramian}{\E^*+\Riesz{\X}^{-1}\Mop\RedGramian{\mu}}
\newcommand{\MxNaughtMinusxT}{\Riesz{\X}^{-1}\Mop\left(\StateTrans{\mu}{T}{0}x_\mu^0-x_\mu^T\right)}
\newcommand{\RedMxNaughtMinusxT}{\Riesz{\X}^{-1}\Mop\left(\RedPrStateTrans{\mu}{T}{0}\Projection{\spaceVpr}x_\mu^0-x_\mu^T\right)}
\newcommand{\Ham}{\mathcal{H}}  % Hamiltonian function
\newcommand{\optFunc}[1]{\mathcal{J}_{#1}}  % Objective function to minimize in the optimal control problem
\newcommand{\redEpr}{\hat{E}_{\mathrm{pr}}}  % Reduced E operator for primal equation
\newcommand{\redEad}{\hat{E}_{\mathrm{ad}}}  % Reduced E operator for adjoint equation
\newcommand{\redApr}[1]{\hat{A}_{\mathrm{pr}}(#1)}  % Reduced A operator for primal equation
\newcommand{\redBpr}[1]{\hat{B}_{\mathrm{pr}}(#1)}  % Reduced B operator for primal equation
\newcommand{\redAad}[1]{\hat{A}_{\mathrm{ad}}(#1)}  % Reduced A operator for adjoint equation
\newcommand{\redBad}[1]{\hat{B}_{\mathrm{ad}}(#1)}  % Reduced B operator for adjoint equation
\newcommand{\SolutionManifold}{\mathcal{M}}  % Solution manifold for parametrized problems
\newcommand{\nt}{n_t}  % Number of time steps
\newcommand{\normal}{\vec{n}}  % Normal vector
\newcommand{\eps}{\varepsilon}  % Tolerance for greedy algorithms
\newcommand{\epssys}{\eps_{\mathrm{sys}}}  % Tolerance for reduction of system trajectories using HaPOD
\newcommand{\epsFTA}{\eps_{\mathrm{fta}}}  % Tolerance for greedy algorithm reducing only the final time adjoint states
\newcommand{\epsinner}{\eps_{\mathrm{in}}}  % Tolerance for the inner greedy iteration in the double greedy algorithm
\newcommand{\FOM}{FOM}  % Full-order model
\newcommand{\GROM}{G-ROM}  % Greedy reduced order model
\newcommand{\SRGROM}{SR-G-ROM}  % System reductions and subsequent greedy algorithm with reduced dynamical systems for optimal final time adjoint states
\newcommand{\GSRROM}{G-SR-ROM}  % Greedy algorithm for reducing optimal final time adjoint states using high-dimensional dynamical systems and subsequent system reductions
\newcommand{\GCROM}{GC-ROM}  % Greedy algorithm on the fully reduced model
\newcommand{\DGROM}{DouG-ROM}  % Double greedy algorithm
\newtheorem{theorem}{Theorem}
\newtheorem{lemma}{Lemma}
\theoremstyle{remark}
\newtheorem{remark}{Remark}
\newcommand*\samethanks[1][\value{footnote}]{\footnotemark[#1]}
\begin{document}
    \title{Two-stage model reduction approaches for the efficient and certified solution of parametrized optimal control problems}
    \author{Hendrik Kleikamp\,\orcidlink{0000-0003-1264-5941}\,\thanks{Funded by the Deutsche Forschungsgemeinschaft (DFG, German Research Foundation) under Germany's Excellence Strategy EXC 2044 –390685587, Mathematics Münster: Dynamics–Geometry–Structure.}\ \thanks{Corresponding author: {\tt hendrik.kleikamp@uni-muenster.de}}\ }
    \author{Lukas Renelt\,\orcidlink{0009-0003-3161-5219}\,\samethanks[1]}
    \affil{Institute for Analysis and Numerics, Mathematics Münster, University of Münster, Einsteinstrasse 62, 48149 Münster, Germany, {\tt hendrik.kleikamp@uni-muenster.de, lukas.renelt@uni-muenster.de}}

	\maketitle

    \begin{abstract}
        \noindent In this contribution we develop an efficient reduced order model for solving parametrized linear-quadratic optimal control problems with linear time-varying state system. The fully reduced model combines reduced basis approximations of the system dynamics and of the manifold of optimal final time adjoint states to achieve a computational complexity independent of the original state space. Such a combination is particularly beneficial in the case where a deviation in a low-dimensional output is penalized. In addition, an offline-online decomposed a posteriori error estimator bounding the error between the approximate final time adjoint with respect to the optimal one is derived and its reliability proven. We propose different strategies for building the involved reduced order models, for instance by separate reduction of the dynamical systems and the final time adjoint states or via greedy procedures yielding a combined and fully reduced model. These algorithms are evaluated and compared for a two-dimensional heat equation problem. The numerical results show the desired accuracy of the reduced models and highlight the speedup obtained by the newly combined reduced order model in comparison to an exact computation of the optimal control or other reduction approaches.
    \end{abstract}
    \noindent
    \textbf{Keywords: }Parametrized optimal control, Linear time-varying systems, Model order reduction, Reduced basis methods, Offline-online decomposition, A posteriori error estimation
    \newline
    \newline
    \textbf{MSC Classification: }49N10, 65M22, 35B30, 65-04

    \section{Introduction}
    Optimal control problems play an important role in several areas of applied mathematics. The governing dynamical systems frequently involve parameters and the resulting optimal control problem is to be solved fast for many different values of the parameters -- for instance in a real-time or many-query context. Solving the exact optimal control problem is usually already costly for a single parameter. Hence, doing so for many values of the parameter is prohibitively expensive and infeasible in most applications. A recent overview of methods for parametrized optimal control problems is given in~\cite{lazar2022control}.
    \par
    In order to solve parameter-dependent optimal control problems efficiently, model order reduction has been applied successfully in recent years. Several papers propose methods for model order reduction of parametrized dynamical systems, in particular considering linear time-invariant systems, see for instance~\cite{haasdonk2011efficient}. These approaches mainly focus on replacing the involved high-dimensional state systems by low-order approximations that can be solved efficiently. Some papers further treat in particular optimal control problems. For instance in~\cite{ballarin2022spacetime,dede2012reduced,lazar2016greedy,kaercher2014posteriori}, different model order reduction approaches for parametrized optimal control problems have been developed. In~\cite{lazar2016greedy,kleikamp2024greedy} the authors describe algorithms to obtain reduced bases for the manifold of optimal adjoint states at the final time of the time horizon for different optimal control problems involving linear time-invariant state systems. In this contribution we describe how to combine the two approaches in the case of time-varying systems, that is the reduced order modeling of the system dynamics and the model order reduction for the final time adjoint states. This combination will lead to a severe speedup compared to the individual reduced models. In particular for optimal control problems in which a deviation in a low-dimensional output is penalized, the set of optimal final time adjoint states also possesses a similar low-dimensional structure and we thus benefit significantly from an additional reduction of the final time adjoint states. Using the approaches proposed in this paper, optimal control problems with a high-dimensional state space and time-dependent system dynamics can then be solved efficiently with a complexity independent of the dimension of the state space.
    \par
    The idea of applying an additional system reduction to the reduced order model for final time adjoint states has also been considered in~\cite{fabrini2018reduced} where the authors propose to first compress the state and adjoint trajectories, and afterwards run a greedy algorithm using the reduced dynamical systems to determine a reduced model for the final time adjoint states. In this contribution we develop a reliable and efficiently computable a posteriori error estimator for the combined reduced order model which was, to the best of our knowledge, not available thus far. The error estimator builds on ideas from~\cite{haasdonk2011efficient} to estimate the error introduced by approximating the dynamical systems and~\cite{kleikamp2024greedy} to estimate the error in the approximation of the final time adjoint state. Furthermore, we discuss several approaches for constructing the involved reduced order models and demonstrate their behavior and performance numerically. In contrast to the previous contributions we consider time-varying systems and extend the reduced order models as well as the error estimators to this setting.
    \par
    Reduced basis methods have been applied to parametrized problems very successfully in the past, see for instance~\cite{benner2015survey,hesthaven2016certified}. In the context of control of parametrized dynamical systems, reduced basis methods were used for example in~\cite{dihlmann2016reduced} to compute a reduced Kalman filter for parametrized partial differential equations~(PDEs) or in~\cite{schmidt2015basis,schmidt2018reduced} to construct an approximate linear-quadratic regulator by solving a projected Riccati equation. In~\cite{przybilla2024model} reduced basis methods were applied to efficiently solve parametric differential-algebraic systems by means of balanced truncation, in~\cite{przybilla2024semiactive} to speed up the solution of parametric Lyapunov equations within an optimization process of damping values, and in~\cite{kaercher2017certified} to efficiently obtain certified solutions of parametrized elliptic optimal control problems. Recently, also data-driven techniques using machine learning approaches have been developed for optimal control problems, see for instance~\cite{verma2024neural,kleikamp2024greedy,keil2022adaptive}. Methods based on interpolation of reduced models~\cite{panzer2010parametric} or Gramians~\cite{son2021balanced} have been proposed as well.
    \par
    The main novel contributions of this paper are threefold: On the one hand, we extend the previously developed reduced order models for parametrized optimal control problems with time-invariant state system from~\cite{lazar2016greedy} and~\cite{fabrini2018reduced} to time-varying problems in the infinite-dimensional setting. We further describe a combined reduced order model and derive a reliable a posteriori error estimator that can be evaluated very efficiently using an offline-online decomposition. Finally, we propose and compare, by means of a two-dimensional numerical example, different approaches for constructing the involved reduced order models.
    \newline
    \newline
    The paper is organized as follows: In~\Cref{sec:parametrized-linear-time-varying-systems-optimal-control} we introduce the considered optimal control problem in an abstract Hilbert space setting. Afterwards, in~\Cref{sec:reduced-model-system-dynamics} the projection-based model order reduction approach for reducing the system dynamics is presented together with associated a posteriori error estimates and efficient offline-online decompositions. In~\Cref{sec:reduced-model-optimal-adjoint-states} we recall the reduced order model for final time adjoint states from~\cite{kleikamp2024greedy} and show its extension to time-varying systems. The previously defined reduced order models are combined to a fully reduced model in~\Cref{sec:fully-reduced-model}. We further present a reliable a posteriori error estimator that can be evaluated efficiently. Several strategies to construct the reduced order models entering the combined model are proposed in~\Cref{sec:strategies-computing-fully-reduced-model}. By means of a numerical example we show different features and behavior of the devised reduced order models and reduction algorithms in~\Cref{sec:numerical-experiments}. Furthermore, the efficiency and accuracy of the computed approximations are verified. The results obtained in this paper are summarized in~\Cref{sec:conclusion-outlook} and future perspectives are discussed.

	\section{Parametrized linear time-varying systems and optimal control}\label{sec:parametrized-linear-time-varying-systems-optimal-control}
	In this contribution, we consider linear-quadratic optimal control problems where the system dynamics are given by linear time-varying systems with parameter-dependent system components. In the following, we first introduce the dynamical systems and afterwards describe the corresponding optimal control problems. Furthermore, the optimality system associated with the optimal control problem is stated and the role of the adjoint state at final time discussed.
	
	\subsection{Linear time-varying control systems}\label{sec:linear-time-varying-control-systems}
	We describe the main ideas and systems in an abstract and possibly infinite-dimensional setting. To this end, let~$\X$, $\U$ and~$\Y$ be real Hilbert spaces. The space~$\X$ corresponds to the state space, whereas~$\U$ denotes the control space and~$\Y$ the output space. We further denote by~$\innerProd{\cdot}{\cdot}_\X$, $\innerProd{\cdot}{\cdot}_\U$ and~$\innerProd{\cdot}{\cdot}_\Y$ the inner products on~$\X$, $\U$ and~$\Y$, respectively, with their induced norms~$\normX{\,\cdot\,}$, $\normU{\,\cdot\,}$ and~$\normY{\,\cdot\,}$. The dual spaces associated to~$\X$, $\U$ and~$\Y$, i.e.~the Hilbert spaces of bounded linear functionals on~$\X$, $\U$ and~$\Y$, are denoted as~$\dualX$, $\dualU$ and~$\dualY$. Due to the Riesz representation theorem, the Riesz maps~$\Riesz{\X}\colon\X\to\dualX$, $\Riesz{\U}\colon\U\to\dualU$ and~$\Riesz{\Y}\colon\Y\to\dualY$ are isometric isomorphisms. Moreover, for Hilbert spaces~$Y_1$ and~$Y_2$, let~$\Lin{Y_1}{Y_2}$ denote the Banach space of bounded linear operators mapping from~$Y_1$ to~$Y_2$. Using this notation we have~$\Riesz{\X}\in\Lin{\X}{\dualX}$, $\Riesz{\U}\in\Lin{\U}{\dualU}$ and~$\Riesz{\Y}\in\Lin{\Y}{\dualY}$, as well as~$\dualX=\Lin{\X}{\R}$, $\dualU=\Lin{\U}{\R}$ and~$\dualY=\Lin{\Y}{\R}$. We further denote by~$\Bil{Y_1}{Y_2}$ the space of continuous bilinear forms mapping from~$Y_1\times Y_2$ to~$\R$. The dual paring between an element~$y\in Y_1$ and a continuous linear functional~$y'\in Y_1'$ is denoted as~$\innerProd{y}{y'}_{Y_1\times Y_1'}=\innerProd{y'}{y}_{Y_1'\times Y_1}\coloneqq y'(y)$.
	\par
	In order to define the parametrized control systems of interest, we consider a compact subset~$\params$ of some Banach space as the parameter set. We further consider the time interval~$[0,T]$ for some final time~$T>0$. For a given parameter~$\mu\in\params$, let~$\A{\mu}\in L^\infty([0,T];\Lin{\X}{\dualX})$ be the state operator, $\B{\mu}\in L^\infty([0,T];\Lin{\U}{\dualX})$ the control operator and~$\C(\mu)\in\Lin{\X}{\Y}$ the output operator. We are interested in optimal control problems with system dynamics for the state~$x_{\mu}\in\H\coloneqq L^2([0,T];\X)\cap H^1([0,T];\dualX)$ driven by the primal state equation which is for a parameter~$\mu\in\params$, control~$u\in\G\coloneqq L^2([0,T];\U)$, and initial state~$\bar{x}\in\X$ given as
	\begin{align}\label{equ:primal-equation}
		\E\frac{d}{dt}x_{\mu}(t) &= \A{\mu;t} x_{\mu}(t) + \B{\mu;t} u(t),\qquad x_{\mu}(0)=\bar{x},\\
		y_{\mu}(t) &= \C(\mu) x_{\mu}(t)\notag
	\end{align}
	for almost all~$t\in[0,T]$, where~$\E\in\Lin{\dualX}{\dualX}$ is an invertible operator (here,~$\E$ can also be the identity map~$\eye{\dualX}$). After discretization, the matrix representing~$\E$ arises for instance as the mass matrix in the discretization of parabolic PDEs using the finite element method. Since mass matrices are usually independent of the parameter, we refrain from considering a parameter-dependent operator~$\E$. However, all developed results extend readily to the more general case where~$\E$ depends on parameter.
	
	\subsection{Parametrized linear-quadratic optimal control problems}\label{sec:parametrized-linear-quadratic-optimal-control-problems}
	We consider parametrized optimal control problems with linear dynamics as introduced in~\Cref{sec:linear-time-varying-control-systems} and a quadratic objective functional. For a given parameter~$\mu\in\params$, the objective functional~$\optFunc{\mu}\colon\G\to[0,\infty)$ is for any control~$u\in\G$ defined as
	\begin{align*}
		\optFunc{\mu}(u) \coloneqq \frac{1}{2}\left[\normY{\C(\mu)\left(x_\mu(T)-x_\mu^T\right)}^2 + \int\limits_0^T \innerProd{u(t)}{\Rop(t)u(t)}_{\U\times\dualU}\d{t}\right]
	\end{align*}
	where~$x_\mu\in\H$ solves~\cref{equ:primal-equation}. Moreover,~$\Rop\colon[0,T]\to\Lin{\U}{\dualU}$ is an operator such that~$\Riesz{\U}^{-1}\Rop(t)$ is self-adjoint and strictly positive-definite for almost all times~$t\in[0,T]$. The assumptions on~$\Rop(t)$ imply in particular that~$\Rop(t)$ is invertible. By means of the operator~$\Rop$ it is possible to assign different weights to the individual components of the control. The weighting operator~$\Rop(t)$ can also be chosen in a parameter-dependent way. The objective functional measures the deviation in the output of the target state at final time~$T$ as well as the control energy (weighted by the operator~$\Rop(t)$ where the weighting may change over time). Since we only penalize a deviation of the output at the final time~$T$, we assume that the output operator~$\C(\mu)$ is constant with respect to time. For simplicity of notation we also omit the dependence of~$\C(\mu)$ on the parameter~$\mu\in\params$ in the following.
	\par
	The parametrized optimal control problem thus reads
	\begin{align}\label{equ:optimal-control-problem}
		\min_{u\in\G}\optFunc{\mu}(u),\quad\text{such that}\quad \E\frac{d}{dt}x_\mu(t)=\A{\mu}x_\mu(t)+\B{\mu}u(t)\quad\text{for almost all }t\in[0,T],\quad x_\mu(0)=x_\mu^0.
	\end{align}
	The objective functional~$\optFunc{\mu}$ is strongly convex with respect to the control~$u\in\G$ due to the positivity assumption on~$\Rop$. Furthermore, the state equation~\cref{equ:primal-equation} possesses a unique solution due to Carathéodory's existence theorem, see~\cite[Chapter~I.5]{hale1980ordinary}. The optimal control problem~\cref{equ:optimal-control-problem} therefore admits a unique solution, see for instance~\cite{peypouquet2015convex}.
	\par
	In the next section, we state the optimality system associated with the optimal control problem~\cref{equ:optimal-control-problem}.
	
	\subsection{Optimality system and optimal final time adjoint states}\label{sec:optimality-system}
	The following result has for instance been shown in~\cite{kleikamp2024greedy} for linear time-invariant systems without the operator~$\E$ on the left-hand side. Since significant changes to the statement and proof from~\cite{kleikamp2024greedy} are required, we again give a proof of the optimality system in the appendix.
	\begin{theorem}[Optimality system for the linear-quadratic optimal control problem]\label{thm:optimality-system}
		Given a parameter~$\mu\in\params$, the optimal control of the problem in~\cref{equ:optimal-control-problem} can equivalently be expressed as the solution to the optimality system
		\begin{subequations}\label{equ:optimality-system-main}
			\begin{align}
				\E\frac{d}{dt}x_\mu(t) &= \A{\mu;t} x_\mu(t)+\B{\mu;t} u_\mu(t), \label{equ:primal-system} \\
				u_\mu(t) &= -\Rop(t)^{-1}\B{\mu;t}^* \varphi_\mu(t),\label{equ:control-system} \\
				-\Ead\frac{d}{dt}\varphi_\mu(t) &= \A{\mu;t}^* \varphi_\mu(t), \label{equ:adjoint-system}
			\end{align}
			for almost all~$t\in[0,T]$ with initial respectively terminal conditions
			\begin{align}\label{equ:optimality-system-boundary-conditions}
				x_\mu(0) = x_\mu^0,\qquad \Ead\Riesz{\X}\varphi_\mu(T)=\Mop\left(x_\mu(T)-x_\mu^T\right),
			\end{align}
		\end{subequations}
		where~$\Mop\coloneqq\C^*\Riesz{\Y}\C\in\Lin{\X}{\dualX}$. The adjoint mass operator~$\Ead\in\Lin{\dualX}{\dualX}$ is given as~$\Ead=\Riesz{\X}\E^*\Riesz{\X}^{-1}$, i.e.~we have~$\Ead\Riesz{\X}=\Riesz{\X}\E^*$. We denote solutions of the optimality system~\cref{equ:optimality-system-main} as~$x_\mu^*\in\H$, $\varphi_\mu^*\in\H$, and~$u_\mu^*\in\G$, respectively.
	\end{theorem}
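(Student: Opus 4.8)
The plan is to derive the optimality system from the first-order optimality condition $\optFunc{\mu}'(u_\mu)=0$, which is both necessary and sufficient because $\optFunc{\mu}$ is strongly convex. First I would compute the Gateaux derivative of $\optFunc{\mu}$ at the optimal control $u_\mu$ in an arbitrary direction $\delta u\in\G$. Writing $x_\mu=x_\mu[u]$ for the state associated with control $u$, the map $u\mapsto x_\mu[u]$ is affine, and its derivative $\delta x$ in direction $\delta u$ solves the linearized (here: same, since the system is linear) primal equation $\E\frac{d}{dt}\delta x(t)=\A{\mu;t}\delta x(t)+\B{\mu;t}\delta u(t)$ with $\delta x(0)=0$. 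Differentiating $\optFunc{\mu}$ then yields
\begin{align*}
	\optFunc{\mu}'(u_\mu)[\delta u] = \innerProd{\C\bigl(x_\mu(T)-x_\mu^T\bigr)}{\C\,\delta x(T)}_{\Y} + \int_0^T\innerProd{\delta u(t)}{\Rop(t)u_\mu(t)}_{\U\times\dualU}\d{t},
\end{align*}
using that $\Riesz{\U}^{-1}\Rop(t)$ is self-adjoint. The first term can be rewritten as $\innerProd{\delta x(T)}{\Mop(x_\mu(T)-x_\mu^T)}_{\X\times\dualX}$ with $\Mop=\C^*\Riesz{\Y}\C$.

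The key step is to introduce the adjoint state $\varphi_\mu$ so as to eliminate the implicit dependence of $\delta x(T)$ on $\delta u$. I would define $\varphi_\mu$ as the solution of the terminal-value problem $-\Ead\frac{d}{dt}\varphi_\mu(t)=\A{\mu;t}^*\varphi_\mu(t)$ with $\Ead\Riesz{\X}\varphi_\mu(T)=\Mop(x_\mu(T)-x_\mu^T)$, and then test the linearized primal equation against $\varphi_\mu$ and integrate over $[0,T]$. The crucial manipulation is an integration-by-parts in time together with a careful transposition of the operator $\E$: since $\varphi_\mu$ lives in $\H$ (values in $\X$, derivative in $\dualX$) while $\delta x$ likewise lies in $\H$, the pairing $\int_0^T\innerProd{\E\frac{d}{dt}\delta x(t)}{\varphi_\mu(t)}\d{t}$ has to be handled through the Riesz identification; this is exactly where the adjoint mass operator $\Ead=\Riesz{\X}\E^*\Riesz{\X}^{-1}$ enters and why the terminal condition carries an $\Ead$. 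Carrying out the integration by parts, the $\frac{d}{dt}$ terms combine into the boundary term $\innerProd{\delta x(T)}{\Ead\Riesz{\X}\varphi_\mu(T)}_{\X\times\dualX}$ (the $t=0$ contribution vanishes since $\delta x(0)=0$), and the spatial operator terms cancel by the definition of the adjoint equation, leaving $\innerProd{\delta x(T)}{\Mop(x_\mu(T)-x_\mu^T)}_{\X\times\dualX}=\int_0^T\innerProd{\B{\mu;t}\delta u(t)}{\varphi_\mu(t)}_{\dualX\times\X}\d{t}=\int_0^T\innerProd{\delta u(t)}{\B{\mu;t}^*\varphi_\mu(t)}_{\U\times\dualU}\d{t}$.

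Substituting this identity back into the expression for $\optFunc{\mu}'(u_\mu)[\delta u]$ gives $\int_0^T\innerProd{\delta u(t)}{\B{\mu;t}^*\varphi_\mu(t)+\Rop(t)u_\mu(t)}_{\U\times\dualU}\d{t}=0$ for all $\delta u\in\G$. By the fundamental lemma of the calculus of variations this forces $\Rop(t)u_\mu(t)=-\B{\mu;t}^*\varphi_\mu(t)$ for almost all $t$, and since $\Rop(t)$ is invertible we obtain $u_\mu(t)=-\Rop(t)^{-1}\B{\mu;t}^*\varphi_\mu(t)$, i.e.\ \cref{equ:control-system}. Together with the primal equation \cref{equ:primal-system} and the adjoint equation \cref{equ:adjoint-system} with boundary conditions \cref{equ:optimality-system-boundary-conditions}, this is precisely the claimed optimality system; the equivalence (rather than mere necessity) follows from strong convexity of $\optFunc{\mu}$. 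I expect the main obstacle to be the rigorous bookkeeping of the dual pairings and the transposition rule for $\E$ in the integration-by-parts step — in particular justifying that $t\mapsto\innerProd{\delta x(t)}{\Ead\Riesz{\X}\varphi_\mu(t)}_{\X\times\dualX}$ is absolutely continuous with the expected derivative, which uses that both $\delta x$ and $\varphi_\mu$ belong to $L^2([0,T];\X)\cap H^1([0,T];\dualX)$ — whereas the algebra of collecting terms is routine.
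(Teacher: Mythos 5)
Your proposal is correct and follows essentially the same route as the paper's proof in~\Cref{app:proof-optimality-system}: the paper phrases the computation via a Hamiltonian and a perturbation $u^*+\varepsilon v$ rather than a direct Gateaux derivative, but the substance is identical — compute the first variation, integrate by parts in time using $\Ead\Riesz{\X}=\Riesz{\X}\E^*$ (the paper proves the identity $\frac{d}{dt}(\E^*\varphi(t))=\Ead\frac{d}{dt}\varphi(t)$ explicitly, which is the bookkeeping step you flagged), choose $\varphi$ to satisfy the adjoint equation and terminal condition so the state-perturbation terms cancel, and read off the control law from the remaining integral. Sufficiency via strong convexity is likewise how the paper obtains equivalence rather than mere necessity.
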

	\begin{proof}
		See~\Cref{app:proof-optimality-system}.
	\end{proof}
	From the optimality system we can in particular deduce that the system dynamics of the adjoint system~\cref{equ:adjoint-system} and therefore also the dynamics of the primal system~\cref{equ:primal-system} are already fully described by the optimal final time adjoint state~$\optFTA{\mu}\in\X$. It is in particular possible to compute the optimal final time adjoint~$\optFTA{\mu}$ by solving a linear system. In order to obtain a unique solution, we assume that the operator~$\EplusMtimesGramian\in\Lin{\X}{\X}$ is invertible where the (weighted) controllability Gramian~$\Gramian{\mu}\in\Lin{\X}{\X}$ is defined by its application to~$p\in\X$ as
	\begin{align}\label{equ:gramian-product}
		\Gramian{\mu} p \coloneqq -x_\mu(T).
	\end{align}
	Here,~$(x_\mu,u_\mu,\varphi_\mu)\in\H\times\G\times\H$ solve the optimality system~\cref{equ:primal-system,equ:control-system,equ:adjoint-system} with initial condition~$x_\mu(0)=0\in\X$ and terminal condition~$\varphi_\mu(T)=p$. Applying the controllability Gramian to a given final time adjoint state~$p$ thus solves (up to a minus sign) the optimality system with zero initial condition for the primal equation and~$p$ as the final time adjoint and returns the state at the final time~$T$. Since all involved operators are linear and no free dynamics need to be taken into account (due to the homogeneous initial condition for the primal equation), the Gramian operator~$\Gramian{\mu}$ is indeed a linear operator. The boundedness results from the boundedness of all involved operators and the choice of a setting with finite time horizon. The linear system identifying the optimal final time adjoint~$\optFTA{\mu}\in\X$ is then characterized in the following Lemma, see also~\cite[Section~2.2]{kleikamp2024greedy} for the linear time-invariant case without the operator~$\E$:
	\begin{lemma}[Linear system of equations for the optimal final time adjoint state]\label{lem:linear-system-optimal-final-time-adjoint}
		The optimal final time adjoint state~$\optFTA{\mu}\in\X$ that solves the optimality system~\cref{equ:primal-system,equ:control-system,equ:adjoint-system} is given as the unique solution to the linear system
		\begin{align}\label{equ:linear-system-optimal-final-time-adjoint}
			\big(\EplusMtimesGramian\big)\optFTA{\mu} = \MxNaughtMinusxT
		\end{align}
		where~$\StateTrans{\mu}{t_1}{t_0}\in\Lin{\X}{\X}$ denotes the state transition operator of the homogeneous system~$\E\frac{d}{dt}x_\mu(t)=\A{\mu;t}x_\mu(t)$ with initial condition at time~$t_0$, i.e.~we have~$x_\mu(t_1)=\StateTrans{\mu}{t_1}{t_0}x_\mu(t_0)$.
	\end{lemma}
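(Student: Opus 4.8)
The plan is to start from the optimality system \cref{equ:optimality-system-main} furnished by \Cref{thm:optimality-system} and turn its terminal condition into a closed linear equation for $\optFTA{\mu}=\varphi_\mu^*(T)$ by eliminating the primal state at the final time.

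First I would observe that, unlike a general two-point boundary value problem, the adjoint equation \cref{equ:adjoint-system} is decoupled from the primal state: its right-hand side only involves $\varphi_\mu$ itself. Hence, once the terminal value $\varphi_\mu^*(T)=\optFTA{\mu}$ is fixed, the whole adjoint trajectory $\varphi_\mu^*$ on $[0,T]$ is uniquely determined — the backward homogeneous equation being well-posed by the same Carathéodory argument used for \cref{equ:primal-equation}, using that $\Ead$ is invertible — and through the feedback law \cref{equ:control-system} so is the optimal control $u_\mu^*(t)=-\Rop(t)^{-1}\B{\mu;t}^*\varphi_\mu^*(t)$. Thus both $\varphi_\mu^*$ and $u_\mu^*$ depend linearly on $\optFTA{\mu}$.

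Next I would exploit the linearity of the primal equation \cref{equ:primal-system} in the pair (initial value, control) to split $x_\mu^*=w_\mu+z_\mu$, where $w_\mu$ solves the homogeneous state equation $\E\frac{d}{dt}w_\mu=\A{\mu;t}w_\mu$ with $w_\mu(0)=x_\mu^0$, so that $w_\mu(T)=\StateTrans{\mu}{T}{0}x_\mu^0$ by the definition of the state transition operator, and $z_\mu$ solves \cref{equ:primal-system} with zero initial value and control $u_\mu^*$. The triple $(z_\mu,u_\mu^*,\varphi_\mu^*)$ then solves \cref{equ:primal-system,equ:control-system,equ:adjoint-system} with initial condition $z_\mu(0)=0$ and terminal condition $\varphi_\mu^*(T)=\optFTA{\mu}$, which is exactly the configuration appearing in the definition \cref{equ:gramian-product} of the controllability Gramian; hence $z_\mu(T)=-\Gramian{\mu}\optFTA{\mu}$ and therefore $x_\mu^*(T)=\StateTrans{\mu}{T}{0}x_\mu^0-\Gramian{\mu}\optFTA{\mu}$.

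Finally I would insert this identity into the terminal condition from \cref{equ:optimality-system-boundary-conditions}, use the relation $\Ead\Riesz{\X}=\Riesz{\X}\E^*$ recorded in \Cref{thm:optimality-system}, and apply $\Riesz{\X}^{-1}$ from the left to get $\E^*\optFTA{\mu}=\Riesz{\X}^{-1}\Mop\big(\StateTrans{\mu}{T}{0}x_\mu^0-\Gramian{\mu}\optFTA{\mu}-x_\mu^T\big)$; moving the $\Gramian{\mu}\optFTA{\mu}$ contribution to the left-hand side yields precisely \cref{equ:linear-system-optimal-final-time-adjoint}. Uniqueness is then immediate from the standing assumption that $\EplusMtimesGramian$ is invertible: any optimal final time adjoint satisfies this linear system, and the system has a unique solution, so the two characterizations coincide. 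The only delicate points are the routine well-posedness checks in the weak setting $\H=L^2([0,T];\X)\cap H^1([0,T];\dualX)$ needed to guarantee that $\StateTrans{\mu}{T}{0}$ and $\Gramian{\mu}$ are well-defined bounded operators and that the superposition used in the splitting $x_\mu^*=w_\mu+z_\mu$ is legitimate; these follow from linearity together with the existence results already invoked for \cref{equ:optimal-control-problem}, so no new analytical ingredient is required.
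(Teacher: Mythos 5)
Your proposal is correct and follows essentially the same route as the paper's proof: establish $x_\mu^*(T)=\StateTrans{\mu}{T}{0}x_\mu^0-\Gramian{\mu}\optFTA{\mu}$ by linearity, substitute into the terminal condition \cref{equ:optimality-system-boundary-conditions}, and rearrange using $\Ead\Riesz{\X}=\Riesz{\X}\E^*$. You merely spell out the superposition argument (the splitting $x_\mu^*=w_\mu+z_\mu$ and the identification of $z_\mu(T)$ with $-\Gramian{\mu}\optFTA{\mu}$ via \cref{equ:gramian-product}) that the paper compresses into the phrase ``due to the linearity of the state and adjoint equation.''
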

	\begin{proof}
		We first observe that it holds
		\begin{align*}
			x_\mu^*(T) = \StateTrans{\mu}{T}{0}x_\mu^0 - \Gramian{\mu}\optFTA{\mu}
		\end{align*}
		due to the linearity of the state and adjoint equation. Inserting this result into the terminal condition~\cref{equ:optimality-system-boundary-conditions} we obtain
		\begin{align*}
			\Ead\Riesz{\X}\optFTA{\mu} = \Mop\left(\StateTrans{\mu}{T}{0}x_\mu^0 - \Gramian{\mu}\optFTA{\mu}-x_\mu^T\right).
		\end{align*}
		After rearranging this equation, we use that~$\Ead\Riesz{\X}=\Riesz{\X}\E^*$ and apply~$\Riesz{\X}^{-1}$ from the left. This yields the claimed identity.
	\end{proof}
	We remark that the operator~$\Gramian{\mu}$ does not need to be assembled, but can instead be evaluated by solving two dynamical systems. It is therefore possible to apply iterative solvers to the linear system in~\Cref{lem:linear-system-optimal-final-time-adjoint} without computing the operator~$\Gramian{\mu}$ explicitly.
	\par
	In the subsequent sections we derive reduced order models first for speeding up the solution of the primal and adjoint systems~\cref{equ:primal-system} and~\cref{equ:adjoint-system} in~\Cref{sec:reduced-model-system-dynamics}, and second for approximating the manifold~$\SolutionManifold$ of optimal final time adjoint states given as
	\begin{align}\label{equ:manifold}
		\SolutionManifold \coloneqq \{\optFTA{\mu}:\mu\in\params\}\subset\X
	\end{align}
	in~\Cref{sec:reduced-model-optimal-adjoint-states}.
	
	\section{Reduced order modeling for system dynamics}\label{sec:reduced-model-system-dynamics}
	The authors in~\cite{haasdonk2011efficient} describe a projection-based reduced order model for linear control systems together with a reliable a posteriori error estimator in a finite-dimensional setting. In this section, we extend the derivation of the reduced order model and error estimator to our generalized case. We will later apply this reduced order model to the setting of optimal control problems as introduced in the previous section. To this end, we will make use of a reduced version of the Gramian operator~\cref{equ:gramian-product} and derive an error estimate for its application. This also requires a corresponding a posteriori error estimate for the adjoint equation which we describe in this section as well.
	
	\subsection{Projection-based reduced basis model order reduction}\label{sec:reduced-basis-mor-for-system-dynamics}
	In order to define reduced order models for the system dynamics, we consider Petrov-Galerkin-projections of the primal and adjoint equations onto (low-dimensional) subspaces of the state space~$\X$. Here, we allow for different subspaces for the primal and the adjoint equation, respectively. To this end, let~$\spaceVpr\subset\X$ and~$\spaceWpr\subset\dualX$ be finite-dimensional subspaces of~$\X$ with dimension~$\dim{\spaceVpr}=\dim{\spaceWpr}=\kpr\in\N$. Furthermore, for the adjoint system we consider finite-dimensional subspaces~$\spaceVad\subset\X$ and~$\spaceWad\subset\dualX$ with~$\dim{\spaceVad}=\dim{\spaceWad}=\kad\in\N$. The goal is to find subspaces~$\spaceVpr$, $\spaceWpr$, $\spaceVad$, and~$\spaceWad$ such that the dimensions~$\kpr\in\N$ and~$\kad\in\N$ of the reduced spaces are small compared to the original state space while still capturing the respective system dynamics accurately in the reduced ansatz space. If this is the case, the reduced state and adjoint equations defined below can be solved efficiently. The reduced dynamical systems associated to the optimality system in~\Cref{thm:optimality-system} are given as
	\begin{subequations}\label{equ:reduced-optimality-system-main}
		\begin{align}
			\redEpr\frac{d}{dt}\hat{x}_{\mu}(t) &= \redApr{\mu;t} \hat{x}_{\mu}(t) + \redBpr{\mu;t} u(t),\label{equ:reduced-primal-system} \\
			\hat{u}_{\mu}(t) &= -\Rop(t)^{-1}\redBad{\mu;t}^* \hat{\varphi}_{\mu}(t),\label{equ:reduced-control-system} \\
			-\redEad\frac{d}{dt}\hat{\varphi}_{\mu}(t) &= \redAad{\mu;t}^* \hat{\varphi}_{\mu}(t),\label{equ:reduced-adjoint-system}
		\end{align}
		with projected initial and terminal conditions
		\begin{align}\label{equ:reduced-optimality-system-boundary-conditions}
			\hat{x}_{\mu}(0)=\hat{\bar{x}}=\Projection{\spaceVpr}\bar{x},\qquad \hat{\varphi}_{\mu}(T)=\hat{\bar{\varphi}}=\Projection{\spaceVad}\bar{\varphi},
		\end{align}
	\end{subequations}
	where the projected system operators are defined as
	\begin{align*}
		\redEpr &= \Projection{\spaceWpr}\restricted{\E}{\Riesz{\X}\spaceVpr}\in\Lin{\Riesz{\X}\spaceVpr}{\spaceWpr},& \redEad &= \Projection{\spaceWad}\restricted{\Ead}{\Riesz{\X}\spaceVad}\in\Lin{\Riesz{\X}\spaceVad}{\spaceWad},\\
		\redApr{\mu;t} &= \Projection{\spaceWpr}\restricted{\A{\mu;t}}{\spaceVpr}\in\Lin{\spaceVpr}{\spaceWpr},& \redBpr{\mu;t} &= \Projection{\spaceWpr}\B{\mu;t}\in\Lin{\U}{\spaceWpr},\\
		\redAad{\mu;t} &= \Projection{\spaceWad}\restricted{\A{\mu;t}}{\spaceVad}\in\Lin{\spaceVad}{\spaceWad},& \redBad{\mu;t}^* &= \restricted{\B{\mu;t}^*}{\spaceVad}\in\Lin{\spaceVad}{\dualU}.
	\end{align*}
	In order for the reduced systems to be well-defined, we assume that~$\redEpr$ and~$\redEad$ are invertible and thus isomorphisms.
	Furthermore, the initial condition~$\hat{\bar{x}}\in\spaceVpr$ and the terminal condition~$\hat{\bar{\varphi}}\in\spaceVad$ might be given by the projection of some high-dimensional initial state~$\bar{x}\in\X$ and terminal state~$\bar{\varphi}\in\X$ onto the spaces~$\spaceVpr$ and~$\spaceVad$, respectively, as shown in~\cref{equ:reduced-optimality-system-boundary-conditions}. We denote by~$\Hpr\coloneqq L^2([0,T];\spaceVpr)\cap H^1([0,T];\Riesz{\X}\spaceVpr)$ and~$\Had\coloneqq L^2([0,T];\spaceVad)\cap H^1([0,T];\Riesz{\X}\spaceVad)$ the spaces of reduced time-dependent primal and adjoint states. In the reduced case, different spaces for primal and adjoint states are possible to project the respective systems. Since we are interested in solving optimal control problems in an efficient manner, we furthermore consider a reduced version~$\RedGramian{\mu}\in\Lin{\spaceVad}{\spaceVpr}$ of the Gramian operator~$\Gramian{\mu}$ that uses the reduced systems~\cref{equ:reduced-primal-system,equ:reduced-control-system,equ:reduced-adjoint-system} and is defined for~$\hat{p}\in\spaceVad$ as
	\begin{align}\label{equ:reduced-gramian-product}
		\RedGramian{\mu}\hat{p} \coloneqq -\hat{x}_\mu(T),
	\end{align}
	where, similarly to the definition of the full Gramian operator, $(\hat{x}_\mu,\hat{u}_\mu,\hat{\varphi}_\mu)\in\Hpr\times\G\times\Had$ solve the reduced optimality system in~\cref{equ:reduced-primal-system,equ:reduced-control-system,equ:reduced-adjoint-system} with initial condition~$\hat{x}_\mu(0)=0\in\spaceVpr$ and terminal condition~$\hat{\varphi}_\mu(T)=\hat{p}$. Similar to the original Gramian~$\Gramian{\mu}$, we can compute~$\RedGramian{\mu}\hat{p}$ very efficiently by solving reduced systems. We further denote by~$\RedPrStateTrans{\mu}{t_1}{t_0}\in\Lin{\spaceVpr}{\spaceVpr}$ and~$\RedAdStateTrans{\mu}{t_1}{t_0}\in\Lin{\spaceVad}{\spaceVad}$ the state transition operators associated with~$(\redEpr,\redApr{\mu;t})$ and~$(\redEad,\redAad{\mu;t})$, respectively.
	
	\begin{remark}[Counterparts to the reduced operators in the matrix case]\label{rem:matrix-case}
		In the discrete setting, the reduced spaces~$\spaceVpr$, $\spaceWpr$, $\spaceVad$ and~$\spaceWad$ are typically represented via matrices~$\Vpr\in\R^{n\times\kpr}$, $\Wpr\in\R^{n\times\kpr}$, $\Vad\in\R^{n\times\kad}$ and~$\Wad\in\R^{n\times\kad}$. These matrices map reduced coefficients to the respective high-dimensional element from the state space~$\X=\R^n$. The reduced spaces are thus given as the images of the basis matrices, i.e.~$\spaceVpr=\im{\Vpr}$, $\spaceWpr=\im{\Wpr}$, $\spaceVad=\im{\Vad}$ and~$\spaceWad=\im{\Wad}$. The projected system matrices in the discrete case are given as
		\begin{align*}
			\redEpr &= \Wpr^\top\E\Vpr\in\R^{\kpr\times\kpr},& \redEad &= \Wad^\top\E\Vad\in\R^{\kad\times\kad},&&\\
			\redApr{\mu;t} &= \Wpr^\top\A{\mu;t}\Vpr\in\R^{\kpr\times\kpr},& \redBpr{\mu;t} &= \Wpr\B{\mu;t}\in\R^{\kpr\times m},\\
			\redAad{\mu;t} &= \Wad^\top\A{\mu;t}\Vad\in\R^{\kad\times\kad},& \redBad{\mu;t}^\top &= \B{\mu;t}^\top\Vad\in\R^{m\times\kad},
		\end{align*}
		where~$m\coloneqq\dim\U$ denotes the number of controls. The assumption that~$\redEpr$ is invertible corresponds to the condition that~$\Wpr^\top\E\Vpr$ is invertible (and similarly for~$\redEad$). This is for instance the case if~$\Vpr$ and~$\Wpr$ are biorthogonal with respect to~$\E$, i.e.~if we construct the projection matrices such that~$\Wpr^\top\E\Vpr=\eye{\kpr}$. This assumption is typically also advantageous in the context of discretized~PDEs where~$\E$ is the mass matrix of the discretization, see also the numerical experiment in~\Cref{sec:numerical-experiments}.
	\end{remark}
	
	\subsubsection*{Offline-online decomposition of the reduced order model}
	To be able to obtain efficient reduced order models, we assume that the operators~$\A{\mu;t}$ and~$\B{\mu;t}$ admit parameter-separable forms
	\begin{align}\label{equ:parameter-separability}
		\A{\mu;t} = \sum\limits_{q=1}^{Q_A} \theta_A^q(\mu;t)A_q\qquad\text{and}\qquad \B{\mu;t} = \sum\limits_{q=1}^{Q_B} \theta_B^q(\mu;t)B_q,
	\end{align}
	with continuous functions~$\theta_A^q\colon\params\times[0,T]\to\R$ and (parameter-independent) operators~$A_q\in\Lin{\X}{\dualX}$ for~$q=1,\dots,Q_A$, as well as continuous functions~$\theta_B^q\colon\params\times[0,T]\to\R$ and operators~$B_q\in\Lin{\U}{\dualX}$ for~$q=1,\dots,Q_B$. Further, the initial state~$x_\mu^0\in\X$ as well as the target state~$x_\mu^T\in\X$ are assumed to be parameter-separable similar to the system operators~$\A{\mu;t}$ and~$\B{\mu;t}$, i.e.~we have that
	\begin{align*}
		x_\mu^0 = \sum\limits_{q=1}^{Q_{x^0}} \theta_{x^0}^q(\mu)x_q^0\qquad\text{and}\qquad x_\mu^T = \sum\limits_{q=1}^{Q_{x^T}} \theta_{x^T}^q(\mu)x_q^T
	\end{align*}
	with continuous functions~$\theta_{x^0}^q\colon\params\times[0,T]\to\R$ and states~$x_q^0\in\X$ for~$q=1,\dots,Q_{x^0}$, as well as continuous functions~$\theta_{x^T}^q\colon\params\times[0,T]\to\R$ and states~$x_q^T\in\X$ for~$q=1,\dots,Q_{x^T}$.
	\par
	To solve the reduced systems in a fast manner, we also have to be able to assemble the reduced initial condition~$\Projection{\spaceVpr}x_\mu^0\in\R^{\kpr}$ as well as the reduced system operators~$\redApr{\mu;t}\in\Lin{\spaceVpr}{\spaceWpr}$, $\redBpr{\mu;t}\in\Lin{\U}{\spaceWpr}$, $\redAad{\mu;t}\in\Lin{\spaceVad}{\spaceWad}$, and~$\redBad{\mu;t}^*\in\Lin{\spaceVad}{\dualU}$ efficiently. To this end, one first precomputes~$v_0^q\coloneqq \Projection{\spaceVpr}x_q^0\in\spaceVpr$ for all~$q=1,\dots,Q_{x^0}$. During the online phase, we can thus derive~$\Projection{\spaceVpr}x_\mu^0$ as
	\begin{align*}
		\Projection{\spaceVpr}x_\mu^0 = \sum\limits_{q=1}^{Q_{x^0}}\theta_{x^0}^q(\mu)v_0^q
	\end{align*}
	for a new parameter~$\mu\in\params$. Then we have
	\begin{align*}
		\RedPrStateTrans{\mu}{T}{0}\Projection{\spaceVpr}x_\mu^0 = \hat{x}_\mu(T)\qquad\text{for }u=0\in\G\text{ and }\hat{x}_\mu(0)=\Projection{\spaceVpr}x_\mu^0.
	\end{align*}
	We also assemble~$\redApr{\mu;t}$, $\redBpr{\mu;t}$, $\redAad{\mu;t}$, and~$\redBad{\mu;t}$ efficiently using the parameter-separable form of~$\A{\mu;t}$ and~$\B{\mu;t}$ by first computing~$\hat{A}_{\text{pr}}^q\coloneqq \Projection{\spaceWpr}\restricted{A_q}{\spaceVpr}\in\Lin{\spaceVpr}{\spaceWpr}$ as well as~$\hat{A}_{\text{ad}}^q\coloneqq \Projection{\spaceWad}\restricted{A_q}{\spaceVad}\in\Lin{\spaceVad}{\spaceWad}$ for~$q=1,\dots,Q_A$ and similarly~$\hat{B}_{\text{pr}}^q\coloneqq\Projection{\spaceWpr}B^q\in\Lin{\U}{\spaceWpr}$ as well as~$\big(\hat{B}_{\text{ad}}^*\big)^q\coloneqq \restricted{\big(B^q\big)^*}{\spaceVad}\in\Lin{\spaceVad}{\dualU}$ for~$q=1,\dots,Q_B$ offline and afterwards assembling
	\begin{align*}
		\redApr{\mu;t} = \sum\limits_{q=1}^{Q_A}\theta_A^q(\mu;t)\hat{A}_{\text{pr}}^q\quad\text{and}\quad\redBpr{\mu;t} = \sum\limits_{q=1}^{Q_B}\theta_B^q(\mu;t)\hat{B}_{\text{pr}}^q
	\end{align*}
	as well as
	\begin{align*}
		\redAad{\mu;t} = \sum\limits_{q=1}^{Q_A}\theta_A^q(\mu;t)\hat{A}_{\text{ad}}^q\quad\text{and}\quad\redBad{\mu;t}^* = \sum\limits_{q=1}^{Q_B}\theta_B^q(\mu;t)\big(\hat{B}_{\text{ad}}^*\big)^q
	\end{align*}
	online in a time independent of the dimension of the state space~$\X$.
	
	\subsection{A posteriori error estimation}\label{sec:a-posteriori-for-primal-and-adjoint-systems}
	In this section, we recall the a posteriori error estimator introduced in~\cite{haasdonk2011efficient} and state the respective error estimate for the adjoint system for later usage. Recently, some other strategies for a posteriori error estimation of reduced basis methods for dynamical systems have been proposed, see for instance~\cite{rettberg2023improved,antoulas2018model}. In~\cite{rettberg2023improved} the authors formulate a hierarchical error estimator and an error bound based on an auxiliary linear problem for port-Hamiltonian systems. The general ideas of these approaches can also be applied in our setting. The approach in~\cite{antoulas2018model} builds on an improved error estimation by means of a dual system introduced in~\cite{haasdonk2011efficient} and successively reduces certain error systems.
	\par
	To describe the error estimator from~\cite{haasdonk2011efficient}, we start by defining the errors of the reconstruction of the primal and of the adjoint state as
	\begin{align*}
		\ErrPrim{x}{\hat{x}} &\coloneqq x-\hat{x}\in\H,\\
		\ErrAdjo{\varphi}{\hat{\varphi}} &\coloneqq \varphi-\hat{\varphi}\in\H,
	\end{align*}
	for~$x,\varphi\in\H$, $\hat{x}\in\Hpr$, and~$\hat{\varphi}\in\Had$. Furthermore, we consider the residuals of the primal and the dual equation which are given as
	\begin{align*}
		\ResPrim{\mu}{\hat{x}}{u}(t) &\coloneqq \A{\mu;t} \hat{x}(t) + \B{\mu;t} u(t) - \E\frac{d}{dt}\hat{x}(t)\in\dualX, \\
		\ResAdjo{\mu}{\hat{\varphi}}(t) &\coloneqq \A{\mu;t}^* \hat{\varphi}(t) + \Ead\frac{d}{dt}\hat{\varphi}(t)\in\dualX,
	\end{align*}
	for an approximate state trajectory~$\hat{x}\in\Hpr$, a control~$u\in\G$ and an approximate adjoint trajectory~$\hat{\varphi}\in\Had$. For a reduced primal solution~$\hat{x}_\mu$ that solves~\cref{equ:reduced-primal-system} for a control~$u\in\G$ and a reduced adjoint solution~$\hat{\varphi}_\mu$ that solves~\cref{equ:reduced-adjoint-system}, we have the Petrov-Galerkin-orthogonality conditions
	\begin{align*}
		\Projection{\spaceWpr}\ResPrim{\mu}{\hat{x}_\mu}{u}=0\in\spaceWpr\qquad\text{and}\qquad\Projection{\spaceWad}\ResAdjo{\mu}{\hat{\varphi}_\mu}=0\in\spaceWad.
	\end{align*}
	
	\subsubsection*{Error estimators for the primal and adjoint systems}
	Having this notation at hand, we can introduce the error estimators for the primal and the adjoint problem. Eventually, we are interested in the error with respect to the norm~$\normX{\,\cdot\,}$. A suitable choice of the inner product can lead to improved  error estimates in practice (see also~\cite[Section~4]{haasdonk2011efficient}). The error estimator for the primal system is stated in the following Lemma:
	\begin{lemma}[{A posteriori error estimator for the reduced order model of the primal system; see~\cite[Proposition~4.1]{haasdonk2011efficient}}]\label{lem:error-estimator-primal-system}
		Let~$\mu\in\params$ be a parameter and~$u\in\G$ be a control. Let further $\hat{x}_\mu\in\Hpr$ be the reduced primal state that solves the reduced primal equation in~\cref{equ:reduced-primal-system} for the initial condition~$\hat{x}_\mu(0)=\Projection{\spaceVpr}x_\mu^0$ and the control~$u$. Then we have at time~$t\in[0,T]$ that
		\begin{align*}
			\normX{x_\mu(t)-\hat{x}_\mu(t)}\ \leq\ \EstPrim{\mu}{u}(t).
		\end{align*}
		The error estimator~$\EstPrim{\mu}{u}$ is given as
		\begin{align*}
			\EstPrim{\mu}{u}(t) \coloneqq C_1(\mu)\normX{x_\mu^0-\Projection{\spaceVpr}x_\mu^0} + C_1(\mu)\int\limits_0^t \normX{\E^{-1}\ResPrim{\mu}{\hat{x}_\mu}{u}(s)}\d{s},
		\end{align*}
		where the constant~$C_1(\mu)$ is such that
		\begin{align}\label{equ:constant-c1}
			C_1(\mu) \geq \sup\limits_{t\in[0,T]}\sup\limits_{s\in[0,t]} \normX{\StateTrans{\mu}{t}{s}} = \sup\limits_{t\in[0,T]}\sup\limits_{s\in[0,t]}\sup\limits_{0\neq x\in\X}\frac{\normX{\StateTrans{\mu}{t}{s}x}}{\normX{x}}.
		\end{align}
	\end{lemma}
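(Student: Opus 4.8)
The plan is to derive the error bound from the variation-of-constants representation of the error trajectory $\ErrPrim{x_\mu}{\hat{x}_\mu}$ in terms of the state transition operator, exactly as in the time-invariant case of \cite{haasdonk2011efficient}, but now accounting for the time dependence of $\A{\mu;t}$ and the operator $\E$. First I would write down the differential equation satisfied by the error. Subtracting the reduced primal equation \cref{equ:reduced-primal-system}, interpreted in $\dualX$ after applying the canonical injections, from the exact primal equation \cref{equ:primal-system}, one finds
\begin{align*}
	\E\frac{d}{dt}\ErrPrim{x_\mu}{\hat{x}_\mu}(t) = \A{\mu;t}\ErrPrim{x_\mu}{\hat{x}_\mu}(t) - \ResPrim{\mu}{\hat{x}_\mu}{u}(t),
\end{align*}
with initial value $\ErrPrim{x_\mu}{\hat{x}_\mu}(0) = x_\mu^0 - \Projection{\spaceVpr}x_\mu^0$. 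Here I use that $\A{\mu;t}\hat{x}_\mu(t) + \B{\mu;t}u(t) - \E\frac{d}{dt}\hat{x}_\mu(t) = \ResPrim{\mu}{\hat{x}_\mu}{u}(t)$ by the very definition of the primal residual, so the $\B{\mu;t}u(t)$ terms cancel.

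Next I would invoke the Duhamel / variation-of-constants formula for the inhomogeneous linear time-varying system $\E\frac{d}{dt}z(t) = \A{\mu;t}z(t) + f(t)$ with $f(t) = -\ResPrim{\mu}{\hat{x}_\mu}{u}(t)\in\dualX$. Since $\StateTrans{\mu}{t}{s}$ is by definition the state transition operator associated with the homogeneous equation $\E\frac{d}{dt}x = \A{\mu;t}x$, the solution admits the representation
\begin{align*}
	\ErrPrim{x_\mu}{\hat{x}_\mu}(t) = \StateTrans{\mu}{t}{0}\big(x_\mu^0 - \Projection{\spaceVpr}x_\mu^0\big) - \int\limits_0^t \StateTrans{\mu}{t}{s}\,\E^{-1}\ResPrim{\mu}{\hat{x}_\mu}{u}(s)\d{s},
\end{align*}
where $\E^{-1}\colon\dualX\to\dualX$ appears because the forcing acts on the right-hand side of an equation with $\E$ in front of the time derivative, and $\E^{-1}\ResPrim{\mu}{\hat{x}_\mu}{u}(s)$ is then identified with an element of $\X$ via $\Riesz{\X}^{-1}$ so that $\StateTrans{\mu}{t}{s}$ can be applied (alternatively, one works throughout in $\X$ after pulling back by the Riesz map; the bookkeeping of identifications is the one place where care is needed). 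Taking $\normX{\,\cdot\,}$ on both sides, using the triangle inequality and pulling the norm inside the integral, then bounding each occurrence of $\normX{\StateTrans{\mu}{t}{s}z}$ by $C_1(\mu)\normX{z}$ using the operator-norm bound \cref{equ:constant-c1} (valid since $0\le s\le t\le T$), yields exactly
\begin{align*}
	\normX{\ErrPrim{x_\mu}{\hat{x}_\mu}(t)} \le C_1(\mu)\normX{x_\mu^0 - \Projection{\spaceVpr}x_\mu^0} + C_1(\mu)\int\limits_0^t \normX{\E^{-1}\ResPrim{\mu}{\hat{x}_\mu}{u}(s)}\d{s} = \EstPrim{\mu}{u}(t),
\end{align*}
which is the claim.

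The main obstacle is not any estimate but rather the functional-analytic setup: making precise the variation-of-constants formula in the infinite-dimensional, $\dualX$-valued, time-varying setting, ensuring that $\ErrPrim{x_\mu}{\hat{x}_\mu}$ genuinely lies in $\H$ so that it is (absolutely) continuous into $\dualX$ and the fundamental theorem of calculus applies, and keeping the identifications between $\X$, $\dualX$ and the Riesz map $\Riesz{\X}$ consistent — in particular clarifying in what space $\StateTrans{\mu}{t}{s}$ acts when its argument is the Riesz-inverse of a residual in $\dualX$. Once existence and boundedness of the state transition operator are granted (which hold by Carathéodory theory on the finite horizon $[0,T]$, as already used for \cref{equ:primal-equation}), the remaining steps are the routine triangle-inequality and sup-bound manipulations above. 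I would also note that the result is a direct transcription of \cite[Proposition~4.1]{haasdonk2011efficient}; the only genuinely new point relative to that reference is the presence of $\E$, handled uniformly by the substitution $f \mapsto \E^{-1}f$, and the time dependence of $\A{\mu;t}$, which is already absorbed into the definition of $\StateTrans{\mu}{t}{s}$.
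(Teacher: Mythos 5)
Your proposal is correct and follows essentially the same route as the paper: the paper defers this proof to \cite[Proposition~4.1]{haasdonk2011efficient} and carries out the identical argument in detail for the adjoint system (error ODE driven by the residual, explicit solution via the state transition operator, triangle inequality, uniform bound by $C_1(\mu)$), which is exactly what you do for the primal system. The only blemish is a harmless sign slip in your error equation (subtracting the reduced from the exact equation yields $+\ResPrim{\mu}{\hat{x}_\mu}{u}(t)$ on the right-hand side, not $-$), which disappears upon taking norms.
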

	\begin{proof}
		See the proof of Proposition~4.1 in~\cite{haasdonk2011efficient}. The proof in~\cite{haasdonk2011efficient} is conducted for the case of a linear time-invariant system but is readily generalized to the setting of linear time-varying systems as considered here, see also the discussion following the proof in~\cite{haasdonk2011efficient}. The proof of this Lemma is also similar to the one for the error estimator of the adjoint system which is presented in detail in~\Cref{lem:error-estimator-adjoint-system}.
	\end{proof}
	Similarly to the error estimator for the primal system, one can also derive an error estimator for the adjoint system:
	
	\begin{lemma}[{A posteriori error estimator for the reduced order model of the adjoint system}]\label{lem:error-estimator-adjoint-system}
		Let~$\mu\in\params$ be a parameter and~$\bar{\varphi}\in\X$ the terminal condition of the adjoint equation, i.e.~$\varphi_\mu(T)=\bar{\varphi}$. Let further $\hat{\varphi}_\mu\in\Had$ be the reduced adjoint state that solves the reduced adjoint equation in~\cref{equ:reduced-adjoint-system} for the terminal condition~$\hat{\varphi}_\mu(T)=\Projection{\spaceVad}\bar{\varphi}$. Then we have at time~$t\in[0,T]$ that
		\begin{align*}
			\normX{\varphi_\mu(t)-\hat{\varphi}_\mu(t)} \ \leq\ \EstAdjo{\mu}{\bar{\varphi}}(t).
		\end{align*}
		The error estimator~$\EstAdjo{\mu}{\bar{\varphi}}$ is defined as
		\begin{align*}
			\EstAdjo{\mu}{\bar{\varphi}}(t) \coloneqq C_1(\mu)\normX{\bar{\varphi} - \Projection{\spaceVad}\bar{\varphi}} + C_1(\mu)\int\limits_0^{T-t} \normDualX{\Ead^{-1}\ResAdjo{\mu}{\hat{\varphi}_\mu}(T-s)}\d{s},
		\end{align*}
		where~$C_1(\mu)$ is a constant such that the condition in~\cref{equ:constant-c1} is fulfilled.
	\end{lemma}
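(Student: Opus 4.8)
The plan is to mirror the structure of the proof of \Cref{lem:error-estimator-primal-system}, but with time reversed, since the adjoint equation \cref{equ:reduced-adjoint-system} runs backwards from $t=T$. First I would introduce the time-reversed variables $\psi_\mu(s) \coloneqq \varphi_\mu(T-s)$ and $\hat{\psi}_\mu(s) \coloneqq \hat{\varphi}_\mu(T-s)$ for $s\in[0,T]$. Under this substitution the adjoint equation $-\Ead\frac{d}{dt}\varphi_\mu(t) = \A{\mu;t}^*\varphi_\mu(t)$ becomes a forward-in-time equation $\Ead\frac{d}{ds}\psi_\mu(s) = \A{\mu;T-s}^*\psi_\mu(s)$ with initial condition $\psi_\mu(0)=\bar\varphi$, and similarly for the reduced quantities with the Petrov--Galerkin residual $\ResAdjo{\mu}{\hat{\varphi}_\mu}$ appearing as a forcing term. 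This casts the problem into exactly the same form treated in \cite{haasdonk2011efficient} for the primal system.

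Next I would write the error equation for $e(s) \coloneqq \psi_\mu(s) - \hat{\psi}_\mu(s)$. Subtracting the (reversed) reduced adjoint equation from the reversed full adjoint equation, one obtains $\Ead\frac{d}{ds}e(s) = \A{\mu;T-s}^*e(s) - \ResAdjo{\mu}{\hat{\varphi}_\mu}(T-s)$ with initial error $e(0) = \bar\varphi - \Projection{\spaceVad}\bar\varphi$ (using that $\hat{\varphi}_\mu(T)=\Projection{\spaceVad}\bar\varphi$). Applying $\Ead^{-1}$ and using the variation-of-constants / Duhamel formula with the state transition operator of the homogeneous reversed adjoint system --- which, by definition of $\StateTransSign$ applied to the operator pair $(\Ead, \A{\mu;t}^*)$, relates to $\StateTrans{\mu}{\cdot}{\cdot}$ --- gives an integral representation of $e$ in terms of the initial error and an integral of $\Ead^{-1}\ResAdjo{\mu}{\hat{\varphi}_\mu}$. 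Taking $\normX{\,\cdot\,}$, pulling the norm inside the integral, and bounding every transition operator by the constant $C_1(\mu)$ from \cref{equ:constant-c1} yields
\begin{align*}
	\normX{e(s)} \leq C_1(\mu)\normX{\bar\varphi - \Projection{\spaceVad}\bar\varphi} + C_1(\mu)\int_0^s \normDualX{\Ead^{-1}\ResAdjo{\mu}{\hat{\varphi}_\mu}(T-\sigma)}\d{\sigma}.
\end{align*}
Reverting to the original time variable via $s = T-t$ gives precisely $\normX{\varphi_\mu(t)-\hat{\varphi}_\mu(t)} \leq \EstAdjo{\mu}{\bar\varphi}(t)$.

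The main obstacle I anticipate is a bookkeeping one rather than a conceptual one: one must verify that the relevant state transition operator for the reversed adjoint system is controlled by the \emph{same} constant $C_1(\mu)$ as for the primal system, i.e.\ that $\sup_{t,s}\normX{\Psi}$ over the adjoint dynamics does not exceed the bound in \cref{equ:constant-c1}. This should follow because $\Ead = \Riesz{\X}\E^*\Riesz{\X}^{-1}$ and the adjoint operator $\A{\mu;t}^*$ are, up to the Riesz isometry, the adjoints of the primal data, so the adjoint transition operator is (the Riesz-conjugate of) the backward-in-time primal transition operator, whose norm on $\X$ equals that of the corresponding forward operator; one has to be careful that $\StateTransSign$ in the Lemma statement is implicitly understood for the $(\Ead,\A{\mu;t}^*)$ pair and that the Riesz map being an isometry makes the $\normX{\cdot}$ and $\normDualX{\cdot}$ bookkeeping consistent. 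A secondary technical point is justifying the Duhamel formula and the interchange of norm and integral in the present $L^2$--$H^1$ Bochner-space setting, which is standard but should be mentioned. Alternatively, one can bypass the reversal entirely and cite \cite[Proposition~4.1]{haasdonk2011efficient} directly after the substitution, remarking that the structure is identical; I would present the short self-contained argument since the Lemma statement promises it is "presented in detail."
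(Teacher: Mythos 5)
Your proposal is correct and follows essentially the same route as the paper: form the error ODE by subtracting the residual-perturbed reduced equation from the full adjoint equation, write its explicit (Duhamel) solution via the transition operator of the adjoint dynamics --- which the paper expresses directly as $\Riesz{\X}^{-1}\StateTrans{\mu}{\cdot}{\cdot}^*$, so that the Riesz isometry and the equality of adjoint operator norms give the same bound $C_1(\mu)$, exactly the point you flag --- and conclude by the triangle inequality. Your explicit time-reversal substitution is just a different bookkeeping of the same argument, which the paper handles implicitly through the $T-t$ and $T-s$ arguments in its solution formula.
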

	\begin{proof}
		The a posteriori error estimate for the adjoint system is a special case of the a posteriori error estimate for the primal system where one has to take into account the reversal of time as well as the error in the terminal instead of the initial condition. For completeness and since we use the same technique for the proof of the a posteriori error estimator of the fully reduced model (see~\Cref{thm:a-posteriori-estimate-fully-reduced-model}) below as well in a more complex version, we shortly recall the main steps of the proof here. The proof follows along the lines of the proof of Proposition~4.1 in~\cite{haasdonk2011efficient}.
		\par
		From the definition of the residual of the adjoint equation, one immediately sees that for all~$t\in[0,T]$ the relation
		\begin{align*}
			\Ead\frac{d}{dt}\hat{\varphi}_\mu(t) = -\A{\mu;t}^*\hat{\varphi}_\mu(t)-\ResAdjo{\mu}{\hat{\varphi}_\mu}(t)
		\end{align*}
		holds. By subtracting this equation from the high-fidelity adjoint equation in~\cref{equ:adjoint-system}, we obtain an ordinary differential equation for the error of the form
		\begin{align*}
			\Ead\frac{d}{dt}\ErrAdjo{\varphi_\mu}{\hat{\varphi}_\mu}(t) = -\A{\mu;t}^*\ErrAdjo{\varphi_\mu}{\hat{\varphi}_\mu}(t)+\ResAdjo{\mu}{\hat{\varphi}_\mu}(t),
		\end{align*}
		where~$\varphi_\mu\in\H$ denotes the solution of~\cref{equ:adjoint-system}. This differential equation has the explicit solution
		\begin{align*}
			\ErrAdjo{\varphi_\mu}{\hat{\varphi}_\mu}(t) = \Riesz{\X}^{-1}\StateTrans{\mu}{T}{t}^*\ErrAdjo{\varphi_\mu}{\hat{\varphi}_\mu}(T)+\int\limits_{0}^{T-t}\Riesz{\X}^{-1}\StateTrans{\mu}{T-t}{s}^*\Ead^{-1}\ResAdjo{\mu}{\hat{\varphi}_\mu}(T-s)\d{s}.
		\end{align*}
		Considering the norm of the error, using the triangle inequality and inserting~$\ErrAdjo{\varphi_\mu}{\hat{\varphi}_\mu}(T)=\bar{\varphi} - \Projection{\spaceVad}\bar{\varphi}$, we obtain the claimed error bound.
	\end{proof}
	\begin{remark}[Numerical approximation of the integrals in the error estimators]
		In practice, the integrals in the definitions of the error estimators need to be approximated using appropriate quadrature rules. For simplicity, we do not take into account the error introduced by the quadrature rule in this paper.
	\end{remark}
	To obtain error estimators that can be evaluated efficiently, we discuss in the following how to compute the norm of the residuals with complexity independent of the dimension of the (high-dimensional) state space~$\X$.
	
	\subsubsection*{Offline-online decomposition of the error estimators}
	In order to obtain an offline-online decomposition, we will consider the square of the respective norms and present their parameter-separable decomposition. To compute the final error estimate, one has to take the square root which might lead to rounding errors for errors close to machine precision, see for instance~\cite{buhr2014numerically} for a similar problem with an error estimator for parametrized elliptic~PDEs. This issue constitutes a potential drawback of the approach.
	\par
	Using the separability of the system components, it is immediately clear that both the residual of the primal and the residual of the adjoint system are parameter-separable as well, i.e.~there exist parameter- and time-independent operators~$R^{\mathrm{pr},q}\in\Lin{\spaceVpr\times\U}{\dualX}$ and continuous functions~$\theta_{R^\mathrm{pr}}^q\colon\params\times[0,T]\to\R$ for~$q=1,\dots,Q_{R^\mathrm{pr}}$ such that
	\begin{align*}
		\ResPrim{\mu}{\hat{x}}{u}(t) = \sum\limits_{q=1}^{Q_{R^\mathrm{pr}}} \theta_{R^\mathrm{pr}}^q(\mu;t)R^{\mathrm{pr},q}(\hat{x}(t),u(t))
	\end{align*}
	and similarly for the adjoint residual. We can therefore rewrite the squared norm of the operator~$\E^{-1}$ applied to the primal residual~$\ResPrim{\mu}{\hat{x}}{u}(t)$ for~$\mu\in\params$, $\hat{x}\in\Hpr$, $u\in\G$ and~$t\in[0,T]$ as
	\begin{align*}
		\normDualX{\E^{-1}\ResPrim{\mu}{\hat{x}}{u}(t)}^2 &= \innerProd{\E^{-1}\ResPrim{\mu}{\hat{x}}{u}(t)}{\E^{-1}\ResPrim{\mu}{\hat{x}}{u}(t)}_{\dualX} \\
		&= \sum\limits_{p,q=1}^{Q_{R^\mathrm{pr}}} \theta_{R^\mathrm{pr}}^p(\mu;t)\theta_{R^\mathrm{pr}}^q(\mu;t)\innerProd{\E^{-1}R^{\mathrm{pr},p}(\hat{x}(t),u(t))}{\E^{-1}R^{\mathrm{pr},q}(\hat{x}(t),u(t))}_{\dualX}.
	\end{align*}
	The quantities~$\innerProd{\E^{-1}R^{\mathrm{pr},p}(\,\cdot\,,\,\cdot\,)}{\E^{-1}R^{\mathrm{pr},q}(\,\cdot\,,\,\cdot\,)}_{\dualX}$ can be precomputed during the offline phase since they are parameter-independent. The norm of the error in the initial condition can be decomposed in a similar way. The full offline-online decomposition of the adjoint residual can be performed equivalently as for the error estimator of the primal system. Details on the offline-online decomposition in the finite-dimensional case are provided in~\Cref{app:details-offline-online-decompositions-primal-and-adjoint}.
	
	\section{Model order reduction for final time optimal adjoint states}\label{sec:reduced-model-optimal-adjoint-states}
	In~\Cref{sec:optimality-system} we have already seen the importance of the optimal final time adjoint state~$\optFTA{\mu}$ for solving the linear-quadratic optimal control problem from~\Cref{sec:parametrized-linear-quadratic-optimal-control-problems}. We describe in this section how to compute a reduced order model to approximate the manifold~$\SolutionManifold$ of optimal final time adjoint states defined in~\cref{equ:manifold}. The starting point will be the characterization of the optimal final time adjoint as solution of the linear system stated in~\cref{equ:linear-system-optimal-final-time-adjoint}. For an approximate final time adjoint state, the residual of the aforementioned linear system will serve as an a posteriori error estimator that is discussed in~\Cref{sec:a-posteriori-rom-for-optimal-adjoint-states}. The reduced order model described in the following section provides a certified approximation of the optimal final time adjoint state. However, as we will see the reduction of the optimal final time adjoint states alone will not lead to a fully efficient reduced scheme since it still involves high-fidelity computations. For more details on the reduced order model described in the following, we refer to~\cite[Section~3]{kleikamp2024greedy}.
	
	\subsection{Projection-based reduced basis model order reduction}\label{sec:reduced-basis-mor-for-final-time-adjoints}
	The approach presented in this section was first proposed in~\cite{lazar2016greedy} and recently transferred to the time-invariant case in~\cite{kleikamp2024greedy}. Its main idea is to build a reduced subspace~$\spaceVred{\dimRedSpaceFTA}\subset\X$ of dimension~$\dimRedSpaceFTA\coloneqq\dim{\spaceVred{\dimRedSpaceFTA}}$ with~$\dimRedSpaceFTA\ll\dim{\X}$ to approximate the manifold~$\SolutionManifold$ of optimal final time adjoint states~$\optFTA{\mu}$ over the parameter set~$\params$. We will later describe how to compute such a reduced space~$\spaceVred{\dimRedSpaceFTA}$ efficiently. For the moment we assume that we are given a reduced space~$\spaceVred{\dimRedSpaceFTA}\subset\X$ and show how to compute an approximate final time adjoint~$\redFTA{\mu}{\dimRedSpaceFTA}\in\spaceVred{\dimRedSpaceFTA}$ for a parameter~$\mu\in\params$. Since the optimal final time adjoint~$\optFTA{\mu}$ can be characterized as solution of the linear system in~\cref{equ:linear-system-optimal-final-time-adjoint}, the approximation~$\redFTA{\mu}{\dimRedSpaceFTA}\approx\optFTA{\mu}$ should fulfill the same linear system as well as possible. Motivated by this idea, we define~$\redFTA{\mu}{\dimRedSpaceFTA}$ as
	\begin{align}\label{equ:least-squares-for-approximate-final-time-adjoint}
		\redFTA{\mu}{\dimRedSpaceFTA} \coloneqq \argmin\limits_{p\in \spaceVred{\dimRedSpaceFTA}}\normX{\MxNaughtMinusxT - \big(\EplusMtimesGramian\big)p}^2.
	\end{align}
	Hence, we can view~$\redFTA{\mu}{\dimRedSpaceFTA}\in\spaceVred{\dimRedSpaceFTA}$ as the least-squares solution of the linear system~\cref{equ:linear-system-optimal-final-time-adjoint} in the space~$\spaceVred{\dimRedSpaceFTA}$. To compute~$\redFTA{\mu}{\dimRedSpaceFTA}$ in practice, we characterize~$\redFTA{\mu}{\dimRedSpaceFTA}$ via the equation
	\begin{align*}
		\big(\EplusMtimesGramian\big)\redFTA{\mu}{\dimRedSpaceFTA} = \Projection{\big(\EplusMtimesGramian\big)\spaceVred{\dimRedSpaceFTA}}\MxNaughtMinusxT.
	\end{align*}
	One therefore needs to compute~$\big(\EplusMtimesGramian\big)\spaceVred{\dimRedSpaceFTA}$ which is computationally expensive due to the involved primal and adjoint systems. To be more precise, the primal and adjoint system have to be solved~$\dimRedSpaceFTA$ times, once for each element of a basis of the reduced space~$\spaceVred{\dimRedSpaceFTA}$. The complexity for solving these systems still scales with the dimension of the (usually high-dimensional) state space~$\X$. This issue limits the speedup possible by applying the reduced order model described in this section. We will therefore later improve the efficiency of the approach in~\Cref{sec:fully-reduced-model} by also applying a reduction of the primal and adjoint systems as depicted in~\Cref{sec:reduced-model-system-dynamics}. Before doing so, we present a reliable a posteriori error estimator for an approximate final time adjoint state by considering the residual of the linear system in~\cref{equ:linear-system-optimal-final-time-adjoint} for the optimal final time adjoint state.
	
	\subsection{A posteriori error estimation}\label{sec:a-posteriori-rom-for-optimal-adjoint-states}
	For a parameter~$\mu\in\params$ and an approximate final time adjoint~$p\in\X$, the error estimator~$\EstFTA{\mu}{\dimRedSpaceFTA}\colon\X\to[0,\infty)$ is defined as
	\begin{align}\label{equ:a-posteriori-error-estimator-reduced-optimal-adjoint}
		\EstFTA{\mu}{\dimRedSpaceFTA}(p) \coloneqq \const\normX{\MxNaughtMinusxT-\big(\EplusMtimesGramian\big)p},
	\end{align}
	where~$\const>0$ is chosen such that
	\begin{align*}
		\const \geq \norm{\big(\EplusMtimesGramian\big)^{-1}}_{\Lin{\X}{\X}}
	\end{align*}
	for all parameters~$\mu\in\params$. We assume here and in the following that such a constant~$\const$ exists that uniformly bounds the operator norm of~$\big(\EplusMtimesGramian\big)^{-1}$ over the parameter space. It is easy to see that the error estimator~$\EstFTA{\mu}{\dimRedSpaceFTA}$ is a reliable estimator, which we state for later reference in the following Lemma.
	\begin{lemma}[A posteriori error estimator for the reduced order model of the final time adjoint states]\label{lem:error-estimator-final-time-adjoint-states}
		For all parameters~$\mu\in\params$ and all approximate final time adjoint states~$p\in\X$ we have
		\begin{align*}
			\normX{\optFTA{\mu}-p}\ \leq\ \EstFTA{\mu}{\dimRedSpaceFTA}(p).
		\end{align*}
	\end{lemma}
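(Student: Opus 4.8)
The plan is to reduce the claim to a single elementary estimate by exploiting the characterization of the optimal final time adjoint state provided by~\Cref{lem:linear-system-optimal-final-time-adjoint}. Abbreviate the operator on the left-hand side of~\cref{equ:linear-system-optimal-final-time-adjoint} by writing~$G_\mu \coloneqq \EplusMtimesGramian \in \Lin{\X}{\X}$ and the right-hand side by~$b_\mu \coloneqq \MxNaughtMinusxT \in \X$. By assumption~$G_\mu$ is invertible for every~$\mu \in \params$, and~\Cref{lem:linear-system-optimal-final-time-adjoint} states precisely that~$G_\mu \optFTA{\mu} = b_\mu$, i.e.~$\optFTA{\mu} = G_\mu^{-1} b_\mu$.

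Next I would subtract the approximation and pull out the inverse operator. For an arbitrary~$p \in \X$ one writes
\begin{align*}
	\optFTA{\mu} - p = G_\mu^{-1} b_\mu - p = G_\mu^{-1}\big(b_\mu - G_\mu p\big),
\end{align*}
where the second equality uses only that~$G_\mu G_\mu^{-1} = \eye{\X}$. Taking the~$\X$-norm on both sides and applying the submultiplicativity of the operator norm gives
\begin{align*}
	\normX{\optFTA{\mu} - p} \leq \norm{G_\mu^{-1}}_{\Lin{\X}{\X}} \normX{b_\mu - G_\mu p} \leq \const \normX{b_\mu - G_\mu p},
\end{align*}
where the last step invokes the standing assumption that~$\const \geq \norm{G_\mu^{-1}}_{\Lin{\X}{\X}}$ uniformly in~$\mu \in \params$. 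Recalling the definition of~$b_\mu$, $G_\mu$, and the estimator~$\EstFTA{\mu}{\dimRedSpaceFTA}$ in~\cref{equ:a-posteriori-error-estimator-reduced-optimal-adjoint}, the right-hand side is exactly~$\EstFTA{\mu}{\dimRedSpaceFTA}(p)$, which yields the claim.

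There is essentially no hard step here: the argument is the standard residual-based a posteriori bound for a linear operator equation, and every ingredient (invertibility of~$G_\mu$, the linear-system characterization of~$\optFTA{\mu}$, the uniform bound defining~$\const$) has already been established or assumed in the excerpt. The only point worth stating carefully is that the estimate holds for \emph{every}~$p \in \X$, not merely for~$p = \redFTA{\mu}{\dimRedSpaceFTA}$, which is immediate from the derivation since~$p$ was arbitrary; this generality is what makes the estimator usable as a certification tool regardless of how the approximation is produced.
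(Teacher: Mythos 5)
Your proposal is correct and follows essentially the same argument as the paper: both write~$\optFTA{\mu}-p$ as the inverse operator applied to the residual~$\MxNaughtMinusxT-\big(\EplusMtimesGramian\big)p$ using the characterization from~\Cref{lem:linear-system-optimal-final-time-adjoint}, and then bound by the operator norm of the inverse via the constant~$\const$. No gaps.
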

	\begin{proof}
		It holds
		\begin{align*}
			\normX{\optFTA{\mu}-p} &= \normX{\big(\EplusMtimesGramian\big)^{-1}\big(\EplusMtimesGramian\big)(\optFTA{\mu}-p)} \\
			&\leq \norm{\big(\EplusMtimesGramian\big)^{-1}}_{\Lin{\X}{\X}}\normX{\big(\EplusMtimesGramian\big)(\optFTA{\mu}-p)} \\
			&\leq \const\normX{\MxNaughtMinusxT-\big(\EplusMtimesGramian\big)p} \\
			&= \EstFTA{\mu}{\dimRedSpaceFTA}(p).
		\end{align*}
	\end{proof}
	Additionally, one can prove that~$\EstFTA{\mu}{\dimRedSpaceFTA}$ is efficient, i.e.~the overestimation of the error produced by the estimator can be bounded by a constant given as the condition number of the operator~$\EplusMtimesGramian$. From the definition of the error estimator~$\EstFTA{\mu}{\dimRedSpaceFTA}$ it furthermore follows directly that the approximate final time adjoint state~$\redFTA{\mu}{\dimRedSpaceFTA}$ is given as~$\redFTA{\mu}{\dimRedSpaceFTA}=\argmin_{p\in \spaceVred{\dimRedSpaceFTA}}\EstFTA{\mu}{\dimRedSpaceFTA}(p)$. Evaluating the error estimator for the approximate final time adjoint, i.e.~computing~$\EstFTA{\mu}{\dimRedSpaceFTA}(\redFTA{\mu}{\dimRedSpaceFTA})$, results in a reliable and efficient estimate for the distance of~$\optFTA{\mu}$ to the reduced space~$\spaceVred{\dimRedSpaceFTA}$, see~\cite[Theorem~3.2]{kleikamp2024greedy}. This result enables the efficient construction of the reduced space~$\spaceVred{\dimRedSpaceFTA}$ using greedy methods, see for instance~\cite{cohen2016kolmogorov,buffa2012priori}.
	\par
	To evaluate the error estimator~$\EstFTA{\mu}{\dimRedSpaceFTA}$ for an approximate adjoint state~$p\in\X$, we mainly have to compute~$\big(\EplusMtimesGramian\big)p$ where we again make use of~\cref{equ:gramian-product}. However, these computations still involve high-dimensional quantities and thus their complexity again depends on the state space~$\X$. This means in particular that even the error estimator cannot be evaluated efficiently with the runtime only depending on the dimension~$\dimRedSpaceFTA$ of the reduced space. Similar to the online phase of the reduced model, we will also speed up the error estimator by reducing the primal and adjoint systems. We will further describe in~\Cref{sec:efficient-a-posteriori-fully-reduced} how to take the additional approximation error of the system dynamics into account to obtain a reliable reduced error estimator that can be evaluated very fast.

	\section{Efficient and fully reduced model for parametrized optimal control problems}\label{sec:fully-reduced-model}
	In the following, we combine the reduction strategies presented in the previous two sections. We hence build a reduced approximation for the state and the adjoint system and additionally
	perform a reduction of the manifold of optimal final time adjoint states. Furthermore, we derive a reliable a posteriori error estimator for this combined reduced model and show its efficient implementation using an offline-online decomposition. We also discuss several strategies for constructing the involved reduced bases. Numerical evaluations of the reduced models and the strategies are given in~\Cref{sec:numerical-experiments}.
	\par
	At this point, we would like to emphasize that combining both approaches has several advantages compared to an individual application of the two reduced models. First of all, the resulting fully reduced model can be solved with complexity independent of the state dimension. This is also true when using the reduced dynamical systems to solve a projected version of the optimal control problem. However, the combination of the approaches allows for an additional reduction of the optimal final time adjoint states and furthermore facilitates the efficient evaluation of a reliable error estimator for the fully reduced model. As we shall see in the numerical experiments, this additional reduction step of constructing a reduced basis for the final time adjoint states results in an additional acceleration of the method while maintaining its accuracy.
	
	\subsection{Fully reduced model by combining system and final time adjoint reduction}\label{sec:description-fully-reduced-model}
	To obtain a fully reduced model that can be evaluated independently of the dimension of the state space, we replace the solution of every primal and adjoint system that occurs in the online phase of the reduced order model for the final time adjoint state, see~\Cref{sec:reduced-basis-mor-for-final-time-adjoints}, by a respective reduced system as shown in~\Cref{sec:reduced-basis-mor-for-system-dynamics}. To this end, assume that we are given reduced spaces~$\spaceVpr\subset\X$, $\spaceWpr\subset\X$, $\spaceVad\subset\X$, and~$\spaceWad\subset\X$ for the system reduction and a reduced space~$\spaceVred{\dimRedSpaceFTA}\subset\X$ for the final time adjoint reduction. Then, the fully reduced model for a given parameter~$\mu\in\params$ is defined as follows: Similar to~\Cref{sec:reduced-basis-mor-for-final-time-adjoints}, we define the fully reduced solution~$\compredFTA{\mu}{\dimRedSpaceFTA}\in\spaceVred{\dimRedSpaceFTA}$ as
	\begin{align}\label{equ:def-fully-reduced-final-time-adjoint}
		\compredFTA{\mu}{\dimRedSpaceFTA} \coloneqq \argmin\limits_{p\in\spaceVred{\dimRedSpaceFTA}}\normX{\RedMxNaughtMinusxT-\left(\EplusMtimesRedGramian\Projection{\spaceVad}\right)p}^2.
	\end{align}
	This least squares problem can be solved as before by~orthogonal projection in~$\X$, i.e.~we have to solve the linear system
	\begin{align*}
		\left(\EplusMtimesRedGramian\Projection{\spaceVad}\right)\compredFTA{\mu}{\dimRedSpaceFTA} = \Projection{\left(\EplusMtimesRedGramian\Projection{\spaceVad}\right)\spaceVred{\dimRedSpaceFTA}}\RedMxNaughtMinusxT.
	\end{align*}
	The required components can be precomputed offline in such a way that it is sufficient to solve a reduced homogeneous problem for~$\RedPrStateTrans{\mu}{T}{0}\Projection{\spaceVpr}x_\mu^0$ and~$\dimRedSpaceFTA$ reduced adjoint and primal problems to compute~$\RedGramian{\mu}\Projection{\spaceVad}\spaceVred{\dimRedSpaceFTA}$ online, where~$\Projection{\spaceVad}\spaceVred{\dimRedSpaceFTA}\subset\spaceVad$ is also precomputed. The computational costs consist of the time required to solve the final linear system which can be represented as a linear system of size~$\dimRedSpaceFTA\times\dimRedSpaceFTA$, the costs for assembling all system components, and the time for solving the reduced primal and adjoint systems for a total of~$\dimRedSpaceFTA$~times. The required computations are in particular independent of the dimension of the state space since only reduced systems need to be solved. More details on the solution of the fully reduced model in the finite-dimensional case are given in~\Cref{app:solution-fully-reduced-model}.
	\par
	We observe at this point that the reduction of the adjoint system dynamics already implies a reduction of the set of possible final time adjoint states. To be more precise, when considering the reduced space~$\spaceVad$ for the adjoint trajectory, the set of final time adjoint states is already restricted to the~$\kad$-dimensional space~$\spaceVad$. Hence, an additional reduction using the subspace~$\spaceVred{\dimRedSpaceFTA}$ is only sensible if~$\dimRedSpaceFTA<\kad$ and if it holds~$\spaceVred{\dimRedSpaceFTA}\subsetneq\spaceVad$. We do not assume or strictly enforce these properties, but instead develop the error estimator for the general formulation stated in the last paragraph. The further reduction of the final time adjoint states can be particularly useful if the adjoint system dynamics require a larger reduced basis to be captured accurately compared to the manifold of optimal final time adjoint states.
	
	\subsection{Efficiently computable a posteriori error estimate}\label{sec:efficient-a-posteriori-fully-reduced}
	In this section we derive an a posteriori error estimator for the fully reduced model. We prove its reliability and show how to evaluate the error estimator efficiently during the online phase by deriving a full offline-online decomposition of all involved quantities.
	
	\subsubsection{Definition and reliability of the error estimator}
	For a parameter~$\mu\in\params$ and reduced solution~$\varphi^\dimRedSpaceFTA\in\spaceVred{\dimRedSpaceFTA}\subset\X$, we define the error estimator
	\begin{align}\label{equ:a-posteriori-error-estimator-fully-reduced-model}
		\FullRedEstFTA{\mu}{\dimRedSpaceFTA}(\varphi^\dimRedSpaceFTA) \coloneqq \const\big(\norm{\Mop}_{\Lin{\X}{\dualX}}\EstPrim{\mu}{0}(T) + \RedEstFTA{\mu}{\dimRedSpaceFTA}(\varphi^\dimRedSpaceFTA) + \norm{\Mop}_{\Lin{\X}{\dualX}}\EstGram{\mu}{\varphi^\dimRedSpaceFTA}\big).
	\end{align}
	Here, the error estimator~$\EstPrim{\mu}{u}$ for the primal system was defined in~\Cref{lem:error-estimator-primal-system}. The reduced error estimator~$\RedEstFTA{\mu}{\dimRedSpaceFTA}(\varphi^\dimRedSpaceFTA)$ for the final time adjoint~$\varphi^\dimRedSpaceFTA$ (which has a similar structure as the original error estimator~$\EstFTA{\mu}{\dimRedSpaceFTA}$ from~\cref{equ:a-posteriori-error-estimator-reduced-optimal-adjoint} but involves only solving reduced systems) is given as
	\begin{align}\label{equ:reduced-estimator-final-time-adjoint}
		\RedEstFTA{\mu}{\dimRedSpaceFTA}(\varphi^\dimRedSpaceFTA) \coloneqq \normX{\RedMxNaughtMinusxT-\left(\EplusMtimesRedGramian\Projection{\spaceVad}\right)\varphi^\dimRedSpaceFTA}.
	\end{align}
	It then holds in particular~$\compredFTA{\mu}{\dimRedSpaceFTA}=\argmin_{p\in\spaceVred{\dimRedSpaceFTA}}\RedEstFTA{\mu}{\dimRedSpaceFTA}(p)$ by combining~\cref{equ:def-fully-reduced-final-time-adjoint} and~\cref{equ:reduced-estimator-final-time-adjoint}. The error estimator~$\EstGram{\mu}{\varphi^\dimRedSpaceFTA}$ for the deviation in the application of the reduced and full Gramian to a vector~$\varphi^\dimRedSpaceFTA\in\spaceVred{\dimRedSpaceFTA}$ is defined as
	\begin{align}\label{equ:def-estimator-gramian}
		\EstGram{\mu}{\varphi^\dimRedSpaceFTA} \coloneqq C_1(\mu)C_2(\mu)\int\limits_0^T\EstAdjo{\mu}{\varphi^\dimRedSpaceFTA}(s)\d{s} + C_1(\mu)\int\limits_0^T\normDualX{\E^{-1}\ResPrim{\mu}{\hat{x}_\mu}{\hat{u}_\mu}(s)}\d{s},
	\end{align}
	where~$(\hat{x}_\mu,\hat{u}_\mu,\hat{\varphi}_\mu)\in\Hpr\times\G\times\Had$ solve the reduced optimality system in~\cref{equ:reduced-primal-system,equ:reduced-control-system,equ:reduced-adjoint-system} with initial condition~$\hat{x}_\mu(0)=0$ and terminal condition~$\hat{\varphi}_\mu(T)=\Projection{\spaceVad}\varphi^\dimRedSpaceFTA$. The constant~$C_2(\mu)$ is chosen such that
	\begin{align*}
		C_2(\mu) \geq \sup\limits_{t\in[0,T]}\norm{\E^{-1}\B{\mu;t}\Rop(t)^{-1}\B{\mu;t}^*}_{\Lin{\X}{\dualX}}.
	\end{align*}
	The second term in~\cref{equ:def-estimator-gramian} corresponds to the residual of the primal equation when computing both the control and the primal solution by means of the reduced adjoint and primal systems, respectively. We then have the following result on the reliability of the error estimator~$\FullRedEstFTA{\mu}{\dimRedSpaceFTA}$:
	\begin{theorem}[A posteriori error estimator for the fully reduced model]\label{thm:a-posteriori-estimate-fully-reduced-model}
		Let~$\mu\in\params$ be a parameter, $\varphi^\dimRedSpaceFTA\in\spaceVred{\dimRedSpaceFTA}$ an element from the reduced space~$\spaceVred{\dimRedSpaceFTA}$ and~$\FullRedEstFTA{\mu}{\dimRedSpaceFTA}$ the error estimator for the fully reduced model defined in~\cref{equ:a-posteriori-error-estimator-fully-reduced-model}. Then, $\FullRedEstFTA{\mu}{\dimRedSpaceFTA}(\varphi^\dimRedSpaceFTA)$ is a reliable estimate of the error between the optimal final time adjoint~$\optFTA{\mu}$ and the approximation~$\varphi^\dimRedSpaceFTA$, i.e.~we have
		\begin{align*}
			\normX{\optFTA{\mu}-\varphi^\dimRedSpaceFTA}\ \leq\ \FullRedEstFTA{\mu}{\dimRedSpaceFTA}(\varphi^\dimRedSpaceFTA).
		\end{align*}
	\end{theorem}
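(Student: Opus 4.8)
The plan is to start from the defining linear system for the optimal final time adjoint, namely $\big(\EplusMtimesGramian\big)\optFTA{\mu} = \MxNaughtMinusxT$ from~\Cref{lem:linear-system-optimal-final-time-adjoint}, and to decompose the error $\normX{\optFTA{\mu}-\varphi^\dimRedSpaceFTA}$ by feeding $\varphi^\dimRedSpaceFTA$ into the \emph{exact} operator $\big(\EplusMtimesGramian\big)$ while the reduced estimator $\RedEstFTA{\mu}{\dimRedSpaceFTA}$ only sees the reduced operator $\big(\EplusMtimesRedGramian\Projection{\spaceVad}\big)$. As in the proof of~\Cref{lem:error-estimator-final-time-adjoint-states}, the first step is to write $\normX{\optFTA{\mu}-\varphi^\dimRedSpaceFTA} \leq \const\normX{\MxNaughtMinusxT-\big(\EplusMtimesGramian\big)\varphi^\dimRedSpaceFTA}$ using the uniform bound $\const$ on the inverse of $\EplusMtimesGramian$. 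The whole task then reduces to estimating the \emph{exact} residual evaluated at $\varphi^\dimRedSpaceFTA$.

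Next I would insert and subtract the reduced residual quantities inside this norm and apply the triangle inequality to split it into three contributions. The first is $\normX{\MxNaughtMinusxT - \RedMxNaughtMinusxT}$, which equals $\normX{\Riesz{\X}^{-1}\Mop\big(\StateTrans{\mu}{T}{0}x_\mu^0 - \RedPrStateTrans{\mu}{T}{0}\Projection{\spaceVpr}x_\mu^0\big)}$; this is bounded by $\norm{\Mop}_{\Lin{\X}{\dualX}}$ times the error in the homogeneous (zero-control) primal trajectory at the final time, which is exactly $\EstPrim{\mu}{0}(T)$ by~\Cref{lem:error-estimator-primal-system}. The second contribution, $\normX{\RedMxNaughtMinusxT - \big(\EplusMtimesRedGramian\Projection{\spaceVad}\big)\varphi^\dimRedSpaceFTA}$, is by definition precisely $\RedEstFTA{\mu}{\dimRedSpaceFTA}(\varphi^\dimRedSpaceFTA)$ from~\cref{equ:reduced-estimator-final-time-adjoint}. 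The third contribution is $\normX{\big(\EplusMtimesGramian\big)\varphi^\dimRedSpaceFTA - \big(\EplusMtimesRedGramian\Projection{\spaceVad}\big)\varphi^\dimRedSpaceFTA} = \norm{\Riesz{\X}^{-1}\Mop\big(\Gramian{\mu}\varphi^\dimRedSpaceFTA - \RedGramian{\mu}\Projection{\spaceVad}\varphi^\dimRedSpaceFTA\big)}_{\X}$, i.e.~$\norm{\Mop}_{\Lin{\X}{\dualX}}$ times the error in applying the full versus the reduced Gramian to $\varphi^\dimRedSpaceFTA$. Collecting the three terms and pulling out $\const$ already matches the structure of~\cref{equ:a-posteriori-error-estimator-fully-reduced-model}, provided the Gramian-application error is bounded by $\EstGram{\mu}{\varphi^\dimRedSpaceFTA}$.

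The main obstacle — and the part deserving a detailed argument — is bounding $\normX{\Gramian{\mu}\varphi^\dimRedSpaceFTA - \RedGramian{\mu}\Projection{\spaceVad}\varphi^\dimRedSpaceFTA}$, i.e.~justifying the definition in~\cref{equ:def-estimator-gramian}. Here one recalls that $\Gramian{\mu}\varphi^\dimRedSpaceFTA = -x_\mu(T)$ where $(x_\mu,u_\mu,\varphi_\mu)$ solves the \emph{full} optimality system with zero initial condition and terminal adjoint $\varphi^\dimRedSpaceFTA$, while $\RedGramian{\mu}\Projection{\spaceVad}\varphi^\dimRedSpaceFTA = -\hat{x}_\mu(T)$ with $(\hat{x}_\mu,\hat{u}_\mu,\hat{\varphi}_\mu)$ solving the \emph{reduced} optimality system with terminal adjoint $\Projection{\spaceVad}\varphi^\dimRedSpaceFTA$. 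The difference $x_\mu(T) - \hat{x}_\mu(T)$ accumulates two sources of error along the forward-in-time direction: the error in the control $u_\mu - \hat{u}_\mu$ (which stems from the error in the adjoint trajectory, since $u_\mu = -\Rop^{-1}\B{\mu}^*\varphi_\mu$ and $\hat{u}_\mu = -\Rop(t)^{-1}\redBad{\mu;t}^*\hat\varphi_\mu$), and the residual of the primal equation when driven by the reduced control. To make this precise I would reuse the variation-of-constants / state-transition representation from the proof of~\Cref{lem:error-estimator-adjoint-system}: write an ODE for $x_\mu - \hat{x}_\mu$ with right-hand side consisting of the primal residual $\ResPrim{\mu}{\hat{x}_\mu}{\hat{u}_\mu}$ plus the control-perturbation term $\B{\mu;t}(u_\mu - \hat{u}_\mu) = \B{\mu;t}\Rop(t)^{-1}\B{\mu;t}^*(\varphi_\mu - \hat{\varphi}_\mu) + (\text{reduction of } \B{\mu;t}^*)$, bound the adjoint error $\normX{\varphi_\mu(s) - \hat\varphi_\mu(s)}$ by $\EstAdjo{\mu}{\varphi^\dimRedSpaceFTA}(s)$ via~\Cref{lem:error-estimator-adjoint-system} (noting that the reduced adjoint solved here has terminal condition $\Projection{\spaceVad}\varphi^\dimRedSpaceFTA$, exactly the setting of that lemma applied with $\bar\varphi = \varphi^\dimRedSpaceFTA$), introduce $C_2(\mu)$ to absorb $\norm{\E^{-1}\B{\mu;t}\Rop(t)^{-1}\B{\mu;t}^*}$, use $C_1(\mu)$ from~\cref{equ:constant-c1} to bound the state transition operator, and integrate. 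The care needed here is to fold the $\B{\mu;t}^*$-versus-$\redBad{\mu;t}^*$ restriction discrepancy correctly into the residual/adjoint-error terms rather than double-counting it; once that bookkeeping is done, integrating over $[0,T]$ yields exactly~\cref{equ:def-estimator-gramian}, and the theorem follows by assembling the three pieces.
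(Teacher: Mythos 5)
Your proposal is correct and follows essentially the same route as the paper's proof: the initial bound via \Cref{lem:error-estimator-final-time-adjoint-states}, the three-term triangle-inequality splitting, and the variation-of-constants argument with $C_1(\mu)$, $C_2(\mu)$ and \Cref{lem:error-estimator-adjoint-system} for the Gramian error. The only bookkeeping point you flag as open resolves immediately: since $\redBad{\mu;s}^*=\restricted{\B{\mu;s}^*}{\spaceVad}$ and $\hat{\varphi}_\mu(s)\in\spaceVad$, one has $\redBad{\mu;s}^*\hat{\varphi}_\mu(s)=\B{\mu;s}^*\hat{\varphi}_\mu(s)$, so no extra restriction-discrepancy term appears and the control difference reduces cleanly to $-\Rop(s)^{-1}\B{\mu;s}^*\bigl(\varphi_\mu(s)-\hat{\varphi}_\mu(s)\bigr)$.
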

	\begin{proof}
		We first observe that we can bound the reconstruction error using the error estimate from~\Cref{lem:error-estimator-final-time-adjoint-states} in the following way, where we also added some appropriate reduced terms, applied the triangle inequality several times and made use of the definition of the operator norm:
		\begin{align*}
			\normX{\optFTA{\mu}-\Vred\varphi^\dimRedSpaceFTA}&\leq\EstFTA{\mu}{\dimRedSpaceFTA}(\Vred\varphi^\dimRedSpaceFTA)\\
			&=\const\normX{\MxNaughtMinusxT-(\EplusMtimesGramian)\varphi^\dimRedSpaceFTA} \\
			&=\const\left\lVert \MxNaughtMinusxT-\RedMxNaughtMinusxT\right.\\
			&\hphantom{===}\left.+\RedMxNaughtMinusxT-(\EplusMtimesRedGramian\Projection{\spaceVad})\varphi^\dimRedSpaceFTA\right.\\
			&\hphantom{===}\left.+(\EplusMtimesRedGramian\Projection{\spaceVad})\varphi^\dimRedSpaceFTA-(\EplusMtimesGramian)\varphi^\dimRedSpaceFTA\right\rVert_{\dualX} \\
			&\leq\const\left(\norm{\Mop}_{\Lin{\X}{\dualX}}\normX{\StateTrans{\mu}{T}{0}x_\mu^0-\Vpr \RedPrStateTrans{\mu}{T}{0}\Projection{\spaceVpr}x_\mu^0}\right.\\
			&\hphantom{===}+\normX{\RedMxNaughtMinusxT-(\EplusMtimesRedGramian\Projection{\spaceVad})\varphi^\dimRedSpaceFTA}\\
			&\hphantom{===}\left.+\norm{\Mop}_{\Lin{\X}{\dualX}}\normX{(\RedGramian{\mu}\Projection{\spaceVad}-\Gramian{\mu})\varphi^\dimRedSpaceFTA}\right) \\
			&\stackrel{(\ast)}{\leq} \const\left(\norm{\Mop}_{\Lin{\X}{\dualX}}\EstPrim{\mu}{0}(T) + \RedEstFTA{\mu}{\dimRedSpaceFTA}(\varphi^\dimRedSpaceFTA) + \norm{\Mop}_{\Lin{\X}{\dualX}}\EstGram{\mu}{\varphi^\dimRedSpaceFTA}\right)\\
			&=\FullRedEstFTA{\mu}{\dimRedSpaceFTA}(\varphi^\dimRedSpaceFTA).
		\end{align*}
		It is left to prove that the inequality
		\begin{align}\label{equ:estimate-gramian-error}
			\normX{(\RedGramian{\mu}\Projection{\spaceVad}-\Gramian{\mu})\varphi^\dimRedSpaceFTA} \leq \EstGram{\mu}{\varphi^\dimRedSpaceFTA}
		\end{align}
		used in~$(\ast)$ holds. All other estimates follow form the definition of~$\RedEstFTA{\mu}{\dimRedSpaceFTA}(\varphi^\dimRedSpaceFTA)$ and from~\Cref{lem:error-estimator-primal-system} for~$\EstPrim{\mu}{0}(T)$.
		\par
		To prove the estimate~\cref{equ:estimate-gramian-error}, we first use that
		\begin{align*}
			\Gramian{\mu}\varphi^\dimRedSpaceFTA = -x_\mu(T)
		\end{align*}
		according to~\cref{equ:gramian-product}, where~$(x_\mu,u_\mu,\varphi_\mu)\in\H\times\G\times\H$ solve the optimality system in~\cref{equ:primal-system,equ:control-system,equ:adjoint-system} with initial condition~$x_\mu(0)=0$ and terminal condition~$\varphi_\mu(T)=\varphi^\dimRedSpaceFTA$. Similarly, it holds for the reduced Gramian that
		\begin{align*}
			\RedGramian{\mu}\Projection{\spaceVad}\varphi^\dimRedSpaceFTA = -\hat{x}_\mu(T),
		\end{align*}
		according to~\cref{equ:reduced-gramian-product}, where~$(\hat{x}_\mu,\hat{u}_\mu,\hat{\varphi}_\mu)\in\Hpr\times\G\times\Had$ solve the reduced optimality system in~\cref{equ:reduced-control-system,equ:reduced-control-system,equ:reduced-adjoint-system} with initial condition~$\hat{x}_\mu(0)=0$ and terminal condition~$\hat{\varphi}_\mu(T)=\Projection{\spaceVad}\varphi^\dimRedSpaceFTA$. However, due to the different controls~$u_\mu$ and~$\hat{u}_\mu$ used to determine the primal states~$x_\mu(T)$ and~$\hat{x}_\mu(T)$, we cannot directly apply the a posteriori error estimate for the primal system from~\Cref{lem:error-estimator-primal-system} here. Instead, we have to take the difference in the controls into account. 
		We would then like to bound~$\normX{x_\mu(T)-\hat{x}_\mu(T)}=\normX{\ErrPrim{x_\mu}{\hat{x}_\mu}(T)}$ by~$\EstGram{\mu}{\varphi^\dimRedSpaceFTA}$ to finish the proof. To this end, we observe that
		\begin{align}\label{equ:ode-for-hat-x-with-residual}
			\E\frac{d}{dt}\hat{x}_\mu(t) = \A{\mu;t}\hat{x}_\mu(t)+\B{\mu;t}\hat{u}_\mu(t)-\ResPrim{\mu}{\hat{x}_\mu}{\hat{u}_\mu}(t)
		\end{align}
		for
		\begin{align*}
			\ResPrim{\mu}{\hat{x}_\mu}{\hat{u}_\mu}(t) \coloneqq \A{\mu;t}\hat{x}_\mu(t)+\B{\mu;t}\hat{u}_\mu(t)-\E\frac{d}{dt}\hat{x}_\mu(t).
		\end{align*}
		Subtracting equation~\cref{equ:ode-for-hat-x-with-residual} from the primal equation~\cref{equ:primal-system} yields
		\begin{align*}
			\E\frac{d}{dt}\ErrPrim{x_\mu}{\hat{x}_\mu}(t) = \A{\mu;t}\ErrPrim{x_\mu}{\hat{x}_\mu}(t)+\B{\mu;t}\left[u_\mu(t)-\hat{u}_\mu(t)\right]+\ResPrim{\mu}{\hat{x}_\mu}{\hat{u}_\mu}(t).
		\end{align*}
		Together with~$\ErrPrim{x_\mu}{\hat{x}_\mu}(0)=0$ (since~$x_\mu(0)=\hat{x}_\mu(0)=0\in\X$), we obtain the analytic solution of this ordinary differential equation as
		\begin{align*}
			\ErrPrim{x_\mu}{\hat{x}_\mu}(t) = \int\limits_{0}^{t}\StateTrans{\mu}{t}{s}\Riesz{\X}^{-1}\E^{-1}\Big[\B{\mu;s}\left[u_\mu(s)-\hat{u}_\mu(s)\right]+\ResPrim{\mu}{\hat{x}_\mu}{\hat{u}_\mu}(s)\Big]\d{s}.
		\end{align*}
		This results in the estimate
		\begin{align}\label{equ:estimate-norm-primal-error-proof-gramian-estimate}
			\normX{\ErrPrim{x_\mu}{\hat{x}_\mu}(T)} \leq C_1(\mu)\int\limits_{0}^{T}\normDualX{\E^{-1}\B{\mu;s}\left[u_\mu(s)-\hat{u}_\mu(s)\right]}+\normDualX{\E^{-1}\ResPrim{\mu}{\hat{x}_\mu}{\hat{u}_\mu}(s)}\d{s}.
		\end{align}
		It remains to bound the difference in the control. We observe that there holds
		\begin{equation}\label{equ:estimate-control-proof-gramian-estimate}
			\begin{aligned}
				\normDualX{\E^{-1}\B{\mu;s}\left[u_\mu(s)-\hat{u}_\mu(s)\right]} &= \normDualX{-\E^{-1}\B{\mu;s}\Rop(s)^{-1}\big(\B{\mu;s}^*\varphi_\mu(s)+\redBad{\mu;s}^*\hat{\varphi}_\mu(s)\big)} \\
				&= \normDualX{\E^{-1}\B{\mu;s}\Rop(s)^{-1}\big(\B{\mu;s}^*\varphi_\mu(s)-\B{\mu;s}^*\hat{\varphi}_\mu(s)\big)} \\
				&\leq \norm{\E^{-1}\B{\mu;s}\Rop(s)^{-1}\B{\mu;s}^*}_{\Lin{\X}{\dualX}}\normX{\varphi_\mu(s)-\hat{\varphi}_\mu(s)} \\
				&\leq C_2(\mu)\EstAdjo{\mu}{\varphi^\dimRedSpaceFTA}(s)
			\end{aligned}
		\end{equation}
		using the a posteriori error bound for the adjoint system from~\Cref{lem:error-estimator-adjoint-system}. Combining the last two estimates~\cref{equ:estimate-norm-primal-error-proof-gramian-estimate,equ:estimate-control-proof-gramian-estimate} gives the desired result.
	\end{proof}
	\begin{remark}[Improved error estimator using an adjoint problem]\label{rem:improved-error-estimator-adjoint-problem}
		The error estimator presented in the previous Theorem might be improved by means of an output error estimator that makes use of an adjoint problem, see~\cite[Section~5]{haasdonk2011efficient}. At several places we are actually interested in the error after applying the operator~$\Riesz{\X}^{-1}\Mop$ to the state or the Gramian instead of the error in the state or the Gramian itself. A suitable error estimator that makes use of the structure of~$\Mop$ might lead to a further improvement of the error bound.
	\end{remark}
	
	\subsubsection{Offline-online decomposition}
	In contrast to the error estimator described in~\Cref{sec:a-posteriori-rom-for-optimal-adjoint-states} for the reduction of final time adjoint states, the error estimator from the previous section can be decomposed into parameter-independent components in an offline phase and afterwards assembled efficiently in the online phase. The complexity of evaluating this error estimator therefore becomes independent of the dimension of the state space. Consequently, we obtain a reduced model together with a reduced and reliable error estimator that can be evaluated very fast for a new parameter. In the following paragraphs, we show how to decompose the individual parts of the error estimator. We make in particular use of the decompositions for the error estimators of the primal and the adjoint system.
	
	\paragraph{Decomposition of the error estimator for the Gramian:}
	According to the definition of~$\EstGram{\mu}{\varphi^\dimRedSpaceFTA}$, we can decompose the involved quantities by combining the decompositions of the primal residual and of the error estimator for the adjoint system as presented in~\Cref{sec:a-posteriori-for-primal-and-adjoint-systems}. However, to also efficiently compute~$\normX{\varphi^\dimRedSpaceFTA - \Projection{\spaceVad}\varphi^\dimRedSpaceFTA}$, which is required when evaluating~$\EstAdjo{\mu}{\varphi^\dimRedSpaceFTA}$ in the first integral in~\cref{equ:def-estimator-gramian}, we also assemble the parameter-independent operator
	\begin{align*}
		\varphi^\dimRedSpaceFTA \mapsto \innerProd{\left(\eye{\spaceVred{\dimRedSpaceFTA}}-\Projection{\spaceVad}\right)\varphi^\dimRedSpaceFTA}{\left(\eye{\spaceVred{\dimRedSpaceFTA}}-\Projection{\spaceVad}\right)\varphi^\dimRedSpaceFTA}_\X.
	\end{align*}
	This projection error vanishes in particular if~$\spaceVred{\dimRedSpaceFTA}\subset\spaceVad$ is chosen.
	
	\paragraph{Decomposition of the reduced error estimator for the final time adjoint:}
	Similar to the previous sections one can consider the squared norm and can decompose~$\left[\RedEstFTA{\mu}{\dimRedSpaceFTA}(\varphi^\dimRedSpaceFTA)\right]^2$ such that only reduced systems need to be solved. We provide details on the exact derivation in~\Cref{app:details-offline-online-decompositions-reduced-error-estimator}.
	
	\paragraph{Full decomposition of the reduced error estimator:}
	Combining everything from the previous paragraphs and subsections results in a fully efficient and offline-online decomposed error estimator~$\FullRedEstFTA{\mu}{\dimRedSpaceFTA}(\varphi^\dimRedSpaceFTA)$ for a reduced final time adjoint state~$\varphi^\dimRedSpaceFTA\in\spaceVred{\dimRedSpaceFTA}$. The costs for evaluating this error estimator are independent of the dimension of the state space which is a major difference to the reduced model from~\Cref{sec:reduced-model-optimal-adjoint-states}.
	
	\subsection{Strategies for computing the fully reduced model}\label{sec:strategies-computing-fully-reduced-model}
	Different strategies for creating the reduced order models for the primal and adjoint equations and for the final time adjoint states are possible. In~\cite{fabrini2018reduced}, the authors propose to first compute reduced dynamical systems and to run a greedy algorithm using these reduced systems to obtain a reduced basis for the final time adjoint states afterwards. This strategy will be recalled in more detail and extended in~\Cref{sec:system-red-subsequent-greedy}. In~\Cref{sec:greedy-on-full-model-subsequent-system-red}, we consider an algorithm where the order of reduction steps is reversed compared to the first approach. In addition, we present in~\Cref{sec:greedy-fully-reduced} a greedy algorithm that operates only on the fully reduced model and builds reduced bases for the system dynamics and the final time adjoint states simultaneously using the error estimator from~\Cref{sec:efficient-a-posteriori-fully-reduced}. Finally, we also propose a double greedy procedure in which the Gramian error estimator is used in the inner loop to enrich the reduced spaces for the primal and adjoint systems whereas the reduced space for the final time adjoint states is enriched within the outer loop of the algorithm.
	
	\subsubsection{System reduction and subsequent greedy algorithm with reduced dynamical systems for optimal final time adjoint states}\label{sec:system-red-subsequent-greedy}
	The first strategy to construct the fully reduced model was previously introduced in~\cite{fabrini2018reduced}. In this approach, one first performs a reduction of the primal and the adjoint system. The required reduced spaces~$\spaceVpr$, $\spaceWpr$, $\spaceVad$, and~$\spaceWad$ can e.g.\ be computed by compressing a precomputed set of trajectories of the primal and adjoint system using, for instance, the proper orthogonal decomposition (POD) method~\cite{graessle2021model}. The~POD extracts dominant modes of the given trajectories similar to the leading left singular vectors of a singular value decomposition of the training data. Compared to the~POD, its hierarchical approximation variant called~HaPOD~\cite{himpe2018hierarchical} is better suited for our use case since it can also handle long time trajectories and high state space dimensions without leading to runtime and memory issues. The~HaPOD takes as inputs a set of vectors, a tolerance and an inner product where the tolerance refers to the desired mean squared approximation error (similar to the usual criterion for the~POD). The orthonormalization of the modes within the~HaPOD is performed with respect to the provided inner product. As the control for the primal trajectories, we use the optimal control computed by solving the full model. We choose the optimal control since we are later mainly interested in simulating the primal system for the optimal control. However, different strategies are possible in that regard as well. We further use here and in the following the same parameter set~$\paramstrainsys\subset\params$ and the same tolerance for the reduction of both the primal and adjoint trajectories. One might also consider different training sets or tolerances instead.
	\par
	After the system reduction, a greedy algorithm using the error estimator~$\FullRedEstFTA{\mu}{\dimRedSpaceFTA}$ from~\Cref{sec:efficient-a-posteriori-fully-reduced} can be applied to construct the reduced basis for the optimal final time adjoint states. We refer for instance to~\cite{buffa2012priori} for details on greedy algorithms in the context of parametrized systems and~\cite{devore2013greedy,cohen2016kolmogorov} for convergence results of these methods. The greedy algorithm applied here uses only the reduced primal and adjoint system to compute an error estimate for the greedy selection of the next parameter. Within the greedy algorithm, having selected a certain parameter, it is now possible to either solve the full order model for the selected parameter or the reduced model using only reduced dynamical systems. Since the greedy algorithm is performed during the offline phase and computational costs of this phase play a minor role, we decided to use the more costly approach of solving the full order model which might lead to an improved reduced basis for the optimal final time adjoint states. Hence, using the greedy algorithm, a reduced basis is built iteratively by solving in every step the full order model for the parameter whose solution is currently approximated worst by the fully reduced model, according to the reduced error estimator. The full order solution is afterwards used to extend the reduced basis. This iterative procedure is stopped once the estimated error drops below a given tolerance for all parameters in a finite training set.
	\par
	We emphasize at this point that we make use of the same training set~$\paramstrainsys\subset\params$ for the system reduction and the greedy procedure. This is necessary since the reduced bases for the system reduction cannot be adjusted during the greedy procedure anymore if they are not sufficient in order to reach the prescribed tolerance. If we were to choose a larger training set during the greedy algorithm, the reduced models for the system dynamics are not able to guarantee the prescribed tolerance~$\epssys$ also on the larger training set for the final time adjoint reduction. To preclude issues with the greedy algorithm not terminating as good as possible, we thus choose the same training set for all reduction steps here. However, it might still happen that the prescribed tolerance~$\epsFTA$ cannot be reached during the greedy iteration.
	\par
	We refer to the reduced model computed by using the strategy described in this section as~\SRGROM{}. The pseudocode for the approach is shown in~\Cref{alg:SRGROM}.
	\begin{algo}{Strategy for creating the~\SRGROM{}}{alg:SRGROM}
		\Require Tolerances~$\epssys,\epsFTA>0$, finite training set~$\paramstrainsys\subset\params$
		\Ensure Reduced spaces~$\spaceVpr$, $\spaceWpr$, $\spaceVad$, $\spaceWad$ and~$\spaceVred{\dimRedSpaceFTA}$ which define the fully reduced model called~\SRGROM{}
		\State $\spaceVpr\gets\Span{\Call{HaPOD}{\{x_\mu^*:\mu\in\paramstrainsys\},\epssys,\innerProd{\cdot}{\cdot}_\X}}$\Comment{primal system reduction to determine~$\spaceVpr$}
		\State choose~$\spaceWpr$ such that~$\redEpr$ is invertible, see also~\Cref{sec:reduced-basis-mor-for-system-dynamics}\Comment{choose~$\spaceWpr$ accordingly}
		\State $\spaceVad\gets\Span{\Call{HaPOD}{\{\varphi_\mu^*:\mu\in\paramstrainsys\},\epssys,\innerProd{\cdot}{\cdot}_\X}}$\Comment{adjoint system reduction to determine~$\spaceVad$}
		\State choose~$\spaceWad$ such that~$\redEad$ is invertible, see also~\Cref{sec:reduced-basis-mor-for-system-dynamics}\Comment{choose~$\spaceWad$ accordingly}
		\State $\dimRedSpaceFTA\gets 0$
		\State $\spaceVred{\dimRedSpaceFTA}\gets\{0\}\subset\X$\Comment{initialize~$\spaceVred{\dimRedSpaceFTA}$}
		\State $\mu_1\gets\argmax_{\mu\in\paramstrainsys}\FullRedEstFTA{\mu}{0}(\compredFTA{\mu}{0})$
		\While{$\FullRedEstFTA{\mu_{\dimRedSpaceFTA+1}}{\dimRedSpaceFTA}(\compredFTA{\mu_{\dimRedSpaceFTA+1}}{\dimRedSpaceFTA})>\epsFTA$}\label{lst:while-loop}\Comment{greedy iteration}
		\State $\spaceVred{\dimRedSpaceFTA}\gets\Span{\spaceVred{\dimRedSpaceFTA}\cup\{\optFTA{\mu_{\dimRedSpaceFTA+1}}\}}$\Comment{enrich reduced space~$\spaceVred{\dimRedSpaceFTA}$}
		\State $\dimRedSpaceFTA\gets\dimRedSpaceFTA+1$
		\State $\mu_{\dimRedSpaceFTA+1}\gets\argmax_{\mu\in\paramstrainsys}\FullRedEstFTA{\mu}{\dimRedSpaceFTA}(\compredFTA{\mu}{\dimRedSpaceFTA})$\Comment{greedy selection step}
		\EndWhile
		\Return{$\spaceVpr$, $\spaceWpr$, $\spaceVad$, $\spaceWad$, $\spaceVred{\dimRedSpaceFTA}$}
	\end{algo}
	
	\begin{remark}[Greedy algorithm for system reductions]\label{rem:greedy-for-system-reductions}
		As another possibility to construct the reduced spaces for the system dynamics, i.e.~the space~$\spaceVpr$ and~$\spaceVad$, it is possible to apply a greedy algorithm using the error estimators for the primal and the adjoint equations, respectively. Within the greedy algorithm, one can once again make use of the HaPOD to compress the reduced bases extended by individual solution trajectories to remove linearly dependent elements from the basis and to keep the basis size small. This idea is similar to the~POD-greedy method, see~\cite{haasdonk2008reduced}. We refrain from considering this alternative approach (which is also applicable to the other strategies mentioned below) in more detail.
	\end{remark}
	\begin{remark}[Termination of the greedy algorithm]
		Due to the approximation of the dynamical systems and the application of the reduced error estimator, it is not guaranteed that the~\textbf{while}-loop in~Line~\ref{lst:while-loop} of~\Cref{alg:SRGROM} terminates, see also the discussion above on the choice of the training parameter sets. In a practical implementation of the approach (and similarly of the strategies described below), one should therefore make sure to handle such a case properly. For instance, if~$\dimRedSpaceFTA$ reaches the value of~$\kad$, an extension of the primal and adjoint reduced space~$\spaceVpr$ and~$\spaceVad$ using a smaller tolerance~$\epssys$ and a (potentially) larger training parameter set~$\paramstrainsys$ in the HaPOD might be useful. Alternatively, running a greedy procedure as described in~\Cref{rem:greedy-for-system-reductions} could solve the issue by identifying specifically those parameters for which the system dynamics are approximated insufficiently thus far.
	\end{remark}
	\begin{remark}[Replacing the greedy algorithm by POD/HaPOD]
		Instead of constructing the reduced space~$\spaceVred{\dimRedSpaceFTA}$ by means of a greedy search as shown above, one could also make use of the optimal final time adjoints computed for all parameters in~$\paramstrainsys$ and perform a~POD or~HaPOD of those adjoint states. This could be a reasonable approach in a case where the error estimator~$\FullRedEstFTA{\mu}{\dimRedSpaceFTA}$ is not available.
	\end{remark}
	
	\subsubsection{Greedy algorithm for reducing optimal final time adjoint states using full dynamical systems and subsequent system reductions}\label{sec:greedy-on-full-model-subsequent-system-red}
	The second strategy for computing the fully reduced model, called~\GSRROM{} in the following, proceeds similar to the~\SRGROM{} strategy in the sense that the two reduction steps are split. However, in the~\GSRROM{}, the order of reduction is swapped. Here, we first run the standard greedy algorithm to construct a reduced space~$\spaceVred{\dimRedSpaceFTA}$ for the optimal final time adjoint states using the a posteriori error estimator~$\EstFTA{\mu}{\dimRedSpaceFTA}$ from~\Cref{sec:a-posteriori-rom-for-optimal-adjoint-states}. Afterwards, the system reductions using the HaPOD as described in the previous section are performed independently from the first step. In principle, both reduction steps can also be performed in parallel.
	\par
	An algorithmic description of the~\GSRROM{} construction strategy is given in~\Cref{alg:GROMSR}.
	\begin{algo}{Strategy for creating the~\GSRROM{}}{alg:GROMSR}
		\Require Tolerances~$\epssys,\epsFTA>0$, finite training sets~$\paramstrainsys,\paramstrainfta\subset\params$
		\Ensure Reduced spaces~$\spaceVpr$, $\spaceWpr$, $\spaceVad$, $\spaceWad$ and~$\spaceVred{\dimRedSpaceFTA}$ which define the fully reduced model called~\GSRROM{}
		\State $\dimRedSpaceFTA\gets 0$
		\State $\redBasisVred{\dimRedSpaceFTA}\gets\{0\}\subset\X,\quad\spaceVred{\dimRedSpaceFTA}\gets\Span{\redBasisVred{\dimRedSpaceFTA}}\subset\X$\Comment{initialize~$\redBasisVred{\dimRedSpaceFTA}$ and~$\spaceVred{\dimRedSpaceFTA}$}
		\State $\mu_1\gets\argmax_{\mu\in\paramstrainfta}\EstFTA{\mu}{0}(\redFTA{\mu}{0})$
		\While{$\EstFTA{\mu_{\dimRedSpaceFTA+1}}{\dimRedSpaceFTA}(\redFTA{\mu_{\dimRedSpaceFTA+1}}{\dimRedSpaceFTA})>\epsFTA$}\Comment{greedy iteration}
		\State $\redBasisVred{\dimRedSpaceFTA}\gets\redBasisVred{\dimRedSpaceFTA}\cup\{\optFTA{\mu_{\dimRedSpaceFTA+1}}\}$\Comment{enrich reduced basis~$\redBasisVred{\dimRedSpaceFTA}$, perform orthonormalization if desired}
		\State $\spaceVred{\dimRedSpaceFTA}\gets\Span{\redBasisVred{\dimRedSpaceFTA}}$\Comment{update reduced space~$\spaceVred{\dimRedSpaceFTA}$}
		\State $\dimRedSpaceFTA\gets\dimRedSpaceFTA+1$
		\State $\mu_{\dimRedSpaceFTA+1}\gets\argmax_{\mu\in\paramstrainfta}\EstFTA{\mu}{\dimRedSpaceFTA}(\redFTA{\mu}{\dimRedSpaceFTA})$\Comment{greedy selection step}
		\EndWhile
		\State \vphantom{$\tilde{V}$}$\spaceVpr\gets\Span{\Call{HaPOD}{\{x_\mu^*:\mu\in\paramstrainsys\},\epssys,\innerProd{\cdot}{\cdot}_\X}}$\Comment{primal system reduction to determine~$\spaceVpr$}
		\State choose~$\spaceWpr$ such that~$\redEpr$ is invertible, see also~\Cref{sec:reduced-basis-mor-for-system-dynamics}\Comment{choose~$\spaceWpr$ accordingly}
		\State $\spaceVad\gets\Span{\Call{HaPOD}{\{\varphi_\mu^*:\mu\in\paramstrainsys\},\epssys,\innerProd{\cdot}{\cdot}_\X}}$\Comment{adjoint system reduction to determine~$\redBasisVad$}
		\State choose~$\spaceWad$ such that~$\redEad$ is invertible, see also~\Cref{sec:reduced-basis-mor-for-system-dynamics}\Comment{choose~$\spaceWad$ accordingly}
		\Return{$\spaceVpr$, $\spaceWpr$, $\spaceVad$, $\spaceWad$, $\spaceVred{\dimRedSpaceFTA}$}
	\end{algo}
	
	\subsubsection{Greedy algorithm on the fully reduced model}\label{sec:greedy-fully-reduced}
	Instead of reducing the dynamical systems and the final time adjoint states separately, it is also possible to run a greedy algorithm using the error estimator~$\FullRedEstFTA{\mu}{\dimRedSpaceFTA}$ to select the next parameter for enriching all reduced models at once. Afterwards, the reduced bases for the optimal final time adjoint states and also for the primal and adjoint systems are created iteratively by selecting the next parameter according to the error estimator~$\FullRedEstFTA{\mu}{\dimRedSpaceFTA}$ of the current fully reduced model. The greedy selection procedure is similar to the one described in~\Cref{sec:system-red-subsequent-greedy}. Within one iteration of the greedy algorithm, all reduced models are extended simultaneously.
	\par
	In contrast to the previously shown strategies, running the greedy algorithm directly on the fully reduced model as suggested here does not require to solve the primal and adjoint systems for all parameters in the training set~$\paramstrainsys$. Instead, only the trajectories corresponding to the parameters selected within the greedy iteration are required. Depending on the problem at hand, this might result in a faster offline phase for constructing the reduced order model compared to the previous approaches.
	\par
	The reduced model obtained by applying a greedy algorithm directly on the fully reduced model, using the error estimator~$\FullRedEstFTA{\mu}{\dimRedSpaceFTA}$ for the selection of parameters, will be referred to as~\GCROM{}. \Cref{alg:GCROM} presents the~\GCROM{} strategy.
	\begin{algo}{Strategy for creating the~\GCROM{}}{alg:GCROM}
		\Require Tolerances~$\eps,\epssys>0$, finite training set~$\paramstrain\subset\params$
		\Ensure Reduced spaces~$\spaceVpr$, $\spaceWpr$, $\spaceVad$, $\spaceWad$ and~$\spaceVred{\dimRedSpaceFTA}$ which define the fully reduced model called~\GCROM{}
		\State $\dimRedSpaceFTA\gets 0$
		\State $\redBasisVred{\dimRedSpaceFTA}\gets\{0\}\subset\X,\quad\spaceVred{\dimRedSpaceFTA}\gets\Span{\redBasisVred{\dimRedSpaceFTA}}\subset\X$\Comment{initialize~$\redBasisVred{\dimRedSpaceFTA}$ and~$\spaceVred{\dimRedSpaceFTA}$}
		\State $\redBasisVpr\gets\{0\}\subset\X,\quad\spaceVpr\gets\Span{\redBasisVpr}\subset\X$\Comment{initialize~$\redBasisVpr$ and~$\spaceVpr$}
		\State $\redBasisWpr\gets\{0\}\subset\X,\quad\spaceWpr\gets\Span{\redBasisWpr}\subset\X$\Comment{initialize~$\redBasisWpr$ and~$\spaceWpr$}
		\State $\redBasisVad\gets\{0\}\subset\X,\quad\spaceVad\gets\Span{\redBasisVad}\subset\X$\Comment{initialize~$\redBasisVad$ and~$\spaceVad$}
		\State $\redBasisWad\gets\{0\}\subset\X,\quad\spaceWad\gets\Span{\redBasisWad}\subset\X$\Comment{initialize~$\redBasisWad$ and~$\spaceWad$}
		\State $\mu_1\gets\argmax_{\mu\in\paramstrain}\FullRedEstFTA{\mu}{0}(\compredFTA{\mu}{0})$
		\While{$\FullRedEstFTA{\mu_{\dimRedSpaceFTA+1}}{\dimRedSpaceFTA}(\compredFTA{\mu_{\dimRedSpaceFTA+1}}{\dimRedSpaceFTA})>\eps$}\Comment{greedy iteration}
		\State $\redBasisVred{\dimRedSpaceFTA}\gets\redBasisVred{\dimRedSpaceFTA}\cup\{\optFTA{\mu_{\dimRedSpaceFTA+1}}\}$\Comment{enrich reduced basis~$\redBasisVred{\dimRedSpaceFTA}$, perform orthonormalization if desired}
		\State $\spaceVred{\dimRedSpaceFTA}\gets\Span{\redBasisVred{\dimRedSpaceFTA}}$\Comment{update reduced space~$\spaceVred{\dimRedSpaceFTA}$}
		\State $\redBasisVpr\gets\Call{HaPOD}{\redBasisVpr\cup\{x_{\mu_{\dimRedSpaceFTA+1}}^*\},\epssys,\innerProd{\cdot}{\cdot}_\X}$\Comment{primal system reduction}
		\State $\spaceVpr\gets\Span{\redBasisVpr}$\Comment{update primal reduced space~$\spaceVpr$}
		\State choose~$\spaceWpr$ such that~$\redEpr$ is invertible, see also~\Cref{sec:reduced-basis-mor-for-system-dynamics}\Comment{choose~$\spaceWpr$ accordingly}
		\State $\redBasisVad\gets\Call{HaPOD}{\redBasisVad\cup\{\varphi_{\mu_{\dimRedSpaceFTA+1}}^*\},\epssys,\innerProd{\cdot}{\cdot}_\X}$\Comment{adjoint system reduction}
		\State $\spaceVad\gets\Span{\redBasisVad}$\Comment{update adjoint reduced space~$\spaceVad$}
		\State choose~$\spaceWad$ such that~$\redEad$ is invertible, see also~\Cref{sec:reduced-basis-mor-for-system-dynamics}\Comment{choose~$\spaceWad$ accordingly}
		\State $\dimRedSpaceFTA\gets\dimRedSpaceFTA+1$
		\State $\mu_{\dimRedSpaceFTA+1}\gets\argmax_{\mu\in\paramstrain}\FullRedEstFTA{\mu}{\dimRedSpaceFTA}(\compredFTA{\mu}{\dimRedSpaceFTA})$\Comment{greedy selection step}
		\EndWhile
		\Return{$\spaceVpr$, $\spaceWpr$, $\spaceVad$, $\spaceWad$, $\spaceVred{\dimRedSpaceFTA}$}
	\end{algo}
	\begin{remark}[Statements on the convergence of the greedy algorithm]
		Compared to the greedy algorithm described in~\cite{kleikamp2024greedy} for constructing a reduced basis for the manifold of optimal final time adjoint states, there are no performance guarantees for the greedy algorithm on the fully reduced model, i.e.~\Cref{alg:GCROM}, available. Such theoretical statements are (so far) impossible due to the missing efficiency of the error estimator~$\FullRedEstFTA{\mu}{\dimRedSpaceFTA}$. We hence cannot guarantee convergence or even state convergence rates for the greedy algorithm presented in this section.
	\end{remark}
	\begin{remark}[Greedy algorithm taking into account individual error contributions]
		The~\GCROM{} approach can be refined in the sense that the different components of the error estimator~$\FullRedEstFTA{\mu}{\dimRedSpaceFTA}$ in~\cref{equ:a-posteriori-error-estimator-fully-reduced-model} can be used to extend the different reduced bases individually. The error components could provide hints regarding which reduced basis, i.e.~the basis for the optimal final time adjoints (using the reduced error estimator~$\RedEstFTA{\mu}{\dimRedSpaceFTA}$) or the bases for the system dynamics (using the remaining parts of~$\FullRedEstFTA{\mu}{\dimRedSpaceFTA}$), should be extended due to a large contribution to the error estimate. The main goal of these separate basis extensions would be to retain the accuracy of the~\GCROM{} while obtaining smaller reduced models if possible. In that case, this reduced model would be similarly accurate but faster (in the offline construction and the online evaluation) compared to the~\GCROM{}. We do not discuss this approach in more detail to keep the presentation lucid. However, the double greedy approach that will be introduced in the next section follows a similar idea.
	\end{remark}
	
	\subsubsection{Double greedy algorithm}\label{sec:double-greedy}
	As the last approach discussed in this paper, we propose a double greedy procedure to construct the fully reduced model. In the context of reduced order modeling for inf-sup stable Petrov-Galerkin schemes and transport dominated problems, a double greedy algorithm has been considered in~\cite{dahmen2014double}.
	Compared to the~\GCROM{}, we do not only consider the state trajectories for the parameters chosen in
	the reduction of the optimal final time adjoint states but instead allow for an enrichment with
	trajectories corresponding to arbitrary parameters in the training set.
	This leads to a nested structure where an
	outer greedy loop enriches the reduced basis for the final time adjoint states
	while an inner greedy loop that follows each update ensures that the system dynamics
	for the new final time adjoint and arbitrary training parameters are still sufficiently
	captured by the reduced states. The reduced spaces for the primal and adjoint systems are thus tailored to the current reduced space for the final time adjoint states. As before, the outer greedy loop uses the reduced error estimator~$\FullRedEstFTA{\mu}{\dimRedSpaceFTA}$ to determine when the fully reduced model is sufficiently accurate.
	The inner loop considers only the Gramian error estimator (multiplied by~$\const$ and~$\norm{\Mop}_{\Lin{\X}{\dualX}}$ to mimic the scaling in the full error estimator~\cref{equ:a-posteriori-error-estimator-fully-reduced-model}) and enriches the reduced spaces for the system reductions until the estimated Gramian error drops below a prescribed tolerance. Here, one might in particular benefit from a tailored output error estimation as suggested in~\Cref{rem:improved-error-estimator-adjoint-problem}.
	\par
	In contrast to the~\GCROM{}, this strategy is motivated by the idea that the reduced spaces for the system dynamics can be constructed based on the reduced space for the final time adjoint states. This furthermore enables the usage of the Gramian error estimator and allows for treating the involved reduced spaces differently. However, we expect the double greedy algorithm to result in larger reduced spaces than the~\GCROM{} since the reduced spaces for the dynamical systems are constructed based on a complete training set and are not restricted to the optimal final time adjoints for the respective parameters. On the other hand, the reduced spaces might be more ``stable'' in the sense that the overestimation of the error by the corresponding error estimator might be smaller compared to the other fully reduced models.
	\par
	We refer to the resulting reduced models as~\DGROM{} and present the strategy in~\Cref{alg:DGROM}.
	\begin{algo}{Strategy for creating the~\DGROM{}}{alg:DGROM}
		\Require Tolerances~$\eps,\epsinner,\epssys>0$, finite training set~$\paramstrain\subset\params$ for the outer iteration, finite training set~$\paramstraininner\subset\params$ for the inner iteration
		\Ensure Reduced spaces~$\spaceVpr$, $\spaceWpr$, $\spaceVad$, $\spaceWad$ and~$\spaceVred{\dimRedSpaceFTA}$ which define the fully reduced model called~\DGROM{}
		\State $\dimRedSpaceFTA\gets 0$
		\State $\redBasisVred{\dimRedSpaceFTA}\gets\{0\}\subset\X,\quad\spaceVred{\dimRedSpaceFTA}\gets\Span{\redBasisVred{\dimRedSpaceFTA}}\subset\X$\Comment{initialize~$\redBasisVred{\dimRedSpaceFTA}$ and~$\spaceVred{\dimRedSpaceFTA}$}
		\State $\redBasisVpr\gets\{0\}\subset\X,\quad\spaceVpr\gets\Span{\redBasisVpr}\subset\X$\Comment{initialize~$\redBasisVpr$ and~$\spaceVpr$}
		\State $\redBasisWpr\gets\{0\}\subset\X,\quad\spaceWpr\gets\Span{\redBasisWpr}\subset\X$\Comment{initialize~$\redBasisWpr$ and~$\spaceWpr$}
		\State $\redBasisVad\gets\{0\}\subset\X,\quad\spaceVad\gets\Span{\redBasisVad}\subset\X$\Comment{initialize~$\redBasisVad$ and~$\spaceVad$}
		\State $\redBasisWad\gets\{0\}\subset\X,\quad\spaceWad\gets\Span{\redBasisWad}\subset\X$\Comment{initialize~$\redBasisWad$ and~$\spaceWad$}
		\State $\mu_1\gets\argmax_{\mu\in\paramstrain}\FullRedEstFTA{\mu}{0}(\compredFTA{\mu}{0})$
		\While{$\FullRedEstFTA{\mu_{\dimRedSpaceFTA+1}}{\dimRedSpaceFTA}(\compredFTA{\mu_{\dimRedSpaceFTA+1}}{\dimRedSpaceFTA})>\eps$}\Comment{greedy iteration}
		\State $\redBasisVred{\dimRedSpaceFTA}\gets\redBasisVred{\dimRedSpaceFTA}\cup\{\optFTA{\mu_{\dimRedSpaceFTA+1}}\}$\Comment{enrich reduced basis~$\redBasisVred{\dimRedSpaceFTA}$, perform orthonormalization if desired}
		\State $\spaceVred{\dimRedSpaceFTA}\gets\Span{\redBasisVred{\dimRedSpaceFTA}}$\Comment{update reduced space~$\spaceVred{\dimRedSpaceFTA}$}
		\State $\mu^\mathrm{inner}\gets\argmax_{\mu\in\paramstraininner}\EstGram{\mu}{\compredFTA{\mu}{\dimRedSpaceFTA}}$
		\While{$\const\norm{\Mop}_{\Lin{\X}{\dualX}}\EstGram{\mu^\mathrm{inner}}{\compredFTA{\mu^\mathrm{inner}}{\dimRedSpaceFTA}}>\epsinner$}\Comment{inner greedy iteration}
		\State \vphantom{$\tilde{V}$}$\redBasisVpr\gets\Call{HaPOD}{\redBasisVpr\cup\{x_{\mu^\mathrm{inner}}^*\},\epssys,\innerProd{\cdot}{\cdot}_\X}$\Comment{primal system reduction}
		\State $\spaceVpr\gets\Span{\redBasisVpr}$\Comment{update primal reduced space~$\spaceVpr$}
		\State choose~$\spaceWpr$ such that~$\redEpr$ is invertible, see also~\Cref{sec:reduced-basis-mor-for-system-dynamics}\Comment{choose~$\spaceWpr$ accordingly}
		\State $\redBasisVad\gets\Call{HaPOD}{\redBasisVad\cup\{\varphi_{\mu^\mathrm{inner}}^*\},\epssys,\innerProd{\cdot}{\cdot}_\X}$\Comment{adjoint system reduction}
		\State $\spaceVad\gets\Span{\redBasisVad}$\Comment{update adjoint reduced space~$\spaceVad$}
		\State choose~$\spaceWad$ such that~$\redEad$ is invertible, see also~\Cref{sec:reduced-basis-mor-for-system-dynamics}\Comment{choose~$\spaceWad$ accordingly}
		\State $\mu^\mathrm{inner}\gets\argmax_{\mu\in\paramstraininner}\EstGram{\mu}{\compredFTA{\mu}{\dimRedSpaceFTA}}$\Comment{inner greedy selection step}
		\EndWhile
		\State \vphantom{$\tilde{V}$}$\dimRedSpaceFTA\gets\dimRedSpaceFTA+1$
		\State $\mu_{\dimRedSpaceFTA+1}\gets\argmax_{\mu\in\paramstrain}\FullRedEstFTA{\mu}{\dimRedSpaceFTA}(\compredFTA{\mu}{\dimRedSpaceFTA})$\Comment{greedy selection step}
		\EndWhile
		\Return{$\spaceVpr$, $\spaceWpr$, $\spaceVad$, $\spaceWad$, $\spaceVred{\dimRedSpaceFTA}$}
	\end{algo}

    \section{Numerical experiment}\label{sec:numerical-experiments}
    In this section we verify the fully reduced model and the different reduction strategies described in the previous section on a numerical example. As a test case we consider a time-varying version of a thermalblock-type experiment called the~\emph{cookie baking problem} which is available in the MORwiki benchmark collection~\cite{morwiki_thermalblock}. Before describing the problem setting in detail and presenting numerical results, we provide some implementational details.
    
    \subsection{Implementational details}
    The implementation used for the numerical example in this section is based on the~\texttt{Python} programming language and uses the packages~\texttt{numpy}~\cite{harris2020array} and~\texttt{scipy}~\cite{virtanen2020SciPy} for dealing with matrices and vectors. The development of the methods described in this contribution is in particular motivated by the high-dimensional state space that occurs in the discretization of PDEs in multiple space dimensions. To be able to treat such dynamical systems at all, it is necessary to exploit the sparsity of the involved system matrices. This task is accomplished by means of the routines provided in the~\texttt{sparse} package in~\texttt{scipy}. For example, we apply the biconjugate gradient stabilized~(Bi-CGSTAB) method, see for instance~\cite{vanderVorst1992bicgstab,saad2003iterative}, to solve the linear system in the full order problem. The relative tolerance of the~Bi-CGSTAB~algorithm is set to~$10^{-8}$.
    \par
    In the algorithms described in~\Cref{sec:strategies-computing-fully-reduced-model} we apply the~HaPOD algorithm implemented in the model order reduction software~\texttt{pyMOR}~\cite{milk2016pyMOR}. In this numerical experiment, we apply the distributed version of the~HaPOD as described in detail in~\cite[Section~3.2]{himpe2018hierarchical}. As hyperparameters for the algorithm we set the number of slides to~$50$, the desired~$\ell^2$-mean approximation error to~$10^{-9}$, and the tuning parameter~$\omega$ to~$0.9$ (which prefers small reduced bases sizes while accepting a larger computational effort for computing the bases). Numerical tests using the incremental~HaPOD show similar results as presented below. Moreover, we use an inner product matrix~$\Gmat\in\R^{n\times n}$, where~$n\in\N$ denotes the dimension of the state space, for orthonormalization in the HaPOD. The matrix~$\Gmat$ will be specified below in the context of the concrete discretization of the problem.
    \par
    The matrices~$\Vpr$ and~$\Wpr$ (and similarly for~$\Vad$ and~$\Wad$) are constructed in such a way that they are biorthogonal as discussed in~\Cref{rem:matrix-case}. To this end, whenever necessary we initialize the biorthogonal operators~$\Vpr$, $\Wpr$, $\Vad$, and~$\Wad$ by compressing the respective (optimal) solution trajectories for a single parameter~$\mu_\text{init}\in\params$ using the HaPOD~method.
    \par
    The finite element mesh used in the test case below is generated using \texttt{gmsh}~\cite{geuzaine2024gmsh} (with the~\texttt{clscale} parameter set to~$0.1$ resulting in~$13142$ cells) and the discretization with~$P^1$-finite elements on the triangulation of the domain is performed using \texttt{FEniCS}~\cite{alnaes2015fenics,logg2012automated}.
    \par
    The numerical experiments have been conducted on a dual socket compute server with two Intel(R) Xeon(R) Gold~6254~CPUs running at~3.10GHz and 36~cores in each CPU.
    \par
    Furthermore, we also provide the source code for the numerical test case~\cite{sourcecode} which can be used to reproduce the numerical results\footnote{The corresponding \texttt{GitHub}-repository containing the source code is available at~\url{https://github.com/HenKlei/REDUCED-OPT-CONTROL}}.
    
    \subsection{The cookie baking problem}
    As numerical test case we consider a time-varying version of a benchmark problem known as the~\emph{cookie baking problem}. The benchmark is introduced in~\cite{rave2021nonstationary} and can be found in the MORwiki~\cite{morwiki_thermalblock}. In the following we briefly describe the problem setting and the optimal control problem considered in our numerical example. The presentation follows~\cite{rave2021nonstationary}. In contrast to the original benchmark, we consider a time-dependent heat conductivity with a slightly adjusted parametrization as shown below.
    \par
    We partition the domain~$\Omega=(0,1)^2$ into five subdomains of the form
    \begin{align*}
    	\Omega_1 &\coloneqq \{\xi\in\Omega:\norm{\xi-(0.3,0.3)}_2<0.1\}, &\Omega_2 &\coloneqq \{\xi\in\Omega:\norm{\xi-(0.7,0.3)}_2<0.1\},\\
    	\Omega_3 &\coloneqq \{\xi\in\Omega:\norm{\xi-(0.7,0.7)}_2<0.1\}, &\Omega_4 &\coloneqq \{\xi\in\Omega:\norm{\xi-(0.3,0.7)}_2<0.1\},\\
    	\Omega_0 &\coloneqq \Omega\setminus(\Omega_1\cup\Omega_2\cup\Omega_3\cup\Omega_4)
    \end{align*}
    where the subdomains~$\Omega_1,\dots,\Omega_4$ represent the ``cookies''. Further, the boundary~$\Gamma=\partial\Omega$ is divided into the parts
    \begin{align*}
    	\Gamma_{\mathrm{in}}\coloneqq\{0\}\times[0,1],\qquad\Gamma_D\coloneqq\{1\}\times[0,1],\qquad\text{and}\qquad\Gamma_N\coloneqq(0,1)\times\{0,1\}.
    \end{align*}
    The partitioning of the domain~$\Omega$ and its boundary~$\Gamma$ are visualized in the left part of~\Cref{fig:cookie-domain}.
    \par
    We define a heterogeneous and parameter-dependent heat conductivity~$\sigma\colon[0,T]\times\Omega\times\params\to\R_+$ which is, for~$\xi\in\Omega$, $t\in[0,T]$ and~$\mu\in\params\coloneqq[10^{-1},10^2]^2$, given as
    \begin{align*}
    	\sigma(t,\xi;\mu) \coloneqq \begin{cases}14\cdot (t-0.25)^2+0.125,&\text{for }\xi\in\Omega_0,\\\mu_1,&\text{for }\xi\in\Omega_1\cup\Omega_3,\\\mu_2,&\text{for }\xi\in\Omega_2\cup\Omega_4.\end{cases}
    \end{align*}
    This choice of conductivity describes a problem in which the four circular subdomains possess parameter-dependent material characteristics (i.e.~the cookies differ in the physical properties of their dough). Here, the subdomains~$\Omega_1$ and~$\Omega_3$ as well as~$\Omega_2$ and~$\Omega_4$ share the same conductivity. The time-dependency enters the problem via a background conductivity in~$\Omega_0$ that changes according to a function which is quadratic in time. This quadratic profile~$q\colon[0,T]\to\R$ given as~$q(t)\coloneqq 14\cdot (t-0.25)^2+0.125$ is constructed such that~$\min_{t\in[0,T]}q(t)=0.125$, $\argmin_{t\in[0,T]}q(t)=0.25$ and~$q(0)=1$. The input~$u\colon[0,T]\to\R$ acts on the inflow boundary~$\Gamma_{\mathrm{in}}$ as a Neumann boundary condition. We hence obtain a parametrized parabolic PDE describing the evolution of the temperature~$\theta\colon[0,T]\times\Omega\times\params\to\R$ over time as
    \begin{alignat*}{2}
    	\partial_t\theta(t,\xi;\mu)-\nabla\cdot(\sigma(t,\xi;\mu)\nabla\theta(t,\xi;\mu))&=0\qquad &t\in(0,T),\ &\xi\in\Omega,\\
    	\sigma(t,\xi;\mu)\nabla\theta(t,\xi;\mu)\cdot\normal(\xi)&=u(t),\qquad&t\in(0,T),\ &\xi\in\Gamma_{\mathrm{in}},\\
    	\sigma(t,\xi;\mu)\nabla\theta(t,\xi;\mu)\cdot\normal(\xi)&=0,&t\in(0,T),\ &\xi\in\Gamma_N,\\
    	\theta(t,\xi;\mu)&=0,&t\in(0,T),\ &\xi\in\Gamma_D,\\
    	\theta(0,\xi;\mu)&=0,&&\xi\in\Omega.
    \end{alignat*}
    Here,~$\normal\colon\Gamma\to\mathcal{S}^{1}$ denotes the unit outer normal on~$\Gamma$ (where~$\mathcal{S}^1\subset\R^2$ is the unit circle). Moreover, we define the components of the output~$y\colon[0,T]\times\params\to\R^4$ as the average over the respective subdomain, that is
    \begin{align*}
    	y_i(t;\mu)\coloneqq\frac{1}{\lvert\Omega_i\rvert}\int\limits_{\Omega_i}\theta(t,\xi;\mu)\d{\xi}\qquad\text{for }i=1,\dots,4.
    \end{align*}
    \par
    The system matrices are obtained by Galerkin projection of the weak formulation of the~PDE onto a space of first order finite elements on a triangulation of the domain~$\Omega$. The circular boundaries of the subdomains~$\Omega_1,\dots,\Omega_4$ are approximated by faces of elements from the triangulation, i.e.~the circles are approximated piecewise by straight lines. For every subdomain~$\Omega_i$ we obtain a corresponding matrix~$A_i\in\R^{n\times n}$ for~$i=1,\dots,4$. Furthermore, the parameter-independent background conductivity is represented in a matrix~$A_0\in\R^{n\times n}$. We consequently have
    \begin{align*}
    	\A{\mu;t}=\big(14\cdot (t-0.25)^2+0.125\big)A_0+\mu_1(A_1+A_3)+\mu_2(A_2+A_4).
    \end{align*}
    The action of the control at the inflow boundary is described by the (parameter- and time-independent) matrix~$B\in\R^{n\times m}$ with~$m=1$. We additionally obtain an output matrix~$C\in\R^{4\times n}$ that relates the discretized state to the output. It is important to also take the (symmetric and positive-definite) mass matrix~$\E\in\R^{n\times n}$ (including handling of boundary values) into account. The semi-discretized problem is then for~$t\in[0,T]$ given as
    \begin{align*}
    	\E\frac{d}{dt}x_\mu(t) &= \A{\mu;t}x_\mu(t)+Bu_\mu(t),\\
    	y(t;\mu) &= Cx_\mu(t).
    \end{align*}
    The time discretization scheme is rewritten in such a way that as few linear systems of equations involving the mass matrix as possible need to be solved. For the time discretization we employ the implicit Euler scheme with~$\nt=50$ time steps.
    \par
    We choose~$\Mop=C^\top C$ such that a deviation in the output is penalized and~$x_\mu^T=0.25\cdot\textbf{1}_n$ as target state, where~$\textbf{1}_n\in\R^n$ denotes the~$n$-dimensional vector consisting of ones in each entry. In this setting, the goal is to steer the system to a state in which every component of the output is close to a value of~$0.25$ at final time~$T=1$ since~$Cx_\mu^T=0.25\cdot\textbf{1}_4\in\R^4$. Further, we set~$\Gmat\in\R^{n\times n}$ to be the mass matrix of the finite element discretization (without boundary value treatment), which relates to the~$L^2$-norm in the finite element space. The space~$\X$ hence corresponds to the space~$\R^n$ equipped with the inner product induced by the matrix~$\Gmat$, i.e.~we define~$\normX{x}\coloneqq\sqrt{x^\top\Gmat x}$ for~$x\in\R^n=\X$. Moreover, we set~$\Rop=0.02$ as time-independent weighting of the control energy. The reduced operators are constructed to be biorthogonal with respect to the mass matrix, which is ensured by setting~$\Wpr=\Vpr$ as well as~$\Wad=\Vad$ and using the inner product induced by the mass matrix within the~HaPOD method for orthonormalization.
    \par
    Since the system matrix~$\A{\mu;t}$ is negative-definite for all parameters~$\mu\in\params$ at all times~$t\in[0,T]$ with a negative upper bound on the eigenvalues (recall that the background diffusivity is at least~$0.125$) that is independent of parameter and time (i.e.~the matrix~$\A{\mu;t}$ is uniformly negative definite with respect to time and parameter), the resulting system is stable. Therefore, the choice~$C_1(\mu)=1$ fulfills~\cref{equ:constant-c1} for all~$\mu\in\params$, see also~\cite[Section~4]{haasdonk2011efficient}. The constant~$C_2(\mu)\equiv C_2\geq 0$ can be computed offline as
    \begin{align*}
    	C_2=\norm{\Gmat^{1/2}\E^{-1}BR^{-1}B^\top\Gmat^{-1/2}}_2=\norm{\E^{-1}BR^{-1}B^\top}_{\Lin{\X}{\dualX}}
    \end{align*}
    since the matrices~$B$ and~$R$ are independent of parameter and time. For the constant~$\const$, we observe in the numerical experiments that~$\const=1$ is sufficient to guarantee a reliable error estimator. To give a precise estimate for~$\const$ is usually impossible in practice without additional assumptions on~$\E$ and~$\Mop$. We therefore resort to the choice given above which is motivated from our observations in the numerical experiment. We also emphasize at this point that the given value for~$\const$ was observed from the reduced model for the final time adjoint states without reduction of the system dynamics, i.e.~the reduced model described in~\Cref{sec:reduced-model-optimal-adjoint-states}. Since the same value of the constant also leads to reliable error estimates in the case of an additional system reduction, we believe that the given choice for~$\const$ is appropriate in this numerical example.
    \par
    \Cref{fig:cookie-domain} shows the domain~$\Omega$ on the left hand side and the solution state at a fixed time associated to the optimal control for a certain choice of the parameter on the right-hand side.
    \par
    \begin{figure}[htb]
    	\begin{minipage}{.47\textwidth}
    		\begin{tikzpicture}
    			\pic[scale=6]{cookiesdomain};
    		\end{tikzpicture}
    	\end{minipage}%
    	\hfill
    	\begin{minipage}{.53\textwidth}
    		\begin{tikzpicture}
    			\node at (3,3) {\includegraphics[width=6cm]{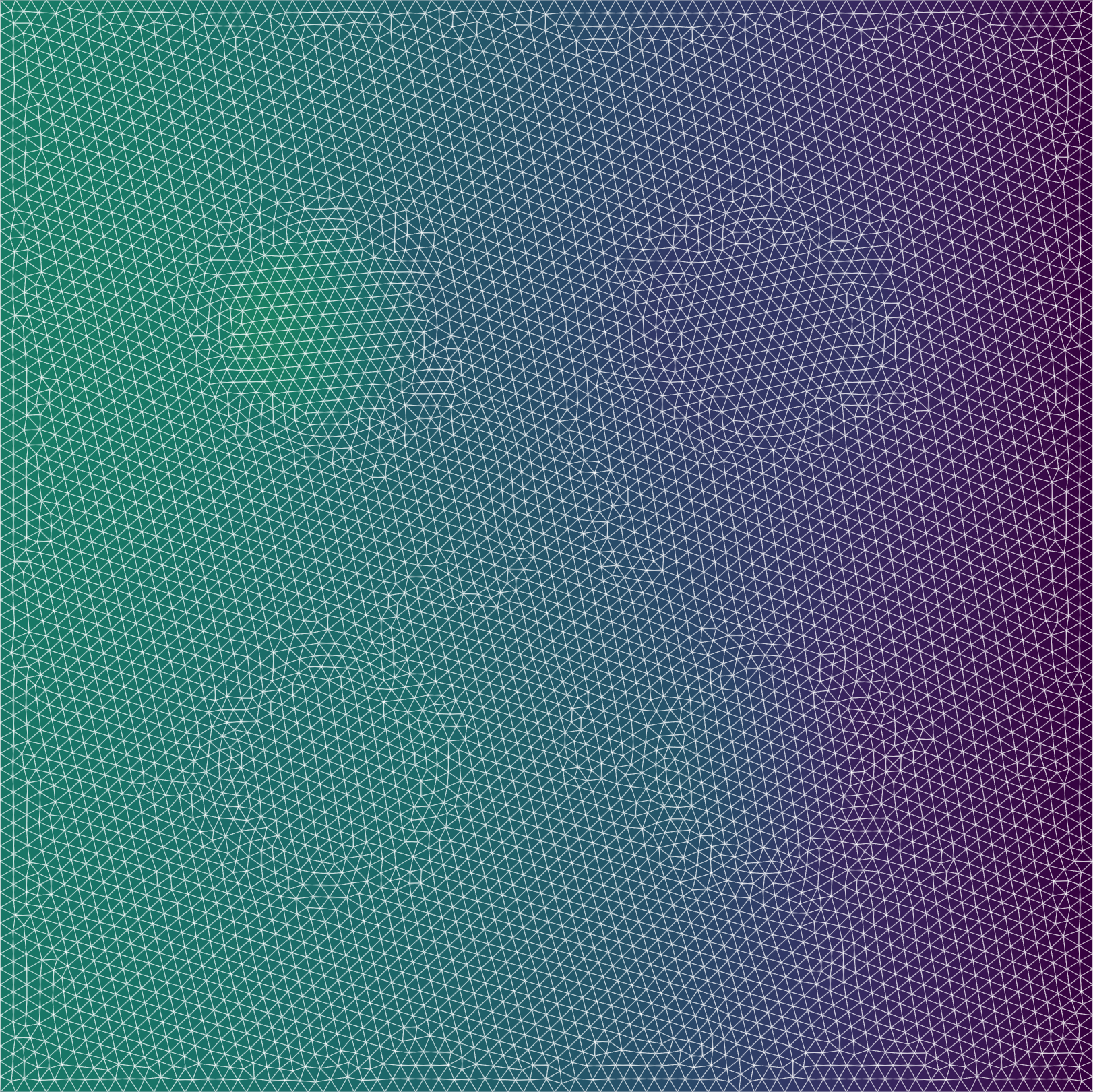}};
    			\pic[scale=6]{cookiesdomain};
    			\begin{axis}[
    				name=colorbar,
    				hide axis,
    				scale only axis,
    				xshift=6.25cm,
    				height=6cm,
    				width=0.5cm,
    				colorbar,
    				xmin=0, xmax=1, ymin=0, ymax=1,
    				colormap/viridis,
    				point meta min=0,
    				point meta max=0.52020485001641,
    				colorbar style={
    					scaled y ticks = false,
    					yticklabel style={/pgf/number format/.cd,precision=5},
    				},
    				clip=false,
    				]
    				\addplot [draw=none] coordinates {(0,0) (1,1)};
    			\end{axis}
    		\end{tikzpicture}
    	\end{minipage}
    	\caption{Visualization of the domain~$\Omega$ for the cookies baking example in the left plot, see~\cite[Figure~1]{rave2021nonstationary}. The solution state at time~$t=1$ on the finite element grid corresponding to the optimal control~$u_\mu^*$ for the parameter~$\mu=(100,0.1)$ is additionally shown in the plot on the right.}
    	\label{fig:cookie-domain}
    \end{figure}
    
    We further present in~\Cref{fig:optimal-control-and-outputs} the optimal control over time with corresponding solution states at several different time steps for the parameter~$\mu=(100,0.1)$. Additionally, the outputs over time are depicted separately. We observe that for this choice of the parameter, the first and the last component of the output (associated to the two circular subdomains~$\Omega_1$ and~$\Omega_4$ in the left half of the domain~$\Omega$) are controlled to be close to the target of~$0.25$ at final time. The output for the other two subdomains~$\Omega_2$ and~$\Omega_3$ only reach a value of about~$0.1$ at the end of the time interval. The low conductivity of~$\mu_2=0.1$ in~$\Omega_2$ and~$\Omega_4$ makes it particularly costly in terms of control energy to heat up the subdomains~$\Omega_2$ and~$\Omega_4$ to the desired value of~$0.25$. Since a large control energy is penalized in the optimal control problem, the second subdomain~$\Omega_2$ thus remains relatively cold in favor of a smaller control energy. This behavior can also be seen in the solution plots over time given in the middle row of~\Cref{fig:optimal-control-and-outputs}. The third subdomain~$\Omega_3$ is affected by the low conductivity in~$\Omega_4$ and the larger distance to the heating boundary~$\Gamma_{\mathrm{in}}$ and thus cannot be heated up without consuming additional control energy as well. Although having only a low conductivity, due to its proximity to~$\Gamma_{\mathrm{in}}$ the subdomain~$\Omega_4$ reaches a value close to the target concerning the respective output.
    \par
    \begin{figure}[htb]
    	\centering
    	\pgfplotstableread[header=false]{data/optimal_control.txt}\mytablecont
    	\pgfplotstablegetelem{0}{[index]0}\of\mytablecont
    	\edef\firstvaluecont{\pgfplotsretval}
    	\pgfplotstablegetelem{10}{[index]0}\of\mytablecont
    	\edef\secondvaluecont{\pgfplotsretval}
    	\pgfplotstablegetelem{20}{[index]0}\of\mytablecont
    	\edef\thirdvaluecont{\pgfplotsretval}
    	\pgfplotstablegetelem{30}{[index]0}\of\mytablecont
    	\edef\fourthvaluecont{\pgfplotsretval}
    	\pgfplotstablegetelem{40}{[index]0}\of\mytablecont
    	\edef\fifthvaluecont{\pgfplotsretval}
    	\pgfplotstablegetelem{48}{[index]0}\of\mytablecont
    	\edef\intermediatevaluecont{\pgfplotsretval}
    	\pgfplotstablegetelem{50}{[index]0}\of\mytablecont
    	\edef\sixthvaluecont{\pgfplotsretval}
    	\pgfplotstableread[header=false]{data/output.txt}\mytableout
    	\pgfplotstablegetelem{0}{[index]0}\of\mytableout
    	\edef\firstvalueout{\pgfplotsretval}
    	\pgfplotstablegetelem{10}{[index]0}\of\mytableout
    	\edef\secondvalueout{\pgfplotsretval}
    	\pgfplotstablegetelem{20}{[index]0}\of\mytableout
    	\edef\thirdvalueout{\pgfplotsretval}
    	\pgfplotstablegetelem{30}{[index]0}\of\mytableout
    	\edef\fourthvalueout{\pgfplotsretval}
    	\pgfplotstablegetelem{40}{[index]0}\of\mytableout
    	\edef\fifthvalueout{\pgfplotsretval}
    	\pgfplotstablegetelem{48}{[index]0}\of\mytableout
    	\edef\intermediatevalueout{\pgfplotsretval}
    	\pgfplotstablegetelem{50}{[index]3}\of\mytableout
    	\edef\sixthvalueout{\pgfplotsretval}
    	\begin{tikzpicture}[remember picture]
    		\begin{axis}[
    			name=control,
    			anchor=south,
    			scale only axis,
    			width=14cm,
    			height=3cm,
    			DefaultStyle,
    			xmin=0,
    			xmax=50,
    			ymin=0,
    			ymax=5,
    			legend style={xshift=-23pt, yshift=-3pt},
    			xlabel={Time~$t$},
    			xtick pos=top,
    			ytick={0,1,2,3,4,5},
    			xtick={0,5,...,50},
    			xticklabels={0,0.1,0.2,0.3,0.4,0.5,0.6,0.7,0.8,0.9,1},
    			ylabel={Control},
    			ylabel shift=-4pt,
    			]
    			\addplot[thick, color=viridisViolet] table[mark=none, x expr=\coordindex, y index=0] {data/optimal_control.txt};
    			\addlegendentry{$u_\mu^*$: optimal control}
    			\node (val-0) at (0,\firstvaluecont) {};
    			\node (val-10) at (10,\secondvaluecont) {};
    			\node (val-20) at (20,\thirdvaluecont) {};
    			\node (val-30) at (30,\fourthvaluecont) {};
    			\node (val-40) at (40,\fifthvaluecont) {};
    			\node (val-48) at (48,\intermediatevaluecont) {};
    			\node (val-50) at (50,\sixthvaluecont) {};
    		\end{axis}
    		\begin{axis}[
    			name=plots,
    			anchor=north,
    			hide axis,
    			at=(control.south),
    			scale only axis,
    			width=14cm,
    			height=3.5cm,
    			DefaultStyle,
    			yshift=1cm,
    			xmin=0,
    			xmax=50,
    			ymin=-3,
    			ymax=2.,
    			clip=false,
    			]
    			\addplot graphics [xmin=-2.5, xmax=2.5, ymin=-2.5, ymax=-0.357] {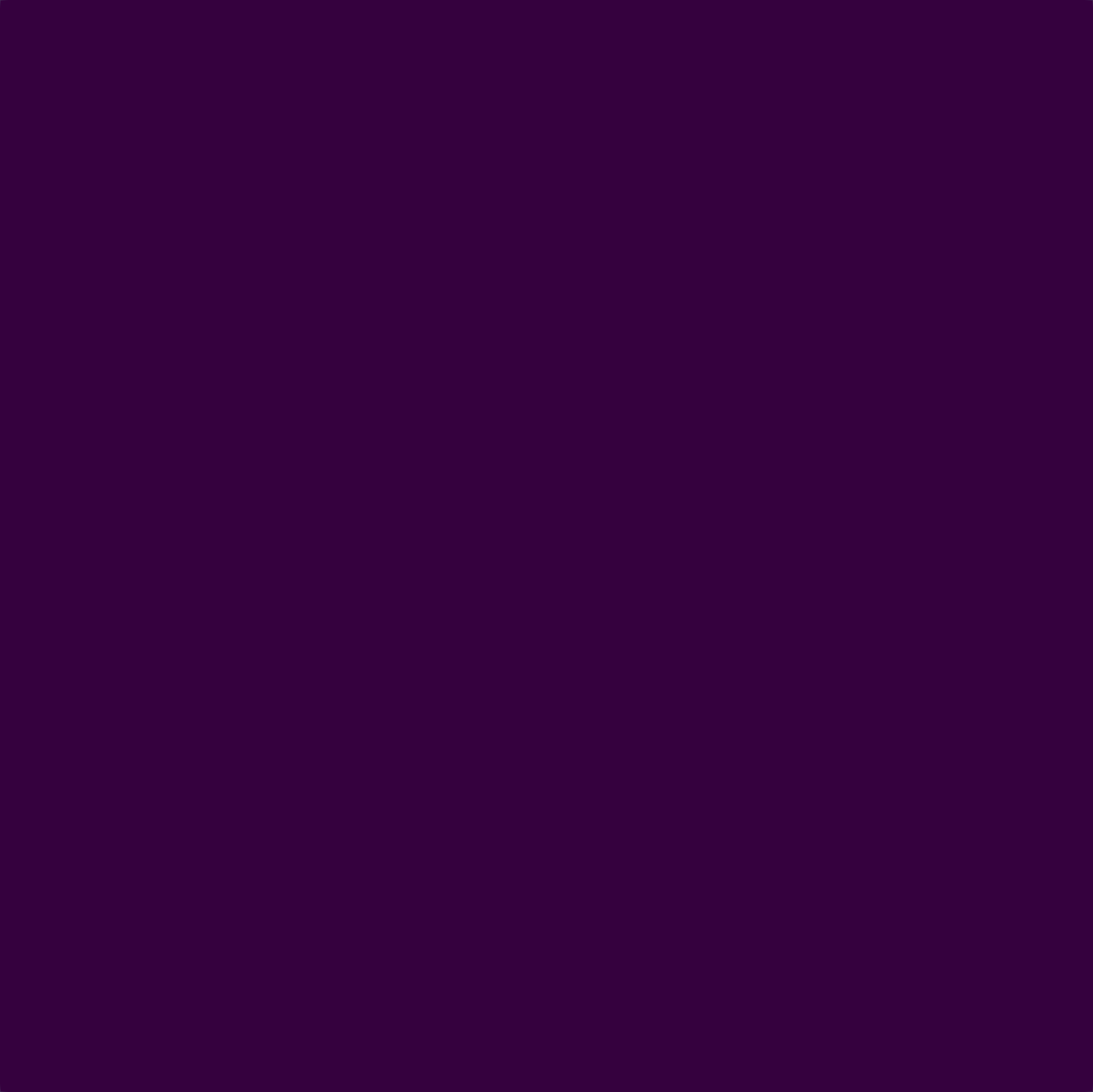};
    			\addplot graphics [xmin=7.5, xmax=12.5, ymin=-2.5, ymax=-0.357] {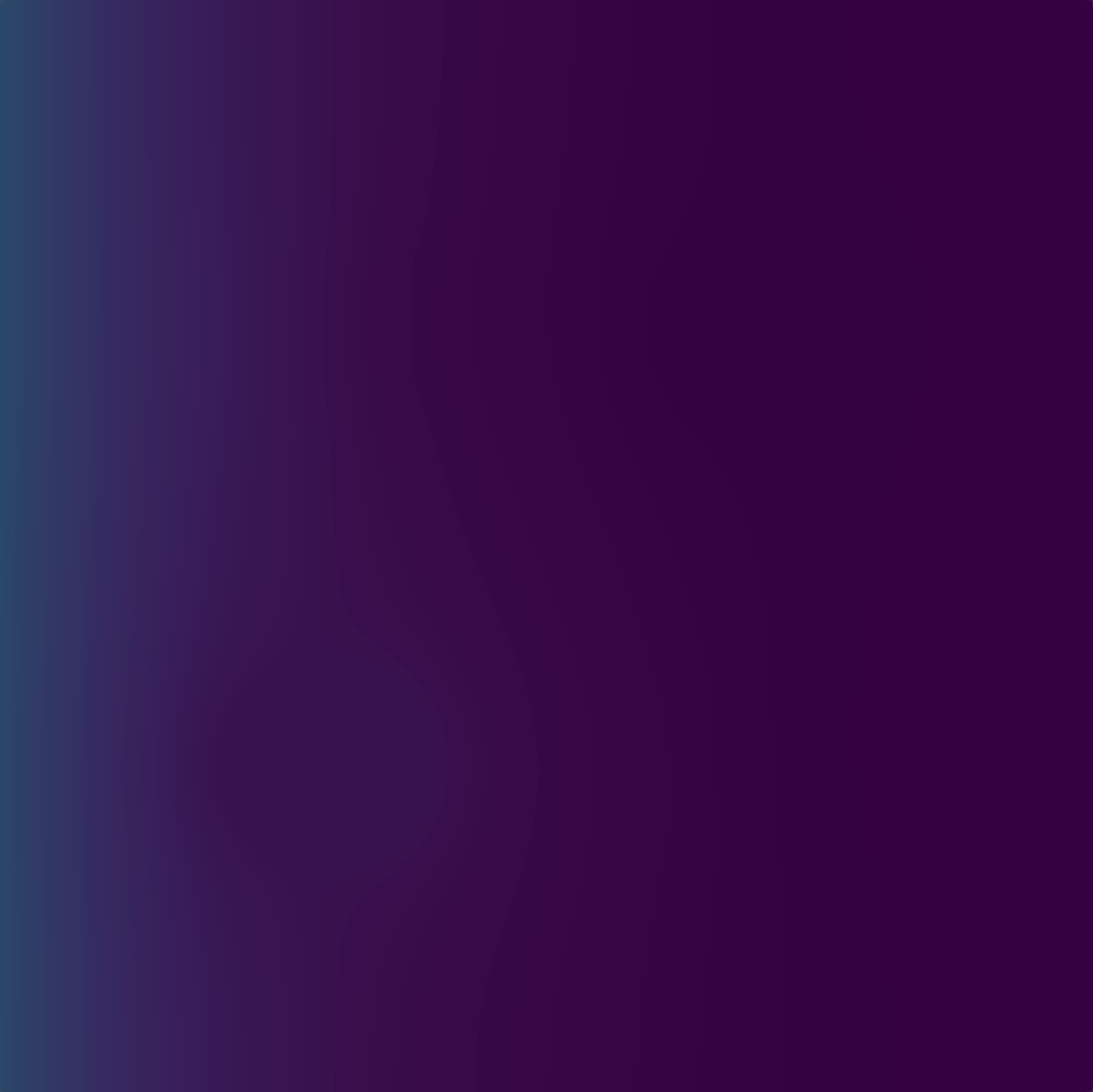};
    			\addplot graphics [xmin=17.5, xmax=22.5, ymin=-2.5, ymax=-0.357] {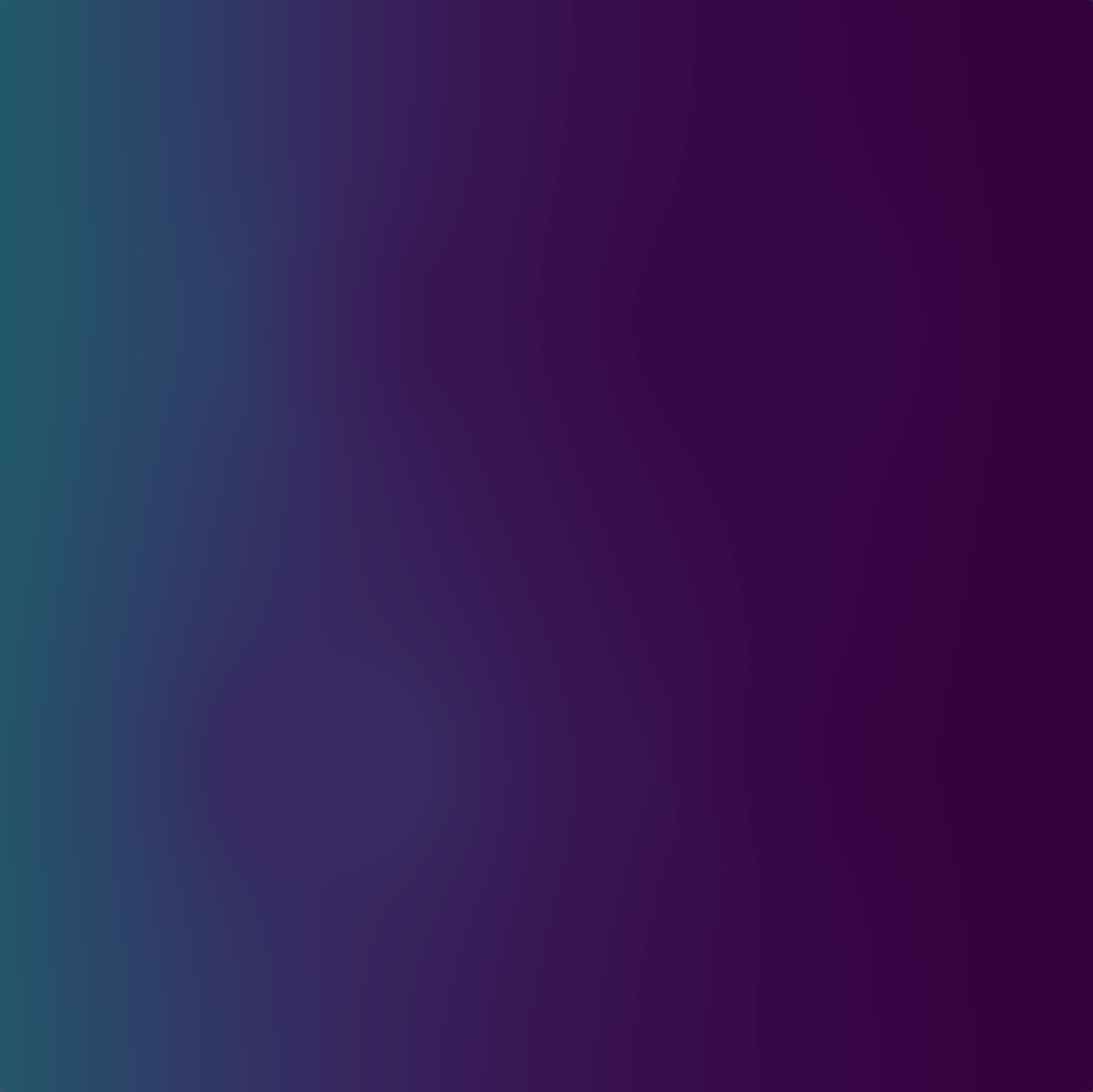};
    			\addplot graphics [xmin=27.5, xmax=32.5, ymin=-2.5, ymax=-0.357] {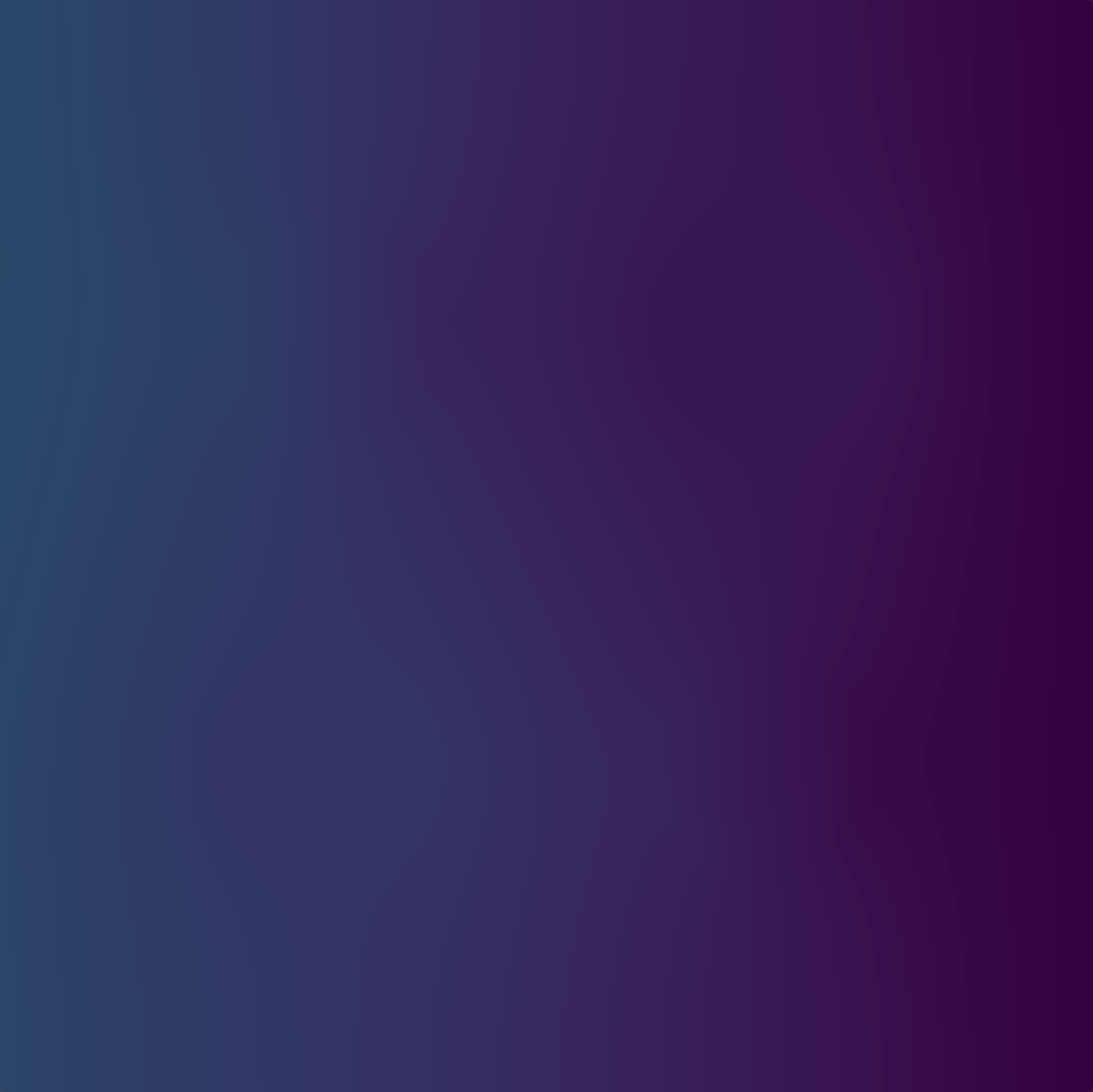};
    			\addplot graphics [xmin=35, xmax=40, ymin=-2.5, ymax=-0.357] {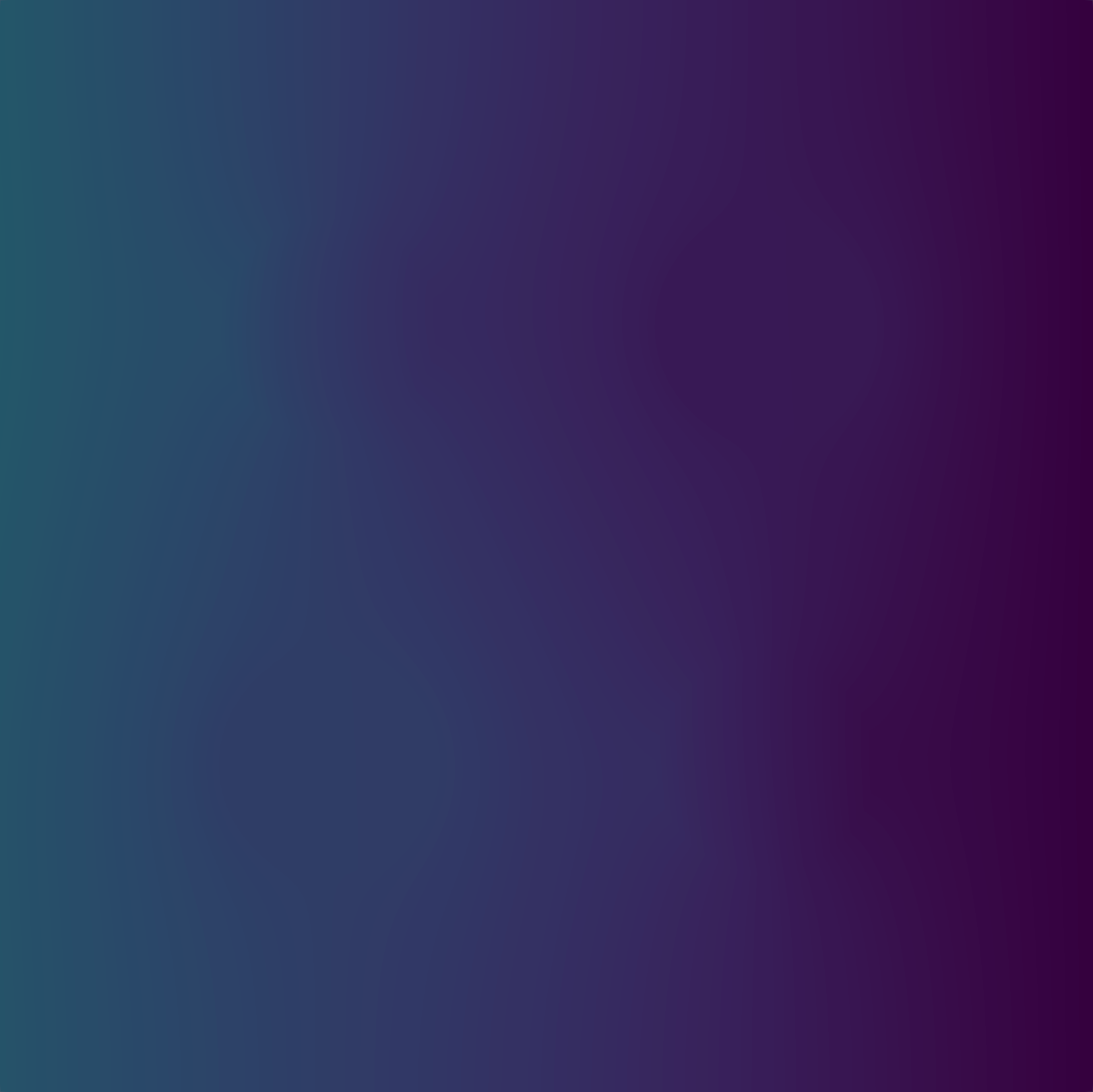};
    			\addplot graphics [xmin=41.25, xmax=46.25, ymin=-2.5, ymax=-0.357] {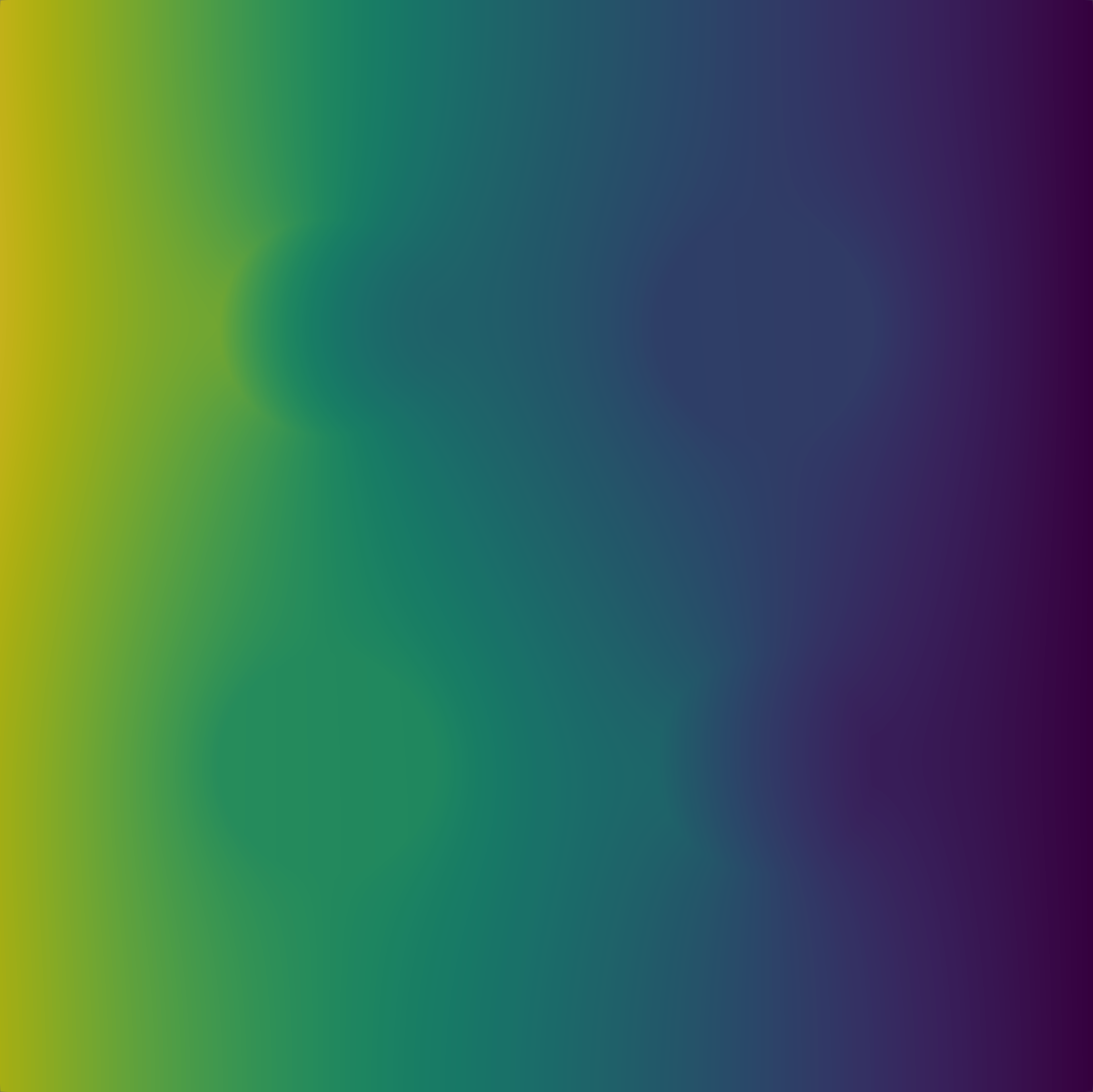};
    			\addplot graphics [xmin=47.5, xmax=52.5, ymin=-2.5, ymax=-0.357] {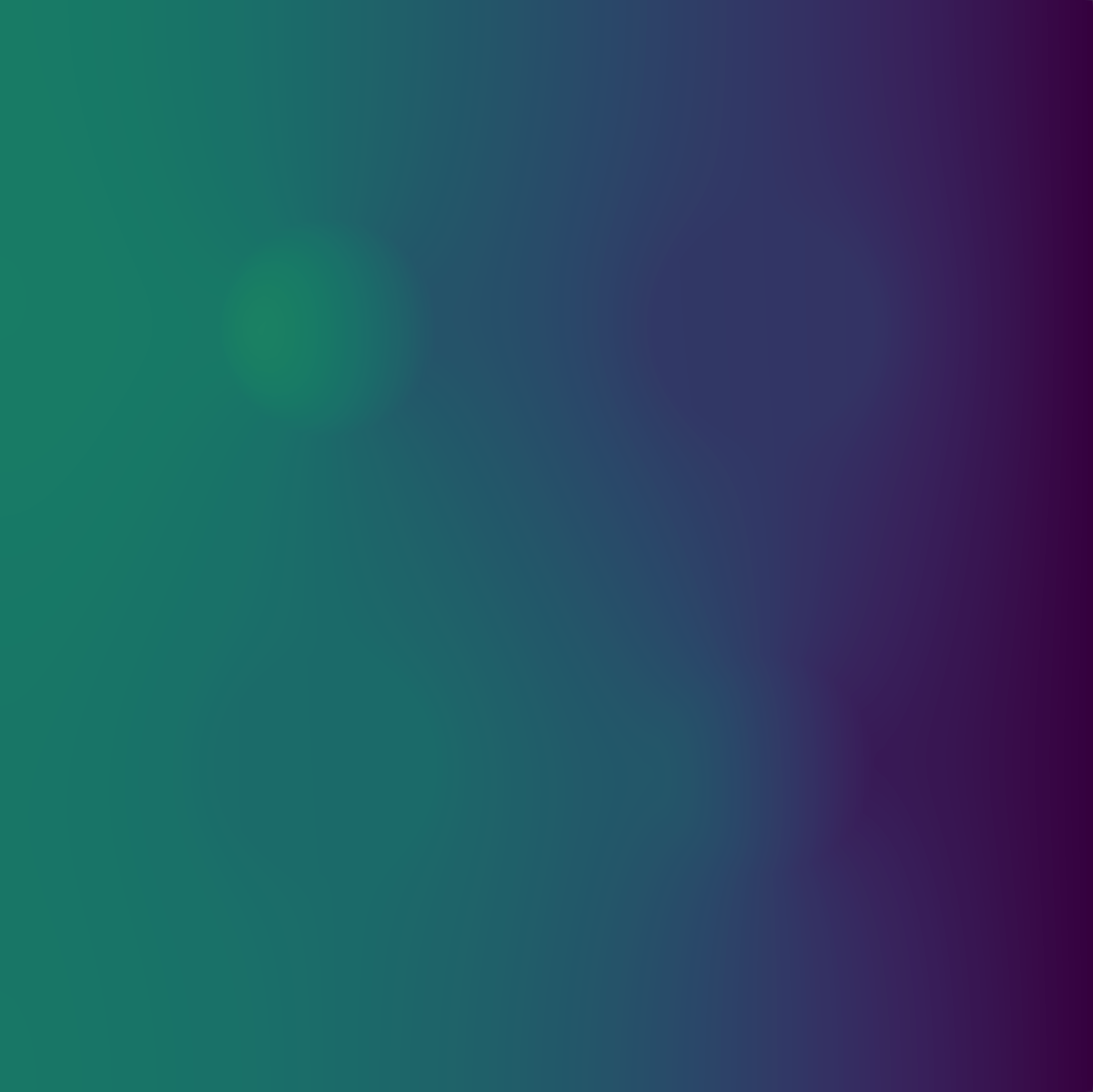};
    			\node (sol-0-top) at (axis cs:0,-0.357) {};
    			\node (sol-0-bottom) at (axis cs:0,-2.5) {};
    			\node (sol-10-top) at (axis cs:10,-0.357) {};
    			\node (sol-10-bottom) at (axis cs:10,-2.5) {};
    			\node (sol-20-top) at (axis cs:20,-0.357) {};
    			\node (sol-20-bottom) at (axis cs:20,-2.5) {};
    			\node (sol-30-top) at (axis cs:30,-0.357) {};
    			\node (sol-30-bottom) at (axis cs:30,-2.5) {};
    			\node (sol-40-top) at (axis cs:37.4,-0.357) {};
    			\node (sol-40-bottom) at (axis cs:37.5,-2.5) {};
    			\node (sol-48-top) at (axis cs:45,-0.357) {};
    			\node (sol-48-bottom) at (axis cs:45,-2.5) {};
    			\node (sol-50-top) at (axis cs:50,-0.357) {};
    			\node (sol-50-bottom) at (axis cs:50,-2.5) {};
    		\end{axis}
    		\begin{axis}[
    			name=colorbar,
    			anchor=north,
    			at=(plots.south),
    			hide axis,
    			scale only axis,
    			yshift=0.75cm,
    			height=0.5cm,
    			width=14cm,
    			colorbar,
    			xmin=0, xmax=1, ymin=0, ymax=1,
    			colormap/viridis,
    			point meta min=0,
    			point meta max=0.52020485001641,
    			colorbar horizontal,
    			colorbar style={
    				scaled y ticks = false,
    				yticklabel style={/pgf/number format/.cd,precision=5},
    				xtick={0,0.05,0.1,0.15,0.2,0.25,0.3,0.35,0.4,0.45,0.5},
    				xticklabels={0,0.05,0.1,0.15,0.2,0.25,0.3,0.35,0.4,0.45,0.5},
    			},
    			clip=false,
    			]
    			\addplot [draw=none] coordinates {(0,0) (1,1)};
    		\end{axis}
    		\begin{axis}[
    			name=outputs,
    			anchor=north,
    			at=(colorbar.south),
    			yshift=-1.5cm,
    			scale only axis,
    			width=14cm,
    			height=3cm,
    			DefaultStyle,
    			xmin=0,
    			xmax=50,
    			clip=false,
    			legend style={yshift=-3.55cm, xshift=1.5cm,},
    			legend columns=4,
    			xtick={0,5,...,50},
    			xticklabels={0,0.1,0.2,0.3,0.4,0.5,0.6,0.7,0.8,0.9,1},
    			xlabel={Time~$t$},
    			ylabel={Output},
    			ylabel shift=-4pt,
    			ymin=-0.02,
    			ymax=0.4,
    			ytick={0,0.1,...,0.4},
    			]
    			\node[label=right:{Target}, draw, fill, circle, inner sep=1.25pt] (target) at (50,0.25) {};
    			\addplot[thick, color=viridisYellow] table[mark=none, x expr=\coordindex, y index=0] {data/output.txt};
    			\addlegendentry{$y_1(t;\mu)$}
    			\addplot[thick, color=viridisBlue] table[mark=none, x expr=\coordindex, y index=1] {data/output.txt};
    			\addlegendentry{$y_2(t;\mu)$}
    			\addplot[thick, color=viridisGreen] table[mark=none, x expr=\coordindex, y index=2] {data/output.txt};
    			\addlegendentry{$y_3(t;\mu)$}
    			\addplot[thick, color=matchingRed] table[mark=none, x expr=\coordindex, y index=3] {data/output.txt};
    			\addlegendentry{$y_4(t;\mu)$}
    			\node (node-0) at (0,\firstvalueout) {};
    			\node (node-10) at (10,\secondvalueout) {};
    			\node (node-20) at (20,\thirdvalueout) {};
    			\node (node-30) at (30,\fourthvalueout) {};
    			\node (node-40) at (40,\fifthvalueout) {};
    			\node (node-48) at (48,\intermediatevalueout) {};
    			\node (node-50) at (50,\sixthvalueout) {};
    		\end{axis}
    		\draw[->, >=stealth, thick] (sol-0-top) to[out=75, in=-75] (val-0);
    		\draw[->, >=stealth, thick] (sol-10-top) to[out=75, in=-75] (val-10);
    		\draw[->, >=stealth, thick] (sol-20-top) to[out=75, in=-75] (val-20);
    		\draw[->, >=stealth, thick] (sol-30-top) to[out=75, in=-75] (val-30);
    		\draw[->, >=stealth, thick] (sol-40-top) to[out=75, in=-105] (val-40);
    		\draw[->, >=stealth, thick] (sol-48-top) to[out=75, in=-90] (val-48);
    		\draw[->, >=stealth, thick] (sol-50-top) to[out=75, in=-75] (val-50);
    		\draw[->, >=stealth, thick] (sol-0-bottom) to[out=-75, in=75] (node-0);
    		\draw[->, >=stealth, thick] (sol-10-bottom) to[out=-75, in=75] (node-10);
    		\draw[->, >=stealth, thick] (sol-20-bottom) to[out=-75, in=75] (node-20);
    		\draw[->, >=stealth, thick] (sol-30-bottom) to[out=-75, in=75] (node-30);
    		\draw[->, >=stealth, thick] (sol-40-bottom) to[out=-120, in=105] (node-40);
    		\draw[->, >=stealth, thick] (sol-48-bottom) to[out=-75, in=105] (node-48);
    		\draw[->, >=stealth, thick] (sol-50-bottom) to[out=-75, in=75] (node-50);
    	\end{tikzpicture}
    	\caption{Optimal control (top) and output values (bottom) for the parameter~$\mu=(100,0.1)$ in the cookie baking problem. Further, the corresponding solution states at time~$t\in\{0,0.2,0.4,0.6,0.8,0.96,1\}$ are shown in the middle row of the figure.}
    	\label{fig:optimal-control-and-outputs}
    \end{figure}
    
    The optimal final time adjoint states for the parameters~$\mu=(100, 0.1)$ and~$\mu=(0.5,50)$ are shown in~\Cref{fig:optimal-final-time-adjoint}. We observe that the adjoint state is mostly zero on the background domain~$\Omega_0$ and only differs in and close to the cookies for the two parameters. This observation will also become important in the next section.
    \par
    \begin{figure}[htb]
    	\begin{minipage}{.47\textwidth}
    		\begin{tikzpicture}
    			\node at (3,3) {\includegraphics[width=6cm]{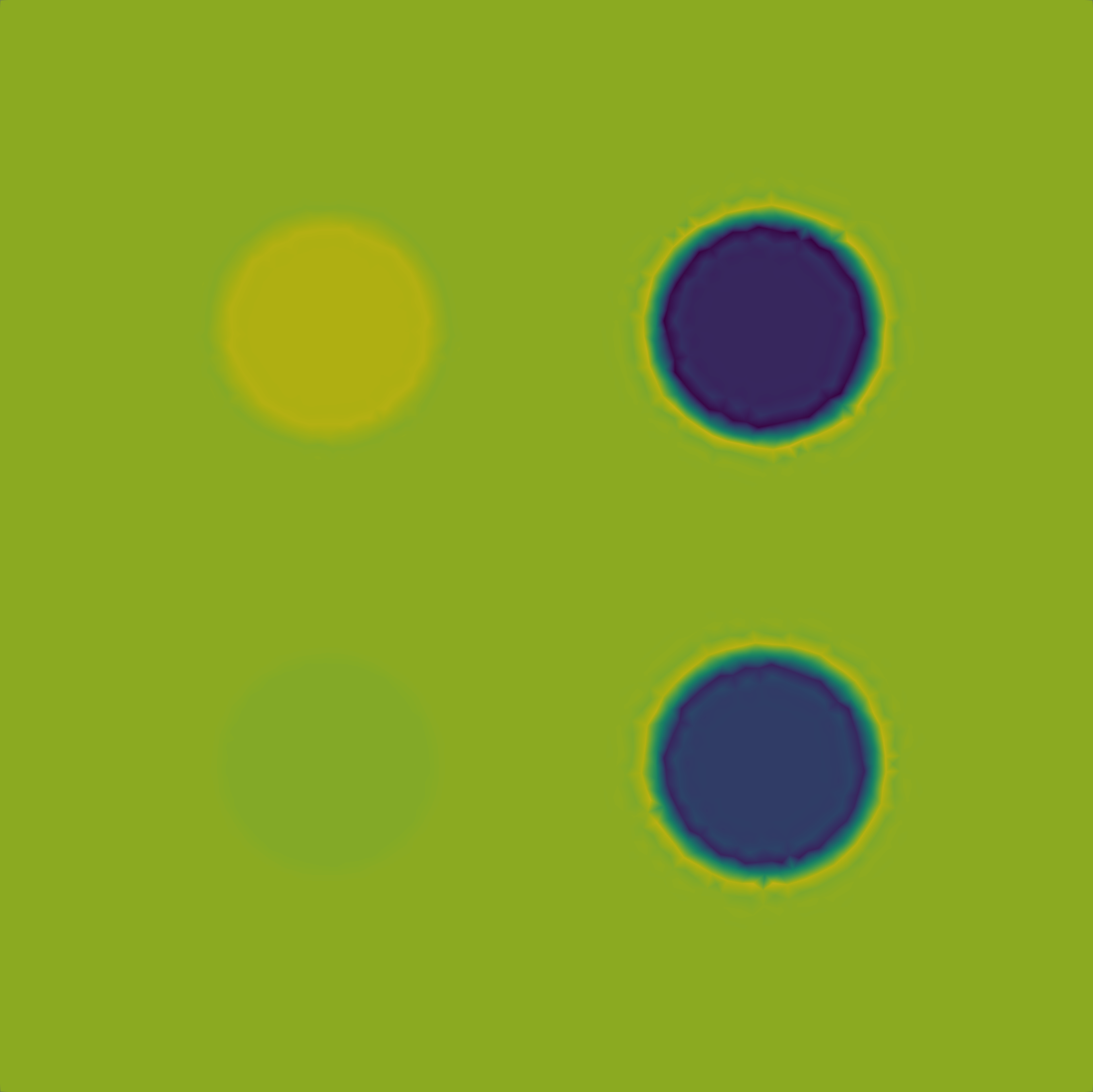}};
    			\pic[scale=6]{cookiesdomain};
    		\end{tikzpicture}
    	\end{minipage}%
    	\hfill
    	\begin{minipage}{.53\textwidth}
    		\begin{tikzpicture}
    			\node at (3,3) {\includegraphics[width=6cm]{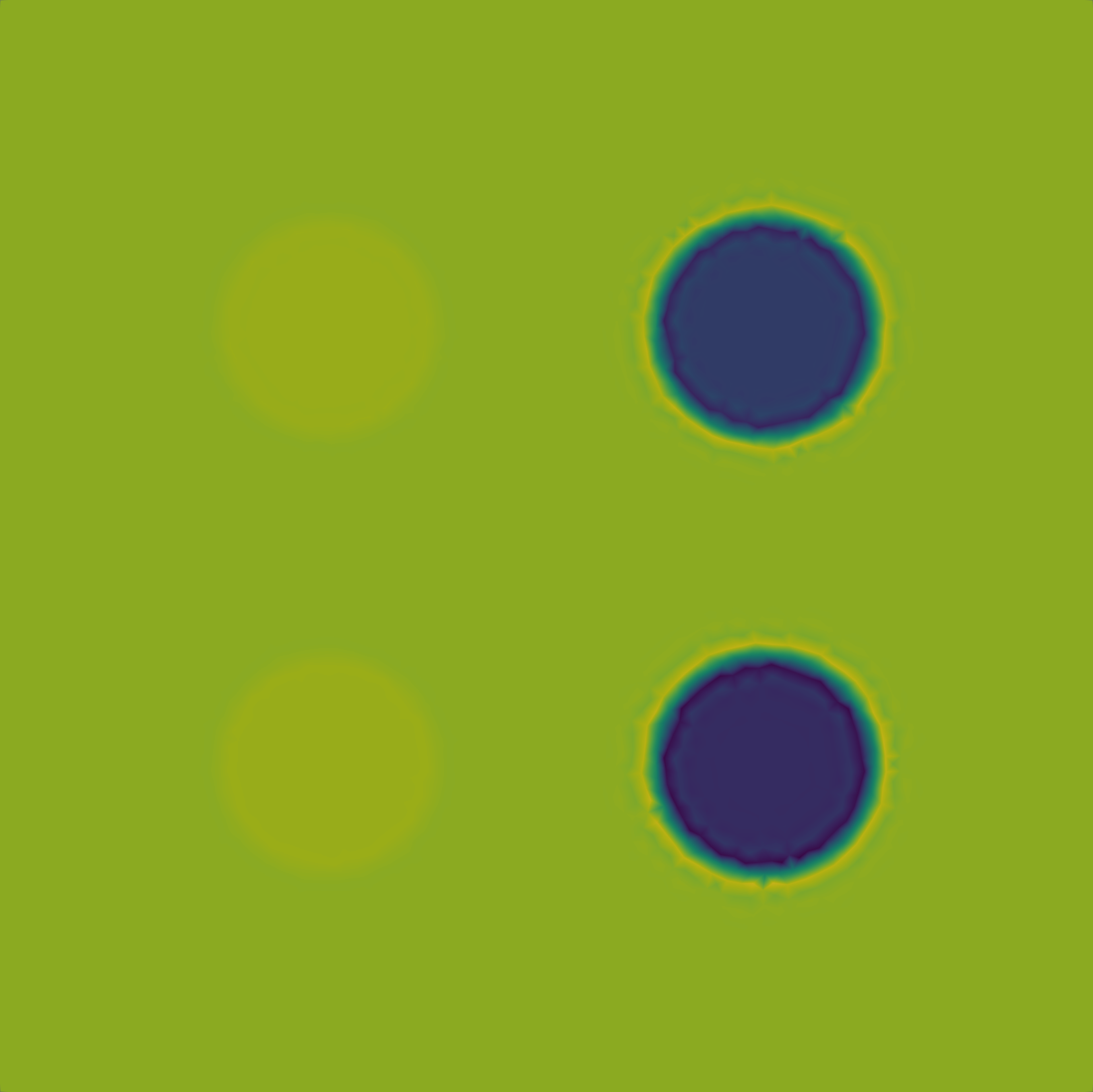}};
    			\pic[scale=6]{cookiesdomain};
    			\begin{axis}[
    				name=colorbar,
    				hide axis,
    				scale only axis,
    				xshift=6.25cm,
    				height=6cm,
    				width=0.5cm,
    				colorbar,
    				xmin=0, xmax=1, ymin=0, ymax=1,
    				colormap/viridis,
    				point meta min=-5.424152017032543,
    				point meta max=0.7126529779581272,
    				colorbar style={
    					scaled y ticks = false,
    					yticklabel style={/pgf/number format/.cd,precision=5},
    					ytick={-5,-4,-3,-2,-1,0},
    				},
    				clip=false,
    				]
    				\addplot [draw=none] coordinates {(0,0) (1,1)};
    			\end{axis}
    		\end{tikzpicture}
    	\end{minipage}
    	\caption{Optimal final time adjoint state~$\optFTA{\mu}$ for the parameter~$\mu=(100, 0.1)$ (left) and the parameter~$\mu=(0.5, 50)$ (right).}
    	\label{fig:optimal-final-time-adjoint}
    \end{figure}
    
    \Cref{tab:cookie-baking-full-model-parameters} gives an overview of different quantities related to the full order model for the cookie baking problem, such as the final time~$T$, the parameter domain~$\params$, the discretization sizes and the number of affine components in the decomposition of the system matrices and initial and target states. In addition, the parameters employed in the different reduction strategies from~\Cref{sec:strategies-computing-fully-reduced-model} are summarized in~\Cref{tab:cookie-baking-reduction-strategies-parameters}. We remark at this point that the training parameters are chosen on a~$\log$-uniform grid in the parameter set~$\params$ with the same number of samples in each direction.
    \par
    \begin{table}[htb]
    	\centering
    	\begin{minipage}{.49\textwidth}
    		\centering
    		\begin{tabular}{l c | c}
    			\toprule
    			\multicolumn{2}{l|}{Parameter} & Value \\ \midrule \midrule
    			\multicolumn{2}{l|}{Final time~$T$} & $1$ \\
    			\multicolumn{2}{l|}{Parameter domain~$\params$} & $[10^{-1},10^2]^2$ \\
    			\multicolumn{2}{l|}{State dimension~$n$} & $6714$ \\
    			\multicolumn{2}{l|}{Number of controls~$m$} & $1$ \\
    			\multicolumn{2}{l|}{Number of time steps~$\nt$} & $50$ \\
    			\midrule
    			\multirow{4}{3cm}{Number of affine components} & $Q_A$ & $3$ \\
    			& $Q_B$ & $1$ \\
    			& $Q_{x^0}$ & $1$ \\
    			& $Q_{x^T}$ & $1$ \\
    			\bottomrule
    		\end{tabular}
    		\caption{Parameters of the full order model in the cookie baking example.}
    		\label{tab:cookie-baking-full-model-parameters}
    	\end{minipage}%
    	\hfill
    	\begin{minipage}{.49\textwidth}
    		\centering
    		\begin{tabular}{l c | c}
    			\toprule
    			\multicolumn{2}{l|}{Parameter} & Value \\ \midrule \midrule
    			\multirow{4}{3cm}{Tolerances for reduction strategies} & $\epssys$ & $10^{-9}$ \\
    			& $\epsinner$ & $10^{-5}$ \\
    			& $\epsFTA$ & $10^{-4}$ \\
    			& $\eps$ & $10^{-4}$ \\
    			\midrule
    			Initial parameter & $\mu_\text{init}$ & $(1,1)\in\params$ \\
    			\multirow{4}{3cm}{Number of training parameters} & $\card{\paramstrainsys}$ & $100$ \\
    			& $\card{\paramstrainfta}$ & $400$ \\
    			& $\card{\paramstrain}$ & $400$ \\
    			& $\card{\paramstraininner}$ & $400$ \\
    			\bottomrule
    		\end{tabular}
    		\caption{Parameters for the reduction strategies applied to the cookie baking example.}
    		\label{tab:cookie-baking-reduction-strategies-parameters}
    	\end{minipage}%
    \end{table}
    
    \subsection{Computational results}\label{sec:computational-results}
    In this section we present the numerical results for the different reduction strategies and resulting fully reduced models. In order to guarantee a fair comparison and to show that the combined reduction is indeed beneficial, we consider an additional algorithm: The~\GROM{} (greedy reduced order model) is created by only performing a reduction of the final time adjoint state, i.e.~this reduced model is computed and evaluated as described in~Sections~3.2 and~3.4 in~\cite{kleikamp2024greedy} and was briefly introduced in~\Cref{sec:reduced-model-optimal-adjoint-states}. We remark that computing the reduced solution via the~\GROM{} as well as evaluating the error estimator of the~\GROM{} requires solving the high-dimensional (i.e.~not reduced) primal and adjoint systems several times.
    \par
    The numerical results for the different algorithms described in the previous sections are presented in~\Cref{tab:cookie-baking-results}. We depict reduced basis sizes, offline and online runtimes, as well as true absolute errors in the norm on~$\X$ and error estimations. The error in the control is measured as the temporal norm with respect to the Euclidean inner product. The online timings and errors are given as average values over a test set~$\paramstest$ consisting of~$\card{\paramstest}=50$ parameters. These test parameters are drawn randomly from~$\params$ according to a~$\log$-uniform distribution.
    \par
    \begin{table}[htb]
    	\centering
    	\resizebox{\columnwidth}{!}{%
    		\begin{tabular}{l p{3.26cm} | *6c}
    			\toprule
    			&  & {\FOM{}} & {\GROM{}} & {\SRGROM} & {\GSRROM{}} & {\GCROM{}} & {\DGROM{}} \\ \midrule \midrule
    			\multirow{3}{*}{\rotatebox{90}{\parbox[c]{1.4cm}{\medskip\centering Offline timings}}} & Overall [s] & - & $16687.64$ & $8836.19$ & $32742.67$ & $4005.50$ & $13815.39$ \\
    			\cmidrule{2-8}
    			& System reductions [s] & - & - & $7872.36$ & $16055.03$ & - & - \\
    			& Greedy algorithm [s] & - & - & $963.83$ & $16687.64$ & - & - \\
    			\midrule
    			\multirow{3}{*}{\rotatebox{90}{\parbox[c]{1.28cm}{\centering Reduced sizes}}} & $\dimRedSpaceFTA$ & - & $4$ & $4$ & $4$ & $4$ & $4$ \\
    			& $\kpr$ & - & - & $178$ & $178$ & $180$ & $505$ \\
    			& $\kad$ & - & - & $172$ & $176$ & $148$ & $371$ \\
    			\midrule
    			\multirow{7}{*}{\rotatebox{90}{\parbox[c]{4.5cm}{\bigskip\centering Online results}}} & Runtime solving [s] & $81.80$ & $11.54$ & $0.62$ & $0.63$ & $0.55$ & $3.87$ \\
    			& Speedup & - & $7.09$ & $131.77$ & $129.19$ & $149.10$ & $21.11$ \\
    			& Runtime error estimator [s] & - & $3.92$ & $0.49$ & $0.50$ & $0.45$ & $3.37$ \\
    			\cmidrule{2-8}
    			& Error final time adjoint & - & $3.10\cdot10^{-8}$ & $3.83\cdot10^{-8}$ & $3.11\cdot10^{-8}$ & $1.48\cdot10^{-7}$ & $3.10\cdot10^{-8}$ \\
    			& \emph{Estimated} error final time adjoint & - & $3.97\cdot10^{-8}$ & $3.59\cdot10^{-6}$ & $3.28\cdot10^{-6}$ & $9.23\cdot10^{-6}$ & $4.57\cdot10^{-7}$ \\
    			& Estimator efficiency & - & $1.80$ & $124.31$ & $157.17$ & $119.04$ & $22.10$ \\
    			& Error control & - & $1.36\cdot10^{-9}$ & $1.20\cdot10^{-5}$ & $1.28\cdot10^{-5}$ & $5.07\cdot10^{-6}$ & $2.54\cdot10^{-6}$ \\
    			\bottomrule
    		\end{tabular}
    	}
    	\caption{Numerical results of all considered full and reduced models when tested on the cookie baking problem. The values are rounded appropriately where necessary.}
    	\label{tab:cookie-baking-results}
    \end{table}
    The numerical results confirm a large speedup of all fully reduced models compared to the full-order model and the~\GROM{} method. In particular, compared to the~\FOM{} the fully reduced models all achieve average speedups between~$20$ and~$150$. We further observe that the error estimator for the fully reduced models can be evaluated much faster. At the same time, the fully reduced models~\SRGROM{}, \GSRROM{} and~\DGROM{} obtain true errors in the final time adjoint state of the same magnitude as the~\GROM{} without any system reduction. The~\GCROM{} performs worse than the other reduced models and looses about one order of magnitude in terms of accuracy. The error estimator of the fully reduced models possesses an average efficiency between~$20$ and~$160$ for all models which corresponds to a mild overestimation of the error. When comparing the different fully reduced models with each other, we observe that the average approximation error of the~\GCROM{} is larger than the error of the other fully reduced models while the primal reduced basis is a bit smaller. This results in a slightly larger speedup compared to the other methods. Moreover, since it is not required to solve the optimal control problem exactly for all parameters within the respective training set, the offline construction of the~\GCROM{} is much faster compared to the other algorithms, although the same number of training parameters was used, see~\Cref{tab:cookie-baking-reduction-strategies-parameters}. Furthermore, the offline time of the~\SRGROM{} is about half of the offline time of the~\GROM{}, whereas the~\SRGROM{} requires about twice as much time to be constructed. The~\GCROM{} is computed in about one-eighth of the time required to build the~\GSRROM{}. The~\DGROM{} performs different compared to the other fully reduced models in several respects. First of all, the reduced bases sizes for the system reduction are much larger which results in a speedup of only about~$21$.
    This behavior was expected since the reduced state spaces approximate the Gramian application
    over the whole training parameter set instead of only considering the Gramian application
    to states from the reduced space of optimal final time adjoints~(c.f.\ \Cref{sec:double-greedy}).
    On the other hand, the~\DGROM{} achieves errors in the final time adjoint similar to the~\GROM{}~(which uses the
    full high-dimensional state trajectories) and the best efficiency of the error estimator
    among all fully reduced models. This low overestimation by the error estimator is an immediate
    consequence of the inner greedy loop that ensures an accurate application
    of the Gramian to all final time states available in the reduced space~$\spaceVred{\dimRedSpaceFTA}$ by design.
    \par
    It is particularly conspicuous that the dimension~$\dimRedSpaceFTA$ of the reduced space for final time adjoint states equals~$4$ for all reduced models. This is because in the example considered here, a deviation from a prescribed output is penalized by means of the operator~$\Mop$. As can be deduced from~\cref{equ:linear-system-optimal-final-time-adjoint}, we have~$\optFTA{\mu}\in\im{(\Riesz{\X}\E^*)^{-1}\Mop}$ for all parameters~$\mu\in\params$, i.e.~the optimal final time adjoint state lies in the image of the operator~$(\Riesz{\X}\E^*)^{-1}\Mop$. This can be seen by rearranging the linear system for~$\optFTA{\mu}$ in order to obtain
    \begin{align*}
    	\optFTA{\mu}=(\Riesz{\X}\E^*)^{-1}\Mop\left(\StateTrans{\mu}{T}{0}x_\mu^0-x_\mu^T-\Gramian{\mu}\optFTA{\mu}\right)\in\im{(\Riesz{\X}\E^*)^{-1}\Mop}.
    \end{align*}
    However, in this numerical test case we have~$\rank{\Mop}=\rank{C^\top C}=4$ and therefore a reduced space of dimension~$\dimRedSpaceFTA=4$ for the final time adjoint states should be sufficient, since~$\rank{(\Riesz{\X}\E^*)^{-1}\Mop}\leq 4$. We observe this result also in the numerical experiments where all reduced models involving a system reduction can also benefit severely from the additional reduction of the final time adjoint states. Consequently, building a reduced space for the optimal final time adjoints is particularly useful when the deviation in a low-dimensional output is penalized in the objective functional via the matrices~$\C$ and~$\Mop$, respectively.
    \par
    Moreover, we present in~\Cref{fig:greedy-results} the maximum estimated errors over the training parameter set in the respective steps of the greedy algorithms when constructing the~\SRGROM{}, the~\GSRROM{}, the~\GCROM{} and the~\DGROM{}, i.e.~the estimated error corresponding to the selected parameter. The greedy algorithm in the computation of the~\GROM{} is the same as for the~\GSRROM{} and is therefore not shown separately. We recall at this point that during the greedy algorithm for the~\SRGROM{} and the~\GSRROM{}/\GROM{} only the matrix~$\Vred$ for the final time adjoint states is extend. In contrast, in the greedy for the~\GCROM{} and the~\DGROM{} also the reduced bases for the system dynamics are improved.
    \par
    \begin{figure}[htb]
    	\centering
    	\begin{tikzpicture}
    		\begin{axis}[
    			name=greedy,
    			DefaultStyle,
    			width=.48\textwidth,
    			xlabel={Greedy step~$\dimRedSpaceFTA$},
    			xmin=-0.1, xmax=11.1,
    			xtick={0,...,11},
    			xticklabels={0,...,11},
    			ytick={1e-7,1e-6,1e-5,1e-4,1e-3,1e-2,1e-1,1e0,1e1},
    			ymode=log,
    			log basis y={10},
    			ylabel={True and estimated errors},
    			legend style={yshift=115pt, xshift=-5.5pt},
    			]
    			\addplot[very thick, mark=none, colorUnused, dashed, samples=2] coordinates {(-0.1,1e-4) (11.1,1e-4)};
    			\addlegendentry{$\eps=10^{-4}$: Greedy tolerance}
    			\addplot[thick, colorSRGROM, mark=triangle*, mark size=3, mark options={solid, fill opacity=0.5}] table[y index=1] {data/SR-G-ROM.txt};
    			\addlegendentry{\SRGROM{} (estimator: $\FullRedEstFTA{\mu}{\dimRedSpaceFTA}$)}
    			\addplot[thick, colorSRGROM, mark=triangle, mark size=1.5, mark options={solid, fill opacity=0.5}, dashed] table[y index=2] {data/SR-G-ROM.txt};
    			\addlegendentry{\SRGROM{}: Maximum true error}
    			\addplot[thick, colorGSRROM, mark=square*, mark size=3, mark options={solid, fill opacity=0.5}] table[y index=1] {data/G-SR-ROM.txt};
    			\addlegendentry{\GSRROM{}/\GROM{} (estimator: $\EstFTA{\mu}{\dimRedSpaceFTA}$)}
    			\addplot[thick, colorGSRROM, mark=square, mark size=1.5, mark options={solid, fill opacity=0.5}, dashed] table[y index=2] {data/G-SR-ROM.txt};
    			\addlegendentry{\GSRROM{}/\GROM{}: Maximum error}
    			\addplot[thick, colorGCROM, mark=diamond*, mark size=3, mark options={solid, fill opacity=0.5}] table[y index=1] {data/GC-ROM.txt};
    			\addlegendentry{\GCROM{} (estimator: $\FullRedEstFTA{\mu}{\dimRedSpaceFTA}$)}
    			\addplot[thick, colorGCROM, mark=diamond, mark size=1.5, mark options={solid, fill opacity=0.5}, dashed] table[y index=2] {data/GC-ROM.txt};
    			\addlegendentry{\GCROM{}: Maximum true error}
    			\addplot[thick, colorDGROM, mark=*, mark size=3, mark options={solid, fill opacity=0.5}] table[y index=1] {data/DG-ROM.txt};
    			\addlegendentry{\DGROM{} (estimator: $\FullRedEstFTA{\mu}{\dimRedSpaceFTA}$)}
    			\addplot[thick, colorDGROM, mark=o, mark size=1.5, mark options={solid, fill opacity=0.5}, dashed] table[y index=2] {data/DG-ROM.txt};
    			\addlegendentry{\DGROM{}: Maximum true error}
    		\end{axis}
    		\begin{axis}[
    			DefaultStyle,
    			anchor=west,
    			at=(greedy.east),
    			xshift=2cm,
    			width=.48\textwidth,
    			xlabel={Greedy step~$\dimRedSpaceFTA$},
    			xmin=-0.1, xmax=11.1,
    			xtick={0,...,11},
    			xticklabels={0,...,11},
    			ytick={1e-7,1e-6,1e-5,1e-4,1e-3,1e-2,1e-1,1e0,1e1},
    			ymode=log,
    			log basis y={10},
    			ylabel={Estimated errors},
    			legend style={yshift=111pt, xshift=6pt},
    			]
    			\addplot[very thick, mark=none, colorUnused, dashed, samples=2] coordinates {(-0.1,1e-4) (11.1,1e-4)};
    			\addlegendentry{$\eps=10^{-4}$: Greedy tolerance}
    			\addplot[very thick, mark=none, colorUnused, dashdotted, samples=2] coordinates {(-0.1,1e-6) (11.1,1e-6)};
    			\addlegendentry{$\epsFTA=10^{-6}$: Greedy tolerance for final time adjoints}
    			\addplot[very thick, mark=none, colorUnused, dotted, samples=2] coordinates {(-0.1,1e-5) (11.1,1e-5)};
    			\addlegendentry{$\epsinner=10^{-5}$: Inner greedy tolerance}
    			\addplot[thick, colorSRGROM, mark=x, mark size=3, mark options={solid, fill opacity=0.5}] table[y index=6] {data/SR-G-ROM.txt};
    			\addlegendentry{\SRGROM{}: Maximum Gramian error}
    			\addplot[thick, colorSRGROM, mark=pentagon, mark size=3, mark options={solid, fill opacity=0.5}] table[y index=7] {data/SR-G-ROM.txt};
    			\addlegendentry{\SRGROM{}: Maximum reduced error estimate}
    			\addplot[thick, colorGCROM, mark=x, mark size=3, mark options={solid, fill opacity=0.5}] table[y index=6] {data/GC-ROM.txt};
    			\addlegendentry{\GCROM{}: Maximum Gramian error}
    			\addplot[thick, colorGCROM, mark=pentagon, mark size=3, mark options={solid, fill opacity=0.5}] table[y index=7] {data/GC-ROM.txt};
    			\addlegendentry{\GCROM{}: Maximum reduced error estimate}
    			\addplot[thick, colorDGROM, mark=x, mark size=3, mark options={solid, fill opacity=0.5}] table[y index=6] {data/DG-ROM.txt};
    			\addlegendentry{\DGROM{}: Maximum Gramian error}
    			\addplot[thick, colorDGROM, mark=pentagon, mark size=3, mark options={solid, fill opacity=0.5}] table[y index=7] {data/DG-ROM.txt};
    			\addlegendentry{\DGROM{}: Maximum reduced error estimate}
    		\end{axis}
    	\end{tikzpicture}
    	\caption{The left plot shows the maximum estimated errors over the iterations of the greedy algorithm within the construction of the~\SRGROM{}, the~\GSRROM{}/\GROM{} and the~\GCROM{}. For the~\DGROM{} we here show the outer greedy iteration (see~\Cref{fig:double-greedy-results} for results on the inner iteration). The plot on the right-hand side presents maximum values of the components of the fully reduced error estimator for the~\SRGROM{}, the~\GCROM{} and the~\DGROM{} over the respective training set. The Gramian error in the step~$\dimRedSpaceFTA=0$ is zero (and therefore not shown in the plot on the right) due to the empty reduced basis for the final time adjoint states at initialization of the three fully reduced models.}
    	\label{fig:greedy-results}
    \end{figure}
    
    As expected from the discussion in the previous paragraph, already~$4$~greedy iterations are sufficient for the~\SRGROM{}, the~\GSRROM{}/\GROM{} and the~\DGROM{} to reach the desired tolerance~$\eps$. Since the~\GCROM{} starts with empty bases for the system dynamics, it requires more iterations within the greedy procedure to construct appropriate reduced bases for the system dynamics and the optimal final time adjoints simultaneously. However, as shown in~\Cref{tab:cookie-baking-results}, the total offline costs of the~\GCROM{} are still much smaller compared to the other reduced models since the optimal control problem is solved exactly solely for the selected parameters. In the plot on the right-hand side of~\Cref{fig:greedy-results} we furthermore show the individual error components of the fully reduced error estimator~$\FullRedEstFTA{\mu}{\dimRedSpaceFTA}$ used in the greedy construction of the~\SRGROM{}, the~\GCROM{} and the~\DGROM{}. The error in the free-dynamics of the initial condition, i.e.~$\const\norm{\Mop}_{\Lin{\X}{\dualX}}\EstPrim{\mu}{0}(T)$ in~\cref{equ:a-posteriori-error-estimator-fully-reduced-model}, is zero for all parameters~$\mu\in\params$ since the initial condition is zero in this example. We therefore only display the maximum error in the Gramian~$\max_{\mu\in\paramstrain}\const\norm{\Mop}_{\Lin{\X}{\dualX}}\EstGram{\mu}{\compredFTA{\mu}{\dimRedSpaceFTA}}$ and the reduced error estimate~$\max_{\mu\in\paramstrain}\const\,\RedEstFTA{\mu}{\dimRedSpaceFTA}(\compredFTA{\mu}{\dimRedSpaceFTA})$ over the training set (the same training set~$\paramstrain=\paramstrainfta$ is used in the greedy algorithms for both the~\GCROM{} and the~\DGROM{}). For the~\SRGROM{} the smaller training set considered before for the system reduction is used in the greedy procedure. One observes that the~\SRGROM{} already starts with a Gramian error close to the greedy tolerance. Hence, the previous system reduction is sufficient to obtain a reduced model that fulfills the prescribed tolerance. However, we emphasize here again that this is only possible by choosing the same training sets for the system and the final time adjoint reduction. After~$4$~iterations of the greedy algorithm, the reduced basis for the optimal final time adjoint states has been sufficiently enriched as well such that the greedy iteration stops because the prescribed tolerance~$\epsFTA$ was reached. For the~\GCROM{}, the Gramian error is much larger at the beginning since the system reduction also takes place within the greedy algorithm. Therefore, both errors, the Gramian error and the reduced error estimate, decay over the first~$4$~iterations. Then, since a reduced space of dimension~$4$~is sufficient for the optimal final time adjoint states as discussed above, the estimate of the reduced error estimator becomes much smaller than the prescribed tolerance~$\eps$. The Gramian error, however, requires some additional iterations to become sufficiently small such that the total estimated error reaches the desired tolerance. For the~\DGROM{} we observe that the maximum Gramian error is always below the prescribed tolerance~$\epsinner=10^{-5}$ as anticipated by the double greedy structure of the algorithm. Hence, after~$4$~iterations also the reduced space of final time adjoint states is enriched in such a way that the reduced error estimator drops below the prescribed tolerance and the algorithm terminates. These observations confirm the anticipated behavior of the reduction procedures and reflects the differences between the approaches. Depending on the application at hand, one or the other strategy might be preferable.
    \par
    In~\Cref{fig:double-greedy-results} we present in detail the results of the double greedy algorithm to construct the~\DGROM{}, see~\Cref{alg:DGROM}. In particular, we show the estimated and true Gramian errors over the inner iterations in the left plot. The evolution of the reduced basis sizes for the system reduction is shown in the right plot of~\Cref{fig:double-greedy-results}.
    Furthermore, the outer iterations are highlighted, as well as the maximum estimated and true errors
    in the outer greedy iterations.
    \par
    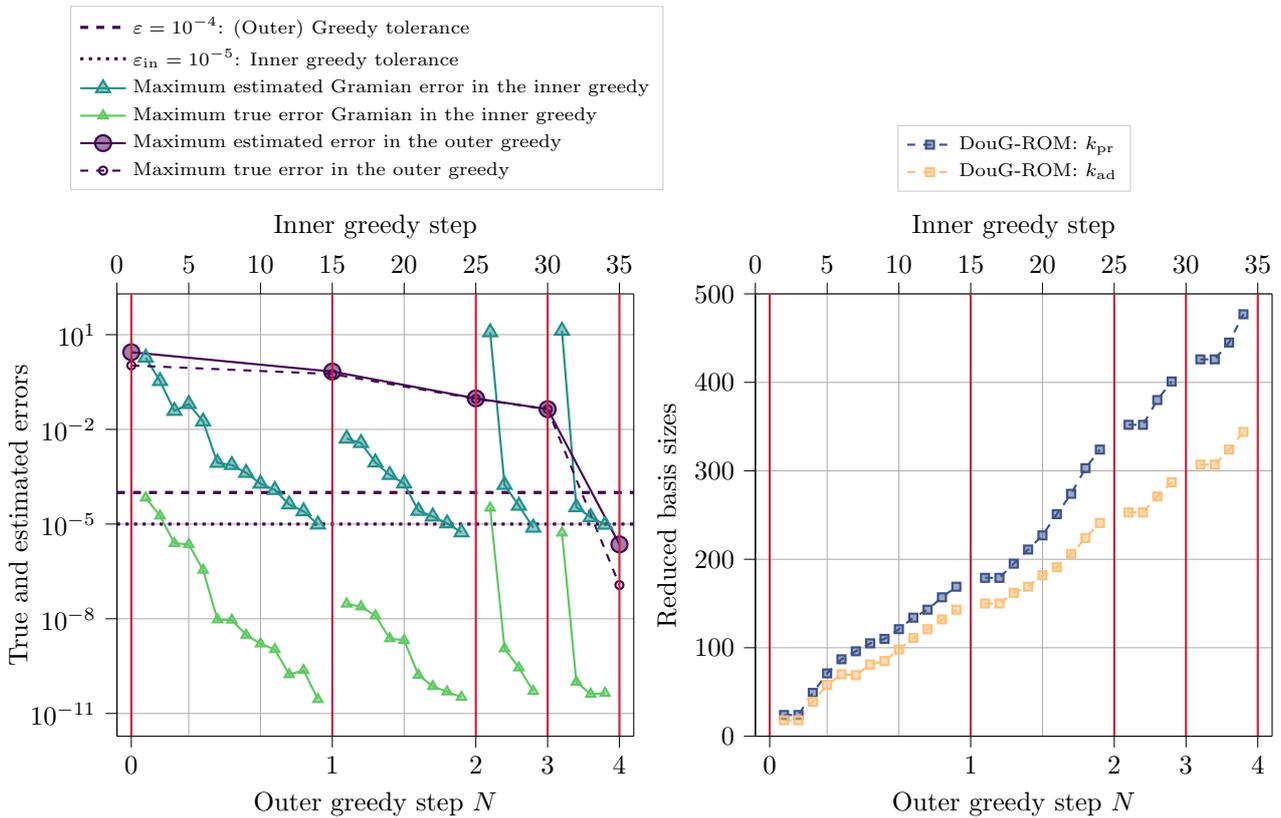
\begin{figure}[htb]
    	\centering
    	\begin{tikzpicture}
    		\begin{axis}[
    			legend cell align={left},
    			legend style={font=\scriptsize, fill opacity=1, draw opacity=1, text opacity=1, draw=white!80!black, yshift=112pt, xshift=15pt},
    			tick align=outside,
    			xmajorgrids,
    			x grid style={white!69.0196078431373!black},
    			xtick style={color=black},
    			ymajorgrids,
    			y grid style={white!69.0196078431373!black},
    			ytick style={color=black},
    			width=.4\textwidth,
    			scale only axis,
    			axis y line*=left,
    			axis x line*=top,
    			xmin=0,
    			xmax=36,
    			xlabel={Inner greedy step},
    			xlabel near ticks,
    			ymode=log,
    			log basis y={10},
    			]
    			\addplot[very thick, mark=none, colorUnused, dashed, samples=2] coordinates {(0,1e-4) (44,1e-4)};
    			\addlegendentry{$\eps=10^{-4}$: (Outer) Greedy tolerance}
    			\addplot[very thick, mark=none, colorUnused, dotted, samples=2] coordinates {(0,1e-5) (44,1e-5)};
    			\addlegendentry{$\epsinner=10^{-5}$: Inner greedy tolerance}
    			\addplot[restrict x to domain=2:14, thick, viridisTeal, mark=triangle*, mark size=3, mark options={solid, fill opacity=0.5}] table[y index=2, forget plot] {data/DG-ROM_inner_greedy_plotting.txt};
    			\addplot[restrict x to domain=16:24, thick, viridisTeal, mark=triangle*, mark size=3, mark options={solid, fill opacity=0.5}] table[y index=2, forget plot] {data/DG-ROM_inner_greedy_plotting.txt};
    			\addplot[restrict x to domain=26:29, thick, viridisTeal, mark=triangle*, mark size=3, mark options={solid, fill opacity=0.5}] table[y index=2, forget plot] {data/DG-ROM_inner_greedy_plotting.txt};
    			\addplot[restrict x to domain=31:34, thick, viridisTeal, mark=triangle*, mark size=3, mark options={solid, fill opacity=0.5}] table[y index=2] {data/DG-ROM_inner_greedy_plotting.txt};
    			\addlegendentry{Maximum estimated Gramian error in the inner greedy}
    			\addplot[restrict x to domain=2:14, thick, viridisGreen, mark=triangle*, mark size=2, mark options={solid, fill opacity=0.5}] table[y index=3, forget plot] {data/DG-ROM_inner_greedy_plotting.txt};
    			\addplot[restrict x to domain=16:24, thick, viridisGreen, mark=triangle*, mark size=2, mark options={solid, fill opacity=0.5}] table[y index=3, forget plot] {data/DG-ROM_inner_greedy_plotting.txt};
    			\addplot[restrict x to domain=26:29, thick, viridisGreen, mark=triangle*, mark size=2, mark options={solid, fill opacity=0.5}] table[y index=3, forget plot] {data/DG-ROM_inner_greedy_plotting.txt};
    			\addplot[restrict x to domain=31:34, thick, viridisGreen, mark=triangle*, mark size=2, mark options={solid, fill opacity=0.5}] table[y index=3] {data/DG-ROM_inner_greedy_plotting.txt};
    			\addlegendentry{Maximum true error Gramian in the inner greedy}
    			\addplot[thick, colorDGROM, mark=*, mark size=3, mark options={solid, fill opacity=0.5}] table[x index=1, y index=2] {data/DG-ROM_outer_greedy_plotting.txt};
    			\addlegendentry{Maximum estimated error in the outer greedy}
    			\addplot[thick, colorDGROM, mark=o, mark size=1.5, mark options={solid, fill opacity=0.5}, dashed] table[x index=1, y index=3] {data/DG-ROM_outer_greedy_plotting.txt};
    			\addlegendentry{Maximum true error in the outer greedy}
    		\end{axis}
    		\begin{axis}[
    			name=double-greedy,
    			tick align=outside,
    			xmajorgrids,
    			x grid style={matchingRed, thick},
    			xtick style={color=black},
    			width=.4\textwidth,
    			scale only axis,
    			axis x line*=bottom,
    			xtick=data,
    			xticklabels={0,...,4},
    			xlabel={Outer greedy step~$\dimRedSpaceFTA$},
    			xlabel near ticks,
    			xmin=0,
    			xmax=36,
    			ymajorticks=false,
    			ylabel={True and estimated errors},
    			ylabel shift=30pt,
    			]
    			\addplot[draw=none] table[x index=1, y index=1] {data/DG-ROM_outer_greedy_plotting.txt};
    		\end{axis}
    		\begin{axis}[
    			anchor=west,
    			at=(double-greedy.east),
    			xshift=1.6cm,
    			legend cell align={left},
    			legend style={font=\scriptsize, fill opacity=1, draw opacity=1, text opacity=1, draw=white!80!black, xshift=-49pt, yshift=67pt},
    			tick align=outside,
    			xmajorgrids,
    			x grid style={white!69.0196078431373!black},
    			xtick style={color=black},
    			ymajorgrids,
    			y grid style={white!69.0196078431373!black},
    			ytick style={color=black},
    			width=.4\textwidth,
    			scale only axis,
    			axis y line*=left,
    			axis x line*=top,
    			xmin=0,
    			xmax=36,
    			xlabel={Inner greedy step},
    			xlabel near ticks,
    			ylabel={Reduced basis sizes},
    			ymin=0,
    			ymax=500,
    			]
    			\addplot[restrict x to domain=2:14, thick, viridisBlue, mark=square*, mark size=1.5, dashed, mark options={solid, fill opacity=0.5}] table[y index=7, forget plot] {data/DG-ROM_inner_greedy_plotting.txt};
    			\addplot[restrict x to domain=16:24, thick, viridisBlue, mark=square*, mark size=1.5, dashed, mark options={solid, fill opacity=0.5}] table[y index=7, forget plot] {data/DG-ROM_inner_greedy_plotting.txt};
    			\addplot[restrict x to domain=26:29, thick, viridisBlue, mark=square*, mark size=1.5, dashed, mark options={solid, fill opacity=0.5}] table[y index=7, forget plot] {data/DG-ROM_inner_greedy_plotting.txt};
    			\addplot[restrict x to domain=31:34, thick, viridisBlue, mark=square*, mark size=1.5, dashed, mark options={solid, fill opacity=0.5}] table[y index=7] {data/DG-ROM_inner_greedy_plotting.txt};
    			\addlegendentry{\DGROM{}: $\kpr$}
    			\addplot[restrict x to domain=2:14, thick, matchingOrange, mark=square*, mark size=1.5, dashed, mark options={solid, fill opacity=0.5}] table[y index=8, forget plot] {data/DG-ROM_inner_greedy_plotting.txt};
    			\addplot[restrict x to domain=16:24, thick, matchingOrange, mark=square*, mark size=1.5, dashed, mark options={solid, fill opacity=0.5}] table[y index=8, forget plot] {data/DG-ROM_inner_greedy_plotting.txt};
    			\addplot[restrict x to domain=26:29, thick, matchingOrange, mark=square*, mark size=1.5, dashed, mark options={solid, fill opacity=0.5}] table[y index=8, forget plot] {data/DG-ROM_inner_greedy_plotting.txt};
    			\addplot[restrict x to domain=31:34, thick, matchingOrange, mark=square*, mark size=1.5, dashed, mark options={solid, fill opacity=0.5}] table[y index=8] {data/DG-ROM_inner_greedy_plotting.txt};
    			\addlegendentry{\DGROM{}: $\kad$}
    		\end{axis}
    		\begin{axis}[
    			anchor=west,
    			at=(double-greedy.east),
    			xshift=1.6cm,
    			legend cell align={left},
    			legend style={font=\scriptsize, fill opacity=1, draw opacity=1, text opacity=1, draw=white!80!black},
    			tick align=outside,
    			xmajorgrids,
    			x grid style={matchingRed, thick},
    			xtick style={color=black},
    			width=.4\textwidth,
    			scale only axis,
    			axis x line*=bottom,
    			xtick=data,
    			xticklabels={0,...,4},
    			xlabel={Outer greedy step~$\dimRedSpaceFTA$},
    			xlabel near ticks,
    			xmin=0,
    			xmax=36,
    			ymajorticks=false,
    			ymin=0,
    			ymax=500,
    			]
    			\addplot[draw=none] table[x index=1, y index=0] {data/DG-ROM_outer_greedy_plotting.txt};
    		\end{axis}
    	\end{tikzpicture}
    	\caption{True and estimated errors (left) and reduced basis sizes (right) during the inner and outer iterations of the double greedy algorithm.}
    	\label{fig:double-greedy-results}
    \end{figure}
    One immediately notices that the error in the Gramian application is significantly overestimated.
    We believe that this is a consequence of the fact that
    the considered estimator measures the error in the full primal solution at final time instead of the
    error in the low-dimensional output~(also see~\Cref{rem:improved-error-estimator-adjoint-problem}).
    We further observe that even in the last iterations of the outer greedy algorithm, large errors
    in the Gramian can occur which then require a further enrichment of the reduced primal and adjoint
    state spaces. This is also reflected by the observation that the reduced basis
    sizes~$\kpr$ and~$\kad$ still increase by about~$150$ and~$100$, respectively,
    in the last two outer iterations. Interestingly, in order to capture the behavior
    of the Gramian over the whole training set (which is ensured by the inner greedy loop),
    a significantly larger primal reduced basis, compared to the size of the adjoint basis, is required.

	\section{Conclusion and outlook}\label{sec:conclusion-outlook}
	In this contribution we combined different reduced order models in a fully reduced model to speed up the solution of parametrized linear-quadratic optimal control problems. Due to the structure of the objective functional and the state equation, the optimal control is already uniquely characterized by the optimal adjoint state at final time. Hence, one part of the fully reduced model consists of a reduced space to approximate the manifold of optimal final time adjoint states over the parameter set. In this paper we extended this approach to linear time-varying systems in a general Hilbert space setting. The runtime complexity of such a reduced model for the final time adjoint states still depends on the dimension of the state space. This constitutes a serious limitation of the method in particular when applied to discretizations of PDEs in multiple dimensions and on grids with fine resolutions which typically result in high-dimensional spaces. We therefore extend the approach by employing an additional reduction step that aims at lowering the computational costs for solving the primal and the adjoint system.
	This is achieved by a Petrov-Galerkin projection of the primal and adjoint state equations onto
	reduced state spaces which are constructed by compressing the full trajectories
	for certain training parameters.
	\par
	To assess and evaluate the accuracy of the fully reduced model, we developed an a posteriori error estimator and proved its reliability. This estimator can be evaluated very efficiently utilizing an
	offline-online decomposition of its high-dimensional components which results in a complexity
	independent of the potentially large state dimensions.
	\par
	We further propose different strategies for constructing the reduced order models.
	One the one hand, it is possible to compute the reduced bases for the dynamical systems
	and for the optimal final time adjoint states independent from each other.
	However, one can also make use of the a posteriori error estimator within a greedy algorithm to extend
	all involved reduced bases simultaneously.
	\par
	The considered numerical experiment of a two-dimensional heat equation problem shows the enormous speedup which is possible by combining the two reduced order models. The fully reduced models achieve errors in the final time adjoint state that are only slightly worse than those of the reduced order model without any system reductions. We further observe that the fully reduced models benefit from the additional reduction of the final time adjoint states without loosing too much in terms of accuracy. Regarding the offline runtimes it turns out that the~\GCROM{}, which is built by performing a greedy algorithm directly on the fully reduced model thereby making use of the available error estimator, can be constructed much faster than the other reduced order models. The~\DGROM{} constructed by a double greedy algorithm shows superior performance in terms of accuracy and efficiency of the error estimator compared to the other fully reduced models. In particular, we believe that the efficiency of the error estimator can also be
	theoretically verified which cannot be expected for the~\GCROM{}.
	However, this increased robustness of the~\DGROM{} also results in a smaller speedup.
	\par
	As we have seen in the numerical example and discussed extensively in~\Cref{sec:computational-results}, the fully reduced model is particularly useful for problems with a low-dimensional output. In such a case, which frequently occurs in practical applications, the set of optimal final time adjoint states inherits this low-dimensional structure and is thus amenable for projection-based model order reduction approaches.
	\par
	The problem setting discussed in this contribution might be extended to other optimal control problems. For instance, further work could concern applications where the objective functional is not quadratic with respect to the control and the state, or problems with bounds on the state or the control. Another interesting research direction would be the penalization of the whole state trajectory instead of measuring solely the deviation of the state at final time from a prescribed target.
	In that case, the full trajectory of the adjoint state is required to uniquely determine the
	optimal control. Hence, a compression of the adjoint state trajectory as already investigated in this paper constitutes a possible way to speed up the approximate solution of the optimal control problem in such a setting, see for instance~\cite{gubisch2017proper} for an investigation of the parameter-independent case. In order to make an approach in that direction certified, a suitable error estimator would be required. Furthermore, the reduced basis construction for the system reduction by compressing trajectories using~POD as presented in this paper could be replaced by other approaches from parametrized model order reduction of dynamical systems. In particular when the deviation in an output quantity instead of a high-dimensional state is considered, ideas such as interpolatory methods, building for instance on balanced truncation for non-parametrized systems, might be better suited, see~\cite{baur2017comparison} for an overview of existing approaches. Regarding the reduction of the system trajectories, the POD-greedy approach introduced in~\cite{haasdonk2008reduced} and further analyzed in~\cite{haasdonk2013convergence} might also be of interest. The ideas for combining the two reduced order models and the error estimation as depicted in~\Cref{sec:fully-reduced-model} are independent of the construction of the reduced bases involved. It is therefore readily possible to include other basis generation algorithms in the computation of the fully reduced model. Moreover, improving the error estimator for the fully reduced model by taking into account the output operator directly, might be of interest for future research, see also~\Cref{rem:improved-error-estimator-adjoint-problem}.
	\par
	In~\cite{kleikamp2024application}, an adaptive model hierarchy for parametrized problems consisting of a full order model, a reduced order model and a machine learning surrogate was applied to parametrized optimal control problems with linear time-invariant system dynamics. The model hierarchy was first developed and discussed in detail in~\cite{haasdonk2023certified} for parametrized parabolic~PDEs. The fully reduced models described in this paper could be integrated as an additional level in the model hierarchy and might be combined with the machine learning surrogate introduced in~\cite{kleikamp2024greedy} to obtain an additional speedup. Due to the available reliable error estimator of the fully reduced model, the extended hierarchy could still return results that meet a prescribed tolerance.

    \printbibliography

    \section*{Statements and declarations}

	\paragraph{Acknowledgments}
	The authors would like to thank Michael Kartmann for helpful advice on how to properly incorporate the mass operator in the optimality system as well as Julia Schleuß for several constructive conversations in particular regarding practical aspects concerning the cookie baking test case. Further, the authors would like to thank Mario Ohlberger, Stephan Rave and Jens Saak for fruitful discussions and advice.

    \paragraph{Funding}
    The authors acknowledge funding by the Deutsche Forschungsgemeinschaft (DFG, German Research Foundation) under Germany's Excellence Strategy EXC 2044 –390685587, Mathematics Münster: Dynamics–Geometry–Structure.

    \paragraph{Competing interests}
    The authors have no relevant financial or non-financial interests to disclose.

    \paragraph{Author contribution statements}
    \begin{itemize}
		\item H.~Kleikamp: Conceptualization, Methodology, Software, Validation, Formal analysis, Investigation, Writing -- Original Draft, Writing -- Review \& Editing, Visualization
		\item L.~Renelt: Methodology, Formal analysis, Writing -- Review \& Editing
    \end{itemize}

    \paragraph{Code availability}
    The source code used to carry out the numerical experiments presented in this contribution can be found in~\cite{sourcecode}.

	\begin{appendices}
		\crefalias{section}{appendix}
		\crefalias{subsection}{appendix}	    
	    \section{Proof of~\texorpdfstring{\Cref{thm:optimality-system}}{Theorem~\ref{thm:optimality-system}}}\label{app:proof-optimality-system}
	    For simplicity of notation, we neglect the parameter~$\mu\in\params$ in the following derivation.
	    \begin{proof}
	    	Let us denote by~$u^*\in\G$ an optimal control and let~$x^*\in\H$ be the respective state trajectory. We are going to prove that the first variation of~$\optFunc{}$ vanishes if~$x^*$, $\varphi^*$ and~$u^*$ solve the boundary value problem stated in~\Cref{thm:optimality-system}. To this end, let~$v\in\G$ be arbitrary and consider the perturbation~$u\in\G$ of~$u^*$ given by
	    	\begin{align*}
	    		u \coloneqq u^* + \varepsilon v
	    	\end{align*}
	    	for~$\varepsilon\in\R$. Therefore, the state equation when applying the control~$u$ reads
	    	\begin{align*}
	    		\E\frac{d}{dt}x(t) = \A{t}x(t)+\B{t}u^*(t)+\varepsilon \B{t}v(t)\qquad\text{for }t\in[0,T].
	    	\end{align*}
	    	We write the solution of this state equation as
	    	\begin{align*}
	    		x(t) = x^*(t) + \varepsilon z(t)
	    	\end{align*}
	    	for~$z\in\H$ where~$z$ satisfies the differential equation
	    	\begin{align*}
	    		\E\frac{d}{dt}z(t) = \A{t}z(t)+\B{t}v(t)\quad\text{for }t\in[0,T],\qquad z(0)=0.
	    	\end{align*}
	    	The initial condition~$z(0)=0$ follows from the fact that~$x(0)=x^*(0)=x^0$. We now introduce the adjoint state~$\varphi\in\H$ and define the Hamiltonian function~$\Ham\colon\X\times\U\times\X\times[0,T]\to\R$ as
	    	\begin{align*}
	    		\Ham(x(t),u(t),\varphi(t),t) = \frac{1}{2} \innerProd{u(t)}{R(t)u(t)}_{\U\times\dualU} + \innerProd{\varphi(t)}{\A{t}x(t)+\B{t}u(t)}_{\X\times\dualX}.
	    	\end{align*}
	    	Using further that
	    	\begin{align*}
	    		\normY{\C\left(x(T)-x^T\right)}^2=\innerProd{\C^*\Riesz{\Y}\C\left(x(T)-x^T\right)}{x(T)-x^T}_{\dualX\times\X}=\innerProd{x(T)-x^T}{\Mop\left(x(T)-x^T\right)}_{\X\times\dualX},
	    	\end{align*}
	    	we can thus rewrite the functional~$\optFunc{}$ as
	    	\begin{align*}
	    		\optFunc{}(u) = \frac{1}{2}\innerProd{x(T)-x^T}{\Mop\left(x(T)-x^T\right)}_{\X\times\dualX} + \int\limits_0^T \Ham(x(t),u(t),\varphi(t),t) - \innerProd{\varphi(t)}{\E\frac{d}{dt}x(t)}_{\X\times\dualX} \d{t},
	    	\end{align*}
	    	since~$x\in H$ solves the state equation for the control~$u$. This equation holds for any adjoint state~$\varphi\in H$. Similarly, for the optimal control~$u^*$ and corresponding state trajectory~$x^*$ we have
	    	\begin{align*}
	    		\optFunc{}(u^*) = \frac{1}{2}\innerProd{x^*(T)-x^T}{\Mop\left(x^*(T)-x^T\right)}_{\X\times\dualX} + \int\limits_0^T \Ham(x^*(t),u^*(t),\varphi(t),t) - \innerProd{\varphi(t)}{\E\frac{d}{dt}x^*(t)}_{\X\times\dualX}\d{t}.
	    	\end{align*}
	    	Then, the difference~$\optFunc{}(u)-\optFunc{}(u^*)$ is given as
	    	\begin{equation}\label{equ:difference-cost-functional}
	    		\begin{aligned}
	    			\optFunc{}(u)-\optFunc{}(u^*) &= \frac{1}{2}\Big[\innerProd{x(T)-x^T}{\Mop\left(x(T)-x^T\right)}_{\X\times\dualX} - \innerProd{x^*(T)-x^T}{\Mop\left(x^*(T)-x^T\right)}_{\X\times\dualX}\Big] \\
	    			& \hphantom{==}+ \int\limits_0^T \Ham(x(t),u(t),\varphi(t),t)-\Ham(x^*(t),u^*(t),\varphi(t),t)\d{t} \\
	    			& \hphantom{==}+ \int\limits_0^T \innerProd{\varphi(t)}{\E\frac{d}{dt}(x^*(t)-x(t))}_{\X\times\dualX}\d{t}.
	    		\end{aligned}
	    	\end{equation}
	    	We obtain for the first term in~\cref{equ:difference-cost-functional} the identity
	    	\begin{align*}
	    		& \frac{1}{2}\Big[\innerProd{x(T)-x^T}{\Mop\big(x(T)-x^T\big)}_{\X\times\dualX}-\innerProd{x^*(T)-x^T}{\Mop\big(x^*(T)-x^T\big)}_{\X\times\dualX}\Big] \\
	    		&= \frac{1}{2}\Big[\innerProd{x^*(T)+\varepsilon z(T)-x^T}{\Mop\big(x^*(T)+\varepsilon z(T)-x^T\big)}_{\X\times\dualX}-\innerProd{x^*(T)-x^T}{\Mop\big(x^*(T)-x^T\big)}_{\X\times\dualX}\Big] \\
	    		&= \varepsilon \innerProd{z(T)}{\Mop\big(x^*(T)-x^T\big)}_{\X\times\dualX}+\mathcal{O}(\varepsilon^2),
	    	\end{align*}
	    	where we used that~$\Riesz{\X}^{-1}\Mop$ is self-adjoint. Moreover, for the difference of the Hamiltonians in the second term in~\cref{equ:difference-cost-functional} we have
	    	%	\begingroup
	    	%	\allowdisplaybreaks
	    	\begin{align*}
	    		& \Ham(x(t),u(t),\varphi(t),t)-\Ham(x^*(t),u^*(t),\varphi(t),t) \\
	    		&= \frac{1}{2} \innerProd{u(t)}{R(t)u(t)}_{\U\times\dualU} + \innerProd{\varphi(t)}{\A{t}x(t)+\B{t}u(t)}_{\X\times\dualX} \\
	    		&\hphantom{==}- \frac{1}{2} \innerProd{u^*(t)}{R(t)u^*(t)}_{\U\times\dualU} - \innerProd{\varphi(t)}{\A{t}x^*(t)+\B{t}u^*(t)}_{\X\times\dualX} \\
	    		&= \frac{1}{2}\innerProd{u^*(t) + \varepsilon v(t)}{R(t)\big(u^*(t) + \varepsilon v(t)\big)}_{\U\times\dualU} + \innerProd{\varphi(t)}{\A{t}x^*(t)+\varepsilon \A{t}z(t)+\B{t}u^*(t)+\varepsilon \B{t}v(t)}_{\X\times\dualX} \\
	    		& \hphantom{==}- \frac{1}{2}\innerProd{u^*(t)}{R(t)u^*(t)}_{\U\times\dualU} - \innerProd{\varphi(t)}{\A{t}x^*(t)+\B{t}u^*(t)}_{\X\times\dualX} \\
	    		&= \varepsilon \innerProd{u^*(t)}{R(t)v(t)}_{\U\times\dualU} + \innerProd{\varphi(t)}{\varepsilon \A{t}z(t)+\varepsilon \B{t}v(t)}_{\X\times\dualX} + \mathcal{O}(\varepsilon^2) \\
	    		&= \varepsilon\Big[\innerProd{u^*(t)}{R(t)v(t)}_{\U\times\dualU} + \innerProd{\varphi(t)}{\A{t}z(t)}_{\X\times\dualX} + \innerProd{\varphi(t)}{\B{t}v(t)}_{\X\times\dualX}\Big] + \mathcal{O}(\varepsilon^2) \\
	    		&= \varepsilon\Big[\innerProd{v(t)}{R(t)u^*(t)+\B{t}^*\varphi(t)}_{\U\times\dualU} + \innerProd{\varphi(t)}{\A{t}z(t)}_{\X\times\dualX}\Big] + \mathcal{O}(\varepsilon^2)
	    	\end{align*}
	    	%	\endgroup
	    	for~$t\in[0,T]$, where we used that~$\Riesz{\U}^{-1}\Rop$ is self-adjoint as well. Furthermore, recall that~$x(t)=x^*(t)+\varepsilon z(t)$ and therefore~$\E\left(\frac{d}{dt}x^*(t)-\frac{d}{dt}x(t)\right)=-\varepsilon\E\frac{d}{dt}z(t)$ for all~$t\in[0,T]$. For the last term in~\cref{equ:difference-cost-functional} it hence holds
	    	\begin{align*}
	    		\int\limits_0^T \innerProd{\varphi(t)}{\E\frac{d}{dt}(x^*(t)-x(t))}_{\X\times\dualX}\d{t} &= -\varepsilon\int\limits_0^T \innerProd{\E^*\varphi(t)}{\frac{d}{dt}z(t)}_{\X\times\dualX}\d{t} \\
	    		&= \big[-\varepsilon\innerProd{\E^*\varphi(t)}{z(t)}_{\X}\big]_0^T + \varepsilon\int\limits_0^T \innerProd{\frac{d}{dt}\left(\E^*\varphi(t)\right)}{z(t)}_{\dualX\times\X}\d{t} \\
	    		&= -\varepsilon\innerProd{z(T)}{\E^*\varphi(T)}_{\X} + \varepsilon\int\limits_0^T \innerProd{z(t)}{\Ead\frac{d}{dt}\varphi(t)}_{\X\times\dualX}\d{t},
	    	\end{align*}
	    	where we used integration by parts and the initial condition~$z(0)=0$. In the last line, the identity
	    	\begin{equation*}
	    		\frac{d}{dt}(\E^*\varphi(t)) \;=\; \Riesz{\X}\E^*\Riesz{\X}^{-1}\frac{d}{dt}\varphi(t)
	    		\;=\; \Ead\frac{d}{dt}\varphi
	    	\end{equation*}
	    	was used which can be seen as follows: Let~$\psi\in C^1([0,T];\X)$, $x\in\X$ and~$t\in[0,T]$. Then it holds
	    	\begin{align*}
	    		\innerProd{\frac{d}{dt}(\E^*\psi(t))}{x}_{\dualX\times\X} &= \lim\limits_{h\to 0}\innerProd{\frac{\E^*\psi(t+h)-\E^*\psi(t)}{h}}{x}_{\X\times\X} \\
	    		&= \lim\limits_{h\to 0}\innerProd{\frac{\psi(t+h)-\psi(t)}{h}}{\E\Riesz{\X}x}_{\X\times\dualX} \\
	    		&= \innerProd{\frac{d}{dt}\psi(t)}{\E\Riesz{\X}x}_{\dualX\times\dualX} \\
	    		&= \innerProd{\Riesz{\X}^{-1}\frac{d}{dt}\psi(t)}{\E\Riesz{\X}x}_{\X\times\dualX} \\
	    		&= \innerProd{\Riesz{\X}\E^*\Riesz{\X}^{-1}\frac{d}{dt}\psi(t)}{x}_{\dualX\times\X}.
	    	\end{align*}
	    	The result for general~$\psi\in\H$ follows by density arguments. Combining the previous statements yields
	    	\begin{align*}
	    		\optFunc{}(u)-\optFunc{}(u^*) &= \varepsilon\left[\innerProd{z(T)}{\Mop\big(x^*(T)-x^T\big)}_{\X\times\dualX}\vphantom{\int\limits_0^T}\right. \\
	    		& \hphantom{=\varepsilon=}\left.+ \int\limits_0^T \innerProd{v(t)}{R(t)u^*(t)+\B{t}^*\varphi(t)}_{\U\times\dualU} + \innerProd{\varphi(t)}{\A{t}z(t)}_{\X\times\dualX}\d{t} \right. \\
	    		& \hphantom{=\varepsilon=}\left. + \int\limits_0^T \innerProd{z(t)}{\Ead\frac{d}{dt}\varphi(t)}_{\X\times\dualX}\d{t} - \innerProd{z(T)}{\E^*\varphi(T)}_{\X}\right] + \mathcal{O}(\varepsilon^2) \\
	    		&= \varepsilon\left[\innerProd{z(T)}{\Mop\left(x^*(T)-x^T\right) - \Riesz{\X}\E^*\varphi(T)}_{\X\times\dualX} + \int\limits_0^T \innerProd{v(t)}{R(t)u^*(t)+\B{t}^*\varphi(t)}_{\U\times\dualU}\d{t} \right. \\
	    		& \hphantom{=\varepsilon=}\left. + \int\limits_0^T \innerProd{z(t)}{\Ead\frac{d}{dt}\varphi(t)+\A{t}^*\varphi(t)}_{\X\times\dualX}\d{t}\right] + \mathcal{O}(\varepsilon^2).
	    	\end{align*}
	    	Since~$u^*$ is assumed to be an optimal control, it has to hold for all~$v\in \U$ that
	    	\begin{align*}
	    		0 &= \lim\limits_{\varepsilon\to 0}\frac{\optFunc{}(u^*+\varepsilon v)-\optFunc{}(u^*)}{\varepsilon} \\
	    		&= \lim\limits_{\varepsilon\to 0}\frac{\optFunc{}(u)-\optFunc{}(u^*)}{\varepsilon} \\
	    		&= \innerProd{z(T)}{\Mop\big(x^*(T)-x^T\big)-\Riesz{\X}\E^*\varphi(T)}_{\X\times\dualX} \\
	    		& \hphantom{==}+ \int\limits_0^T \innerProd{v(t)}{R(t)u^*(t)+\B{t}^*\varphi(t)}_{\U\times\dualU}\d{t} + \int\limits_0^T \innerProd{z(t)}{\Ead\frac{d}{dt}\varphi(t)+\A{t}^*\varphi(t)}_{\X\times\dualX}\d{t}.
	    	\end{align*}
	    	This results in the necessary conditions for~$u^*$, $x^*$ and~$\varphi^*$ as claimed in~\Cref{thm:optimality-system}.
	    \end{proof}
	    
	    \section{Details on offline-online decompositions of the error estimators and the fully reduced model}
	    In this section we provide some practical details for the abstract formulation given above. To this end, all operators are treated as matrices, i.e.~$\E\in\R^{n\times n}$, $\A{\mu;t}\in\R^{n\times n}$, $\B{\mu;t}\in\R^{n\times m}$, $\Mop\in\R^{n\times n}$ and~$\Rop(t)\in\R^{m\times m}$. The reduced space~$\spaceVpr$, $\spaceWpr$, $\spaceVad$, $\spaceWad$ and~$\spaceVred{\dimRedSpaceFTA}$ are further represented as matrices~$\Vpr\in\R^{n\times\kpr}$, $\Wpr\in\R^{n\times\kpr}$, $\Vad\in\R^{n\times\kad}$, $\Wad\in\R^{n\times\kad}$ and~$\Vred\in\R^{n\times\dimRedSpaceFTA}$ as shown in~\Cref{rem:matrix-case}. We assume for simplicity that the matrices are assembled in such a form that the respective Riesz maps are already incorporated in the matrices and do not have to be treated separately. This is often the case when assembling the system matrices of a finite element method where the reduced coefficients correspond to the coefficient representation of the associated element from the finite element space. We further assume that the inner product on the space~$\X$ is induced by a matrix~$\Gmat$ as discussed in~\Cref{sec:numerical-experiments}.
	    
	    \subsection{Error estimators for the primal and adjoint systems}\label{app:details-offline-online-decompositions-primal-and-adjoint}
	    We can rewrite the squared norm of the primal residual for~$\hat{x}\in\R^{\kpr}$, $u\in\R^m$, $\mu\in\params$ and~$t\in[0,T]$ as
	    \begin{align*}
	    	\normX{\E^{-1}\ResPrim{\mu}{\hat{x}}{u}(t)}^2 &= \ResPrim{\mu}{\hat{x}}{u}(t)^\top\E^{-\top} \Gmat\E^{-1}\ResPrim{\mu}{\hat{x}}{u}(t) \\
	    	&= \hat{x}^\top \Vpr^\top \A{\mu;t}^\top \E^{-\top} \Gmat\E^{-1}\A{\mu;t} \Vpr\hat{x} + u^\top \B{\mu;t}^\top \E^{-\top} \Gmat\E^{-1}\B{\mu;t} u\\
	    	&\hphantom{=}+\left(\frac{d}{dt}\hat{x}\right)^\top \Vpr^\top \Gmat\Vpr\left(\frac{d}{dt}\hat{x}\right)+2u^\top \B{\mu;t}^\top \E^{-\top} \Gmat\E^{-1}\A{\mu;t} \Vpr\hat{x}\\
	    	&\hphantom{=}-2\left(\frac{d}{dt}\hat{x}\right)^\top \Vpr^\top \Gmat\E^{-1}\A{\mu;t} \Vpr\hat{x}-2\left(\frac{d}{dt}\hat{x}\right)^\top \Vpr^\top \Gmat\E^{-1}\B{\mu;t} u \\
	    	&= \hat{x}^\top M_1(\mu;t)\hat{x} + u^\top M_2(\mu;t)u+\left(\frac{d}{dt}\hat{x}\right)^\top M_3\left(\frac{d}{dt}\hat{x}\right) \\
	    	&\hphantom{=}+2u^\top M_4(\mu;t)\hat{x}-2\left(\frac{d}{dt}\hat{x}\right)^\top M_5(\mu;t)\hat{x}-2\left(\frac{d}{dt}\hat{x}\right)^\top M_6(\mu;t)u,
	    \end{align*}
	    with
	    \begin{align*}
	    	M_1(\mu;t) &\coloneqq \Vpr^\top \A{\mu;t}^\top \E^{-\top} \Gmat\E^{-1}\A{\mu;t} \Vpr\in\R^{\kpr\times \kpr}, \\
	    	M_2(\mu;t) &\coloneqq \B{\mu;t}^\top \E^{-\top} \Gmat\E^{-1}\B{\mu;t}\in\R^{m\times m}, \\
	    	M_3 &= \Vpr^\top \Gmat\Vpr\in\R^{\kpr\times \kpr}, \\
	    	M_4(\mu;t) &= \B{\mu;t}^\top \E^{-\top} \Gmat\E^{-1}\A{\mu;t} \Vpr\in\R^{m\times \kpr}, \\
	    	M_5(\mu;t) &= \Vpr^\top \Gmat\E^{-1}\A{\mu;t} \Vpr\in\R^{\kpr\times \kpr}, \\
	    	M_6(\mu;t) &= \Vpr^\top \Gmat\E^{-1}\B{\mu;t}\in\R^{\kpr\times m}.
	    \end{align*}
	    The size of the matrices defined above only depends on the dimension~$\kpr$ of the reduced space and on the dimension~$m$ of the control space. Hence, under the assumption of parameter-separability of the system matrices, see~\cref{equ:parameter-separability}, one can efficiently compute the reduced matrices during the online phase. To precompute the parameter-independent components of~$M_1$, $M_2$, $M_4$, $M_5$, and~$M_6$ during the offline phase, we define
	    \begin{align*}
	    	M_1^{q,q'} \coloneqq \Vpr^\top \left(A^q\right)^\top \E^{-\top} \Gmat\E^{-1}A^{q'}\Vpr\in\R^{\kpr\times\kpr}
	    \end{align*}
	    for~$q,q'=1,\dots,Q_A$ and similarly for~$M_2^{q,q'}$, $M_4^{q,q'}$, $M_5^{q,q'}$, and~$M_6^{q,q'}$. Then it holds
	    \begin{align*}
	    	M_1(\mu;t) = \sum\limits_{q,q'=1}^{Q_A}\theta_A^q(\mu;t)\theta_A^{q'}(\mu;t)M_1^{q,q'}.
	    \end{align*}
	    Furthermore, it holds
	    \begin{align*}
	    	x_\mu^0-\Projection{\spaceVpr}x_\mu^0 = x_\mu^0-\Vpr\Vpr^\top\Gmat x_\mu^0 = (\eye{n}-\Vpr\Vpr^\top\Gmat)x_\mu^0
	    \end{align*}
	    and thus
	    \begin{align*}
	    	\normX{x_\mu^0-\Projection{\spaceVpr}x_\mu^0}^2 = \left(x_\mu^0\right)^\top(\eye{n}-\Vpr\Vpr^\top\Gmat)^\top\Gmat(\eye{n}-\Vpr\Vpr^\top\Gmat)x_\mu^0.
	    \end{align*}
	    This can be decomposed by defining
	    \begin{align*}
	    	m_0^{q,q'}\coloneqq \left(x_0^q\right)^\top(\eye{n}-\Vpr\Vpr^\top\Gmat)^\top\Gmat(\eye{n}-\Vpr\Vpr^\top\Gmat)\left(x_0^{q'}\right)
	    \end{align*}
	    for~$q,q'=1,\dots,Q_{x^0}$ and then assembling the squared norm as
	    \begin{align*}
	    	\normX{x_\mu^0-\Projection{\spaceVpr}x_\mu^0}^2 = \sum\limits_{q,q'=1}^{Q_{x^0}}\theta_{x^0}^q(\mu)\theta_{x^0}^{q'}(\mu)m_0^{q,q'}.
	    \end{align*}
	    Altogether, we obtain an efficient decomposition of the error estimator~$\EstPrim{\mu}{\,\cdot\,}$ which can be evaluated in a complexity independent of the high-dimensional state space.
	    \par
	    A similar decomposition can be derived for the error estimator of the adjoint system as well. Again, we obtain for the squared norm of the adjoint residual a representation of the form
	    \begin{align*}
	    	\normX{\E^{-\top}\ResAdjo{\mu}{\hat{\varphi}}(t)}^2 &= \ResAdjo{\mu}{\hat{\varphi}}(t)^\top \E^{-1}\Gmat\E^{-\top}\ResAdjo{\mu}{\hat{\varphi}}(t) \\
	    	&= \hat{\varphi}^\top \Vad^\top \A{\mu;t} \E^{-1}\Gmat\E^{-\top}\A{\mu;t}^\top \Vad\hat{\varphi} + \left(\frac{d}{dt}\hat{\varphi}\right)^\top \Vad^\top\Gmat\Vad\left(\frac{d}{dt}\hat{\varphi}\right) \\
	    	&\hphantom{=}+2\left(\frac{d}{dt}\hat{\varphi}\right)^\top \Vad^\top\Gmat\E^{-\top}\A{\mu;t}^\top \Vad\hat{\varphi} \\
	    	&= \hat{\varphi}^\top M_7(\mu;t)\hat{\varphi} + \left(\frac{d}{dt}\hat{\varphi}\right)^\top M_8\left(\frac{d}{dt}\hat{\varphi}\right) + 2\left(\frac{d}{dt}\hat{\varphi}\right)^\top M_9(\mu;t)\hat{\varphi},
	    \end{align*}
	    where
	    \begin{align*}
	    	M_7(\mu;t) &\coloneqq \Vad^\top \A{\mu;t} \E^{-1}\Gmat\E^{-\top}\A{\mu;t}^\top \Vad\in\R^{\kad\times\kad},\\
	    	M_8 &\coloneqq \Vad^\top\Gmat\Vad\in\R^{\kad\times\kad},\\
	    	M_9(\mu;t) &\coloneqq \Vad^\top\Gmat\E^{-\top}\A{\mu;t}^\top \Vad\in\R^{\kad\times\kad}.
	    \end{align*}
	    The full offline-online decomposition can now be performed similarly as for the error estimator of the primal system.
	    
	    \subsection{Solution of the fully reduced model}\label{app:solution-fully-reduced-model}
	    To compute~$\compredFTA{\mu}{\dimRedSpaceFTA}$ in practice, we consider the reduced basis~$\redBasisVred{\dimRedSpaceFTA}=\{\varphi_1,\dots,\varphi_\dimRedSpaceFTA\}\subset\R^n$ such that~$\Span{\redBasisVred{\dimRedSpaceFTA}}=\spaceVred{\dimRedSpaceFTA}$ and the columns of~$\Vred$ are given by the vectors~$\varphi_1,\dots,\varphi_\dimRedSpaceFTA$. The least squares problem in~\eqref{equ:def-fully-reduced-final-time-adjoint} is solved by projecting the right-hand side of the linear system onto the space spanned by the vectors reachable from~$\spaceVred{\dimRedSpaceFTA}$ using only reduced systems. The coefficients~$\alpha^\mu=[\alpha_i^\mu]_{i=1}^{\dimRedSpaceFTA}\in\R^\dimRedSpaceFTA$ of this~$\X$-orthogonal projection can be computed as solution of the (small) linear system
	    \begin{align}\label{equ:linear-system-coefficients-fully-reduced-solution}
	    	\ProjectionMatrix{\mu}^\top\Gmat\ProjectionMatrix{\mu} \alpha^\mu = \ProjectionMatrix{\mu}^\top\Gmat\Mop\left(\Vpr\RedPrStateTrans{\mu}{T}{0}\Vpr^\top\Gmat x_\mu^0-x_\mu^T\right),
	    \end{align}
	    where~$\ProjectionMatrix{\mu}=\left(\E^\top+\Mop\Vpr\RedGramian{\mu}\Vad^\top\Gmat\right)\Vred\in\R^{n\times\dimRedSpaceFTA}$ is not constructed, but instead we compute~$\ProjectionMatrix{\mu}^\top\Gmat\ProjectionMatrix{\mu}$ as
	    \begin{align*}
	    	\ProjectionMatrix{\mu}^\top\Gmat\ProjectionMatrix{\mu} &= \Vred^\top\E\Gmat\E^\top\Vred + \Vred^\top\E\Gmat\Mop\Vpr\RedGramian{\mu}\Vad^\top\Gmat\Vred \\
	    	&\hphantom{==}+ \left(\RedGramian{\mu}\Vad^\top\Gmat\Vred\right)^\top\Vpr^\top\Mop^\top\Gmat\E^\top\Vred + \left(\RedGramian{\mu}\Vad^\top\Gmat\Vred\right)^\top\Vpr^\top\Mop^\top\Gmat\Mop\Vpr\left(\RedGramian{\mu}\Vad^\top\Gmat\Vred\right)^\top.
	    \end{align*}
	    We can therefore precompute
	    \begin{align*}
	    	\Vred^\top\E\Gmat\E^\top\Vred\in\R^{\dimRedSpaceFTA\times\dimRedSpaceFTA},\qquad\Vred^\top\E\Gmat\Mop\Vpr\in\R^{\dimRedSpaceFTA\times\kpr}\qquad\text{and}\qquad\Vpr^\top\Mop^\top\Gmat\Mop\Vpr\in\R^{\kpr\times\kpr}
	    \end{align*}
	    during the offline phase and assemble the remaining components online for a new parameter with complexity independent of the state space~$\X$. To this end, we compute~$\RedGramian{\mu}\Vad^\top\Gmat\Vred\in\R^{\kpr\times\dimRedSpaceFTA}$ by solving only reduced order systems (the matrix~$\Vad^\top\Gmat\Vred\in\R^{\kad\times\dimRedSpaceFTA}$ is also precomputed offline) by making use of the identity in~\eqref{equ:reduced-gramian-product}. It is therefore required to solve the reduced adjoint and primal systems once for every basis function in~$\redBasisVred{\dimRedSpaceFTA}$, i.e.~a total of~$2\dimRedSpaceFTA$ reduced systems need to be solved. We furthermore compute the right-hand side of the system in a similar fully reduced manner. This way, the linear system in~\cref{equ:linear-system-coefficients-fully-reduced-solution} can be set up by only performing reduced computations. Having solved the linear system, the approximate final time adjoint state is then given as
	    \begin{align*}
	    	\compredFTA{\mu}{\dimRedSpaceFTA} = \sum\limits_{i=1}^{\dimRedSpaceFTA}\alpha_i^\mu\varphi_i.
	    \end{align*}
	    
	    \subsection{Reduced error estimator for the final time adjoint}\label{app:details-offline-online-decompositions-reduced-error-estimator}
	    Similar to~\Cref{app:details-offline-online-decompositions-primal-and-adjoint} we consider the squared norm and can decompose~$\RedEstFTA{\mu}{\dimRedSpaceFTA}(\varphi^\dimRedSpaceFTA)$ as
	    \begin{align*}
	    	\left[\RedEstFTA{\mu}{\dimRedSpaceFTA}(\varphi^\dimRedSpaceFTA)\right]^2 &= \normX{\Mop(\Vpr \RedPrStateTrans{\mu}{T}{0}\Vpr^\top\Gmat x_\mu^0-x_\mu^T)-(\E^\top+\Mop\Vpr\RedGramian{\mu}\Vad^\top \Gmat)\Vred\varphi^\dimRedSpaceFTA}^2 \\
	    	&= \left(\RedPrStateTrans{\mu}{T}{0}\Vpr^\top\Gmat x_\mu^0\right)^\top M_{10}\left(\RedPrStateTrans{\mu}{T}{0}\Vpr^\top\Gmat x_\mu^0\right) + (x_\mu^T)^\top \Mop^\top \Gmat\Mop(x_\mu^T)+\left(\varphi^\dimRedSpaceFTA\right)^\top M_{12}\varphi^\dimRedSpaceFTA\\
	    	&\phantom{==}+\left(\RedGramian{\mu} M_{11}\varphi^\dimRedSpaceFTA\right)^\top M_{10}\left(\RedGramian{\mu} M_{11}\varphi^\dimRedSpaceFTA\right)-2\left(\Vpr^\top \Mop^\top \Gmat\Mop x_\mu^T\right)^\top\left(\RedPrStateTrans{\mu}{T}{0}\Vpr^\top\Gmat x_\mu^0\right)\\
	    	&\phantom{==}-2\left(\varphi^\dimRedSpaceFTA\right)^\top M_{13}\left(\RedPrStateTrans{\mu}{T}{0}\Vpr^\top\Gmat x_\mu^0\right)-2\left(\RedPrStateTrans{\mu}{T}{0}\Vpr^\top\Gmat x_\mu^0\right)^\top M_{10}\left(\RedGramian{\mu} M_{11}\varphi^\dimRedSpaceFTA\right)\\
	    	&\hphantom{==}+2\left(\Vred^\top\E \Gmat\Mop x_\mu^T\right)^\top \varphi^\dimRedSpaceFTA+2\left(\Vpr^\top \Mop^\top \Gmat\Mop x_\mu^T\right)^\top \left(\RedGramian{\mu} M_{11}\varphi^\dimRedSpaceFTA\right)\\
	    	&\hphantom{==}+2\left(\varphi^\dimRedSpaceFTA\right)^\top M_{13}\left(\RedGramian{\mu} M_{11}\varphi^\dimRedSpaceFTA\right),
	    \end{align*}
	    where
	    \begin{align*}
	    	M_{10} &= \Vpr^\top \Mop^\top\Gmat\Mop\Vpr\in\R^{\kpr\times\kpr},\\
	    	M_{11} &= \Vad^\top\Gmat\Vred\in\R^{\kad\times \dimRedSpaceFTA},\\
	    	M_{12} &= \Vred^\top\E\Gmat\E^\top\Vred\in\R^{\dimRedSpaceFTA\times \dimRedSpaceFTA},\\
	    	M_{13} &= \Vred^\top\E\Gmat\Mop\Vpr\in\R^{\dimRedSpaceFTA\times\kpr}.
	    \end{align*}
	    Computing the quantities~$\RedPrStateTrans{\mu}{T}{0}\Vpr^\top\Gmat x_\mu^0$ and~$\RedGramian{\mu} M_{11}\varphi^\dimRedSpaceFTA$ can be done very efficiently using only the reduced systems as presented in~\Cref{sec:reduced-basis-mor-for-system-dynamics}. Furthermore, we have to compute~$(x_\mu^T)^\top \Mop^\top \Gmat\Mop (x_\mu^T)$, $\Vpr^\top \Mop^\top \Gmat\Mop x_\mu^T$, and~$\Vred^\top\E \Gmat\Mop x_\mu^T$: We thus define~$m_T^{q,q'}\coloneqq\left(x_T^q\right)^\top \Mop^\top\Gmat\Mop\left(x_T^{q'}\right)\in\R$ for~$q,q'=1,\dots,Q_{x^T}$ and can then compute
	    \begin{align*}
	    	(x_\mu^T)^\top \Mop^\top \Gmat\Mop (x_\mu^T) = \sum\limits_{q,q'=1}^{Q_{x^T}}\theta_{x^T}^q(\mu)\theta_{x^T}^{q'}(\mu)m_T^{q,q'}.
	    \end{align*}
	    We further set~$V_{\Mop,x^T}^{q}\coloneqq \Vpr^\top \Mop^\top \Gmat \Mop x_T^q\in\R^{\kpr}$ for~$q=1,\dots,Q_{x^T}$ and compute online the reduced quantity
	    \begin{align*}
	    	\Vpr^\top \Mop^\top \Gmat\Mop x_\mu^T = \sum\limits_{q=1}^{Q_{x^T}}\theta_{x^T}^q(\mu)V_{\Mop,x^T}^q.
	    \end{align*}
	    Similarly, we define~$\tilde{m}_{T}^{q}\coloneqq \Vred^\top\E \Gmat\Mop x_T^q\in\R^{\dimRedSpaceFTA}$ for~$q=1,\dots,Q_{x^T}$ and assemble
	    \begin{align*}
	    	\Vred^\top\E \Gmat\Mop x_\mu^T = \sum\limits_{q=1}^{Q_{x^T}}\theta_{x^T}^q(\mu)\tilde{m}_{T}^{q}
	    \end{align*}
	    online.
	\end{appendices}
\end{document}